\documentclass[12pt]{amsart}
\usepackage{amssymb}
\usepackage{enumerate}
\usepackage{hyperref,hyphenat}
\usepackage[all]{xy}
\usepackage[toc,page,title,titletoc,header]{appendix}
\usepackage{xcolor}

\numberwithin{equation}{section}

\theoremstyle{plain}
\newtheorem{Thm}{Theorem}[section]
\newtheorem{Prop}[Thm]{Proposition}
\newtheorem{Lem}[Thm]{Lemma}
\newtheorem{Que}[Thm]{Question}
\newtheorem{Cor}[Thm]{Corollary}
\newtheorem{Conj}[Thm]{Conjecture}

\theoremstyle{remark}
\newtheorem{Rem}[Thm]{Remark}

\theoremstyle{definition}
\newtheorem{Def}[Thm]{Definition}
\newtheorem{Eg}[Thm]{Example}

\newcommand{\Mod}[1]{\ (\mathrm{mod}\ #1)}
\newcommand{\Rom}[1]{\uppercase\expandafter{\romannumeral #1\relax}}

\def\A{\mathbb{A}}
\def\Z{\mathbb{Z}}
\def\Q{\mathbb{Q}}
\def\R{\mathbb{R}}
\def\C{\mathbb{C}}
\def\P{\mathbb{P}}
\def\D{\mathbb{D}}
\def\G{\mathbb{G}}

\def\sL{\mathcal{L}}

\def\sA{\mathcal{A}}

\def\sM{\mathcal{M}}

\def\Pr{\mathrm{Pr}}

\def\Spec{\mathrm{Spec}}
\def\PrePer{\mathrm{PrePer}}

\def\Gal{\mathrm{Gal}}
\def\Id{\mathrm{Id}}
\def\supp{\mathrm{supp}}
\def\Berk{\mathrm{Berk}}
\def\Aut{\mathrm{Aut}}
\def\ep{\mathrm{ep}}

\newcommand{\lcm}{\operatorname{lcm}}

\def\la{\lambda}
\def\La{\Lambda}

\begin{document}
	\title[Rank-two recurrence for polynomials]{Rank-two recurrence results for polynomials and questions of dynamical Mordell--Lang type}
	
	\author{Geng-Rui Zhang}
	\address{School of Mathematical Sciences, Peking University, Beijing 100871, China}
	\email{grzhang@stu.pku.edu.cn, chibasei@163.com}
	
	\date{May 25, 2026}
	
	\subjclass[2020]{Primary 37P05; Secondary 11U09, 37P30, 37F10}
	
	\keywords{Arithmetic dynamics, Presburger arithmetic, Semi-linear sets, Measures of maximal entropy, Dynamical Mordell--Lang conjecture.}
	
	\maketitle
	
	\begin{abstract}
		Let $f,g\in\C[z]\setminus\C$ and $c\in\C[z]$. Suppose that $\deg(c)=1$ if $\deg(f)=\deg(g)=1$. Using the theory of Presburger arithmetic, we prove that the rank-two recurrence set
		\[S_{f,g,c}^2:=\left\lbrace(m,n)\in\Z_{\geq0}^2\colon \exists\la\in\C, f^{\circ m}(\la)=g^{\circ n}(\la)=c(\la)\right\rbrace\]
		is semi-linear. This is a generalization of a theorem of Yang and Zhong for the case $m=n$. We also obtain partial results on recurrence sets for rational maps in the case $m=n$. These results are related to higher-dimensional questions of dynamical Mordell--Lang type of rank $\leq2$.
	\end{abstract}
	
	\tableofcontents
	
	\section{Introduction}
	
	\subsection{A theorem of Yang and Zhong}\label{secYZ}
	For every rational map $h\in\C(z)$ and every integer $n\in\Z_{>0}$, the $n$-th iterate of $h$ is denoted by $h^{\circ n}$, and by convention we set $h^{\circ 0}(z)=z$. Recall that an arithmetic progression in $\Z_{\geq0}$ is a subset $Z$ of $\Z_{\geq0}$ of the form
	\[Z=\left\lbrace a+bk\colon k\in\Z_{\geq0}\right\rbrace\]
	for some $a,b\in\Z_{\geq0}$. If $b=0$, then $Z=\{a\}$ is a singleton. Recently, Yang and Zhong proved the following theorem \cite[Theorem~1.14]{YZ26}:
	\begin{Thm}[Yang--Zhong]\label{thmYZ}
		Let $f,g\in\C[z]\setminus\C$ and $c\in\C[z]$. Then the set
		\[S_{f,g,c}:=\left\lbrace n\in\Z_{\geq0}\colon \exists\la\in\C, f^{\circ n}(\la)=g^{\circ n}(\la)=c(\la)\right\rbrace\]
		is a finite union of arithmetic progressions.
	\end{Thm}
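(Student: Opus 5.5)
We must sketch a proof of the Yang–Zhong theorem: $S_{f,g,c}$ is a finite union of arithmetic progressions. The plan is to reduce to the rigidity of polynomials sharing measures of maximal entropy, and then to an arithmetic-progression analysis. Set $d=\deg f$ and $e=\deg g$.

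\textbf{Step 1: the generic case.} For fixed $n$, the condition $n\in S_{f,g,c}$ says that the polynomials $f^{\circ n}-c$ and $g^{\circ n}-c$ have a common root. By the resultant criterion this is an algebraic condition, so $n\notin S_{f,g,c}$ exactly when the two polynomials are coprime. When $d\neq e$, or when $d\ge2$, the degrees grow, and we compare the normalized root-counting measures
\[
\mu_n=\frac{1}{d^n}\,[f^{\circ n}=c],\qquad \nu_n=\frac{1}{e^n}\,[g^{\circ n}=c].
\]
When $d\ge 2$ these measures converge weakly to the equilibrium (maximal entropy) measure $\mu_f$; this is equidistribution of preimages, and it also holds for moving targets $c(\lambda)$ of lower growth. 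The linear case is handled directly as an affine recurrence.

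The key dichotomy is whether $J(f)=J(g)$, equivalently $\mu_f=\mu_g$.

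\begin{itemize}
\item If $\mu_f\neq\mu_g$, I would show that $S_{f,g,c}$ is finite. The idea is to use an adelic or complex height argument: a root common to $f^{\circ n}-c$ and $g^{\circ n}-c$ is a point where two dynamical Green functions are controlled simultaneously. One would invoke Baker--DeMarco type unlikely-intersection bounds over $\C$, specializing via a finitely generated field, to show that such common solutions can occur only for boundedly many $n$.
\item If $\mu_f=\mu_g$, the classification of polynomials with the same Julia set (Schmidt--Steinmetz, Baker--Eremenko, Levin--Przytycki) gives $f$ and $g$ sharing a common iterate up to symmetries of $J$. That is, $f^{\circ a}=\sigma\circ g^{\circ b}$ for some $\sigma$ in the finite symmetry group of $J$, or $f$ and $g$ are both conjugate to power maps or Chebyshev polynomials. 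When iterates agree at equal times ($m=n$), this requires $d=e$. We then get $f^{\circ k}=\sigma_k\circ g^{\circ k}$ along residue classes, with $\sigma_k$ periodic in $k$ modulo some $N$.
\end{itemize}

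\textbf{Step 2: reduction to arithmetic progressions.} Within each residue class $n\equiv r \pmod N$, the condition becomes
\[
f^{\circ n}(\lambda)=c(\lambda)\quad\text{and}\quad \sigma_r\bigl(f^{\circ n}(\lambda)\bigr)=f^{\circ n}(\lambda)
\]
(after rewriting $g^{\circ n}$ in terms of $f^{\circ n}$). So we need $\lambda$ with $f^{\circ n}(\lambda)=c(\lambda)\in\mathrm{Fix}(\sigma_r)$, a finite set of points (or all of $\C$ if $\sigma_r=\mathrm{id}$). For each point $p$ in this set, the question is whether $c(\lambda)=p$ has a solution with $f^{\circ n}(\lambda)=p$. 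That is, we ask whether the orbit of some root of $c-p$ hits $p$ at time $n$. For a fixed finite set of starting points, the set of such $n$ is a finite union of arithmetic progressions, by an elementary periodicity argument (the one-dimensional dynamical Mordell--Lang statement for points). The power and Chebyshev cases are handled by explicit computation with roots of unity, which again yields progressions.

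\textbf{Main obstacle.} The hard step is the finiteness when $\mu_f\neq\mu_g$. I would first try a degree and equidistribution argument. A common root for infinitely many $n$ produces points $\lambda_n$ whose $f$-heights and $g$-heights match the height of $c(\lambda_n)$. Passing to a limit of Galois-orbit measures, these would equidistribute to both $\mu_f$ and $\mu_g$, which is a contradiction. The difficulty is that a single common root does not give Galois-orbit equidistribution. So one must instead count: show that the gcd of $f^{\circ n}-c$ and $g^{\circ n}-c$ has degree either $0$ or growing proportionally to $d^n$, and then apply equidistribution to the gcd.
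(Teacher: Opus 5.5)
Your proposal has two genuine gaps, plus some smaller omissions.

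\textbf{The dichotomy is false as stated.} The condition $\mu_f\neq\mu_g$ does not make $S_{f,g,c}$ finite. Take $f(z)=z^2$, $g(z)=z^2-2z$ and $c\equiv0$. Here $g$ is conjugate to $T_2$, so $J(g)=[-1,3]\neq\partial\D=J(f)$. Yet $\la=0$ gives $S_{f,g,c}=\Z_{\geq0}$. The same example defeats your proposed fix: since $(g^{\circ n})'(0)=(-2)^n\neq0$, we have $\gcd(f^{\circ n}-c,g^{\circ n}-c)=z$ for every $n$, so the gcd neither has degree $0$ nor grows like $d^n$.

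What heights control is the set $Q$ of \emph{solutions} $\la$, not the set of $n$ (after discarding the at most one $n$ with $f^{\circ n}=c$ and the at most one with $g^{\circ n}=c$). The argument the paper adapts from \cite{YZ26} splits on $\#Q$:
\begin{itemize}
\item If $Q$ is finite, $S_{f,g,c}$ is the exceptional part together with the sets $\{n\colon f^{\circ n}(\la)=c(\la)\}\cap\{n\colon g^{\circ n}(\la)=c(\la)\}$ for $\la\in Q$. Each of these is an arithmetic progression (cf.\ Proposition~\ref{Q2polyfinite}).
\item If $Q$ is infinite, distinct solutions $\la_j$ have $n_j\to\infty$ and $\hat h_f(\la_j)=O(1)/(d^{n_j}-\deg c)\to0$, and likewise for $g$.
\end{itemize}
Your stated obstacle does not arise. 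When $f^{\circ n}\neq g^{\circ n}$, each solution is algebraic over the field of definition $K$, and its $K$-conjugates are again solutions with the same canonical heights. So one gets $\hat h_f=\hat h_g$; the paper uses \cite[Proposition~2.5]{NZ25}, with extra care in the isotrivial function-field case. Hence $\PrePer(f,\C)=\PrePer(g,\C)$ and $\mu_f=\mu_g$.

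\textbf{The case $\mu_f=\mu_g$ with $d\neq e$ is missing.} You discard $d\neq e$ in this branch, but the case genuinely occurs. For example, $f=h^{\circ 2}$ and $g=h$ share $\mu_h$, and if $h(0)=0$ and $c\equiv0$, every $\la\in h^{-n}(0)$ is a solution. So $Q$ is infinite, while $g^{\circ n}$ is not of the form $\sigma_n\circ f^{\circ n}$, and your Step~2 does not apply.

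The missing idea is Proposition~\ref{poly3cases}, after \cite[Proposition~4.6]{YZ26}. Write $f=\zeta_1h^{\circ k_1}$ and $g=\zeta_2h^{\circ k_2}$ with $k_1>k_2$. For a solution $\la$, the point $y=h^{\circ k_2n}(\la)$ satisfies $y=\theta\,h^{\circ(k_1-k_2)n}(y)$ for some $\theta\in U_s(\C)$. Hence $y$, and so $\la$, is preperiodic. The graph of $c$ therefore contains infinitely many preperiodic points of $(h,h)$. Dynamical Manin--Mumford for split polynomial maps \cite{GNY19} (the torsion point theorem in the power and Chebyshev cases) then forces $c$ to be a preperiodic constant or, after conjugation, one of $\zeta_3h^{\circ k_3}$, $\eta z^{t}$, $\pm T_t$. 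Only then does an explicit congruence computation (e.g.\ \cite[Lemma~4.1]{YZ26}) produce progressions.

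For $d=e$, your Step~2 is essentially right, provided the classification is used in the form $f=\zeta_1h^{\circ k}$, $g=\zeta_2h^{\circ k}$. Then $g^{\circ n}=\omega_nf^{\circ n}$ with $\omega_n\in U_s(\C)$ eventually periodic.

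\textbf{The degree-one cases are not covered.}
\begin{itemize}
\item For $\deg f\geq2$ and $\deg g=1$ there is no $\mu_g$ to compare, so your dichotomy does not apply.
\item For $\deg f=\deg g=1$ with $\deg c\geq2$, the condition becomes the vanishing of an exponential polynomial in $n$. This needs a Skolem--Mahler--Lech type input, not just ``an affine recurrence''.
\end{itemize}
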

	Here we adopt the convention that the union over an empty index set is empty. The above theorem of Yang and Zhong is motivated by a question of Xie (Question~\ref{Qxie}); see \cite[\S~1.3]{YZ26}.
	
	If we switch the roles of $n$ and $\la$ in the definition of $S_{f,g,c}$, it is also interesting and natural to study the set
	\[Q_{f,g,c}:=\left\lbrace \la\in\C\colon \exists n\in\Z_{\geq0}, f^{\circ n}(\la)=g^{\circ n}(\la)=c(\la)\right\rbrace.\]
	We remark that this set $Q_{f,g,c}$ and its variants have been widely studied in the literature; see, for example, \cite{HT17,NZ25,YZ26}. Results concerning the finiteness of $Q_{f,g,c}$ are dynamical analogues of a theorem of Bugeaud--Corvaja--Zannier \cite[Theorem~1]{BCGgcd} on greatest common divisors. See \cite[\S~1.1]{YZ26} for more information about the historical background.
	
	\subsection{Our generalization to rank two}\label{secrk2}
	The aim of this article is to generalize Theorem~\ref{thmYZ}. There are several possible directions for generalizing Theorem~\ref{thmYZ}.
	
	First, it seems that only dealing with $f^{\circ n}$ and $g^{\circ n}$ for the same $n\in\Z_{\geq0}$ is a considerable restriction. In fact, the proof of Theorem~\ref{thmYZ} in \cite{YZ26} reflects this problem to some extent. In \cite{YZ26}, the proof of Theorem~\ref{thmYZ} (when $\deg(f)\geq2$ and $\deg(g)\geq2$) is divided into two cases:
	\begin{enumerate}
		\item \label{degsame} $\deg(f)=\deg(g)$;
		\item \label{degdiff} $\deg(f)\neq\deg(g)$.
	\end{enumerate}
	We want to generalize Theorem~\ref{thmYZ} and obtain a more unified statement for these cases (\ref{degsame}) and (\ref{degdiff}), by studying rank-two recurrence sets defined as follows.
	\begin{Def}
		Let $f,g\in\C[z]\setminus\C$ and $c\in\C[z]$. Define the \emph{rank-two recurrence set of $(f,g,c)$} as
		\[S_{f,g,c}^2:=\left\lbrace(m,n)\in\Z_{\geq0}^2\colon \exists\la\in\C, f^{\circ m}(\la)=g^{\circ n}(\la)=c(\la)\right\rbrace.\]
	\end{Def}
	The following notions of (semi-)linear sets are the higher-rank analogues of (finite unions of) arithmetic progressions:
	\begin{Def}\label{semilinear}
		Let $k\geq1$ be an integer.
		
		(1) A subset $Z$ of $\Z_{\geq0}^k$ is called \emph{linear} if it is of the form
		\[Z=L(a,P):=\left\lbrace a+\sum_{j=1}^r b_j\colon r\in\Z_{\geq0}\text{ and }b_j\in P\text{ for }1\leq j\leq r\right\rbrace\]
		for some $a\in\Z_{\geq0}^k$ and some finite subset $P\subseteq\Z_{\geq0}^k$. We call $P$ a \emph{set of generators} of $L(a,P)$, and $a$ the \emph{initial vector} of $L(a,P)$.
		
		(2) A subset $Z$ of $\Z_{\geq0}^k$ is called \emph{semi-linear} if it is a finite union of linear subsets of $\Z_{\geq0}^k$.
	\end{Def}
	\begin{Rem}
		(i) Note that $L(a,P)$ is precisely a translate by $a$ of the submonoid of $(\Z_{\geq0}^k,+,0)$ generated by $P$.
		
		(ii) Since any $k+1$ vectors in $\Z_{\geq0}^k$ are linearly dependent over $\Q$, it is easy to see that every linear subset of $\Z_{\geq0}^k$ is a finite union of linear subsets with $\leq k$ generators, and therefore the same holds for semi-linear subsets.
		
		(iii) For the rank-one case ($k=1$), it is easy to see that for all $Z\subseteq\Z_{\geq0}$, $Z$ is semi-linear if and only if $Z$ is a finite union of arithmetic progressions.
	\end{Rem}
	
	\smallskip
	
	Our main result is the following theorem, which is a rank-two generalization of Theorem~\ref{thmYZ}:
	\begin{Thm}\label{thmrk2poly}
		Let $f,g\in\C[z]\setminus\C$ be such that $\deg(f)\geq2$ or $\deg(g)\geq2$, and let $c\in\C[z]$. Then the rank-two recurrence set
		\[S_{f,g,c}^2=\left\lbrace(m,n)\in\Z_{\geq0}^2\colon \exists\la\in\C, f^{\circ m}(\la)=g^{\circ n}(\la)=c(\la)\right\rbrace\]
		is semi-linear.
	\end{Thm}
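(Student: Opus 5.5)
We plan to reduce the statement to the classical fact that a subset of $\Z_{\geq0}^2$ definable in Presburger arithmetic $(\Z_{\geq0},+,<,0,1)$ is semi-linear (Ginsburg--Spanier). The aim is therefore to describe $S_{f,g,c}^2$ by a first-order formula in $m,n$ using only addition, order and congruences. We will treat $\la$ as a common root of $f^{\circ m}-c$ and $g^{\circ n}-c$. The first step is to dispose of degenerate configurations: if $c$ is constant, or if $\deg(c)$ is comparable with $\deg(f)^m$ or $\deg(g)^n$ only for finitely many small $m,n$, those finitely many rows and columns will be handled separately. A single row $\{m_0\}\times\Z_{\geq0}$ intersected with $S^2$ is the set of $n$ for which the finite set $\{\la\colon f^{\circ m_0}(\la)=c(\la)\}$ meets $\{g^{\circ n}=c\}$. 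For each of the finitely many $\la$ this is a rank-one recurrence condition $g^{\circ n}(\la)=c(\la)$, which is a finite union of arithmetic progressions by a Skolem--Mahler--Lech-type argument for a single orbit (for polynomial $g$ of degree $\geq2$ one uses that an orbit hitting a fixed value infinitely often is preperiodic). Such a set is semi-linear.

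For the generic region, where $m,n$ are large, we will use the structure of common zeros via equidistribution and the dynamics at infinity. Suppose that $f^{\circ m}-c$ and $g^{\circ n}-c$ share roots for infinitely many $(m,n)$ along a sequence. Then, following Yang--Zhong, the measures of maximal entropy, or the Böttcher coordinates at infinity, force a relation between $f$ and $g$: they have a common iterate up to symmetries of the Julia set, namely $f^{\circ p}=\sigma\circ g^{\circ q}$ with $\sigma$ linear preserving the Julia set. Otherwise we get a height argument: the common roots $\la$ would have $\hat h_f$ and $\hat h_g$ controlled by $\deg(c)$, and a Northcott/Bogomolov-type finiteness bounds the possible $\la$. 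In the non-related case, $S^2$ outside a finite strip is then determined by finitely many points $\la$. For each $\la$ the set $\{(m,n)\colon f^{\circ m}(\la)=c(\la)=g^{\circ n}(\la)\}$ is a product of two rank-one semi-linear sets, hence semi-linear, and finite unions preserve semi-linearity.

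In the related case $f^{\circ p}=\sigma\circ g^{\circ q}$ (when one degree is $1$, the other map is handled by the linear-conjugacy description), we will write $m=pm'+r$ and $n=qn'+s$ with finitely many residues $r,s$. The condition becomes a relation of the form $f^{\circ r}\circ(\sigma g^{\circ q})^{\circ m'}(\la)=g^{\circ s+qn'}(\la)$, and $\sigma$ has finite order commuting appropriately. Up to finitely many congruence classes, existence of $\la$ then depends only on the difference $m'-n'$ and on its sign, together with bounded exceptional data. This condition is expressible by a Presburger formula involving $m'-n'\in$ (a semi-linear set from the rank-one theorem, Theorem~\ref{thmYZ}, applied to auxiliary triples) and congruences. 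Closure of Presburger-definable sets under Boolean operations and projection then yields semi-linearity.

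The main obstacle is the dichotomy in the second step. We need to show that infinitely many shared roots with $m,n$ independently large force an algebraic relation between iterates of $f$ and $g$, uniformly in both indices rather than along the diagonal as in Yang--Zhong. We would first try comparing equilibrium measures: the root distributions of $f^{\circ m}-c$ converge to $\mu_f$, and those of $g^{\circ n}-c$ converge to $\mu_g$. A positive proportion of shared roots would force $\mu_f=\mu_g$, and then Baker--Eremenko/Levin's classification gives the common-iterate relation. When only few roots are shared, we would instead use a gcd-type degree bound to place them in a finite set.
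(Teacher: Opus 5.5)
There is a genuine gap in your central dichotomy. You pass from ``infinitely many shared roots'' (equivalently $\mu_f=\mu_g$) to a relation $f^{\circ p}=\sigma\circ g^{\circ q}$ with $\sigma$ linear. The Schmidt--Steinmetz/Baker--Eremenko classification (Theorem~\ref{polysamemu}) gives this only for \emph{non-exceptional} polynomials.

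Take $f(z)=z^2$, $g(z)=z^3$, $c(z)=z$. For every prime $\ell\geq5$, each primitive $\ell$-th root of unity $\lambda$ satisfies $f^{\circ m}(\lambda)=g^{\circ n}(\lambda)=c(\lambda)$ with $m=\mathrm{ord}_\ell(2)$ and $n=\mathrm{ord}_\ell(3)$. So there are infinitely many shared roots, yet $2^p\neq 3^q$ rules out any relation between iterates. Your ``non-related'' branch would then wrongly assert that only finitely many $\lambda$ occur.

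The whole exceptional case is therefore missing: power maps and $\pm T_d$ sharing a Julia set, with possibly multiplicatively independent degrees. This is where much of the paper's work lies. Proposition~\ref{poly3cases} uses the torsion points theorem on $\mathbb{G}_m^2$ to show that $c$ is a constant in the common preperiodic set, $\eta z^{d_3}$ with $\eta$ a root of unity, or $\pm T_t$. Propositions~\ref{powerconst}, \ref{powerpm1not0} and \ref{Cheby} then decide membership through the divisibility criterion of Lemma~\ref{lem4.1}, which involves $\gcd(d_1^m,d_2^n)$. They do this by cutting $\mathbb{Z}_{\geq0}^2$ into cones according to $p$-adic valuations, and by a $2$-adic analysis. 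These sets are not governed by a single difference $m'-n'$. Proposition~\ref{powertil} shows that a closely related set is not even semi-linear, so a soft argument cannot replace this arithmetic.

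Even in the non-exceptional case, your claim that membership ``depends only on $m'-n'$ and its sign'' is not justified for an arbitrary $c$. What is needed is the paper's second dichotomy, on the set $R_{f,g,c}$ of $\lambda$ arising with $d_1^m\neq d_2^n$ and $f^{\circ m},g^{\circ n}\neq c$.
\begin{itemize}
\item If $R_{f,g,c}$ is finite, $S^2_{f,g,c}$ is a finite union of product sets, plus special rows and columns, plus the ray $d_1^m=d_2^n$. That ray is handled by the rank-one theorem for $f^{\circ q},g^{\circ p}$ (Proposition~\ref{Rfinite}).
\item If $R_{f,g,c}$ is infinite, all these $\lambda$ are $h$-preperiodic, so the graph of $c$ contains infinitely many $(h,h)$-preperiodic points. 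The Ghioca--Nguyen--Ye dynamical Manin--Mumford theorem then forces $c$ to be a preperiodic constant or $\zeta_3h^{\circ k_3}$. Only then can the explicit computations of Cases~1.1--1.2.2 be carried out.
\end{itemize}

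Three further points need correcting.
\begin{itemize}
\item Your route to $\mu_f=\mu_g$ is not the right mechanism. Equidistribution with a ``positive proportion'' of shared roots does not work, because a degree bound for each fixed $(m,n)$ does not bound the union over all $(m,n)$. The working argument is the estimate $\hat h_f(\lambda_j)=O(1)/(d_1^{m_j}-\deg c)\to 0$ along distinct shared roots, with separate care in the isotrivial function-field case.
\item Northcott finiteness is not available over $\mathbb{C}$, so it cannot bound the possible $\lambda$.
\item The case $\deg g=1$ needs a genuine finiteness input (the paper uses Hsia--Tucker), not a ``linear-conjugacy description''.
\end{itemize}
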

	For polynomial triples $(f,g,c)$ satisfying $\deg(f)\deg(g)\geq2$, Theorem~\ref{thmrk2poly} immediately implies that $S_{f,g,c}$ is a finite union of arithmetic progressions; see Corollary~\ref{rk2tork1}. However, the proof of Theorem~\ref{thmrk2poly} relies on a special case of Theorem~\ref{thmYZ}, together with techniques from the proof of Theorem~\ref{thmYZ} in \cite{YZ26}.
	
	The case of Theorem~\ref{thmrk2poly} where exactly one of $f$ and $g$ has degree one is easy. For the case where $\deg(f)\geq2$ and $\deg(g)\geq2$, arguing as in \cite[\S~4]{YZ26}, we show that $f$ and $g$ have the same measure of maximal entropy under certain conditions (see \S~\ref{secsamemupoly}), and then apply a well-known classification \cite{SS95} of such pairs $(f,g)$. The characterization of semi-linear sets using Presburger arithmetic on $\Z_{\geq0}$ (see Theorem~\ref{thmGS}), the torsion points theorem on $\G_{m,\C}^2$, and explicit descriptions of periodic curves under split polynomial endomorphisms \cite{GNY19} are also important for the proof of Theorem~\ref{thmrk2poly}.
	
	\subsubsection*{The case where $\deg(f)=\deg(g)=1$}
	When $\deg(f)=\deg(g)=1$, there exist examples for which $S_{f,g,c}^2$ is not semi-linear, which is quite different from the rank-one case (see Theorem~\ref{thmYZ}). Thus, we assume that
	\[\deg(f)\geq2\quad\text{or}\quad\deg(g)\geq2\]
	in our Theorem~\ref{thmrk2poly}. The following examples illustrate this phenomenon.
	\begin{Eg}\label{deg1counter}
		(1) Let
		\[f(z)=\frac{1}{2} z,\quad g(z)=z-1,\quad\text{and}\quad c(z)=1.\]
		We compute that
		\[S_{f,g,c}^2=\left\lbrace (m,2^m-1)\colon m\in\Z_{\geq0}\right\rbrace.\]
		Suppose that $S_{f,g,c}^2$ is semi-linear. It follows from Corollary~\ref{semiclosed} that
		\[\{2^{m}-1\colon m\in\Z_{>0}\}\]
		is semi-linear, which is impossible. Thus, $S_{f,g,c}^2$ is not semi-linear.
		
		(2) Let $k$ be an arbitrary integer $\geq2$. Define
		\[f(z)=2z,\quad g(z)=z+1,\quad\text{and}\quad c(z)=z^k.\]
		A direct computation shows that
		\[S_{f,g,c}^2=\left(\Z_{\geq0}\times\{0\}\right)\bigsqcup\left\lbrace (r(k-1),2^{rk}-2^r)\colon r\in\Z_{>0}\right\rbrace.\]
		If $S_{f,g,c}^2$ is semi-linear, then Corollary~\ref{semiclosed} shows that
		\[\{2^{rk}-2^r\colon r\in\Z_{>0}\}\]
		is also semi-linear, which is a contradiction. Hence $S_{f,g,c}^2$ is not semi-linear.
	\end{Eg}
	
	It is interesting to further classify all polynomial triples $(f,g,c)$ with
	\[\deg(f)=\deg(g)=1\]
	such that $S_{f,g,c}^2$ is not semi-linear. We are able to prove the following result:
	\begin{Thm}\label{deg111}
		Let $f,g,c\in\C[z]$ be polynomials of degree $1$. Then $S_{f,g,c}^2$ is semi-linear.
	\end{Thm}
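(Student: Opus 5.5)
Write $f(z)=az+b$, $g(z)=\alpha z+\beta$, $c(z)=\gamma z+\delta$ with $a,\alpha,\gamma\neq0$. Then $f^{\circ m}(z)=a^m z+b_m$ and $g^{\circ n}(z)=\alpha^n z+\beta_n$, where $b_m=b(1+a+\dots+a^{m-1})$ and $\beta_n=\beta(1+\alpha+\dots+\alpha^{n-1})$. The condition $(m,n)\in S^2_{f,g,c}$ says that the two linear equations
\[(a^m-\gamma)\la=\delta-b_m,\qquad (\alpha^n-\gamma)\la=\delta-\beta_n\]
have a common solution $\la$. I would split according to whether $a^m=\gamma$ and whether $\alpha^n=\gamma$.

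If both $a^m\neq\gamma$ and $\alpha^n\neq\gamma$, the common solution exists iff $(\delta-b_m)(\alpha^n-\gamma)=(\delta-\beta_n)(a^m-\gamma)$. If $a^m=\gamma$, we need $b_m=\delta$, together with either $\alpha^n\neq\gamma$ or $\beta_n=\delta$. The symmetric statement holds when $\alpha^n=\gamma$. The sets $A=\{m\colon a^m=\gamma,\ b_m=\delta\}$ and $B=\{n\colon \alpha^n=\gamma,\ \beta_n=\delta\}$ are rank-one sets of the form treated by Theorem~\ref{thmYZ}; in fact they come directly from linear recurrences. So each is a finite union of arithmetic progressions, and likewise $\{n\colon\alpha^n=\gamma\}$ is either empty, a singleton, or an arithmetic progression. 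Hence the degenerate pieces are products of semi-linear subsets of $\Z_{\geq0}$ (intersected with complements, which are still semi-linear by Corollary~\ref{semiclosed}-type closure), and are therefore semi-linear.

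The main case is the exponential-polynomial equation in the two variables $m,n$. When $a,\alpha\neq1$, we can write $b_m=\frac{b}{1-a}(1-a^m)$, so $\delta-b_m=u+va^m$ with $v=b/(a-1)$ and $u=\delta-v$; similarly $\delta-\beta_n=u'+v'\alpha^n$. (If $a=1$, then $b_m=bm$, a polynomial term, and similarly for $\alpha$.) The equation then becomes
\[(u+va^m)(\alpha^n-\gamma)=(u'+v'\alpha^n)(a^m-\gamma),\]
that is, a linear relation among the monomials $1,a^m,\alpha^n,a^m\alpha^n$ (possibly with polynomial coefficients in $m$, $n$ if $a=1$ or $\alpha=1$). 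Here the key point is that $\deg c=1$: the variable $\la$ appears linearly, so no term like $2^{rk}$ from Example~\ref{deg1counter}(2) arises. I would then apply the Skolem--Mahler--Lech / Laurent theorem (the $S$-unit equation in the group $\langle a,\alpha\rangle\subseteq\C^*$) to describe the solutions. Those come from vanishing subsums. Each vanishing-subsum pattern forces relations like $a^m=\text{const}$, $\alpha^n=\text{const}$, or $a^m\alpha^n=\text{const}$, or $a^{m}=\alpha^{n}\cdot\text{const}$. Each such relation cuts out a translate of a subgroup intersected with $\Z_{\geq0}^2$, which is semi-linear. The non-degenerate solutions are finite in number.

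The main obstacle will be the cases where $a=1$ or $\alpha=1$, where linear terms in $m,n$ appear alongside exponentials. Example~\ref{deg1counter}(1) shows that $n=2^m-1$ can occur with $\deg c=0$. With $\deg c=1$, I expect the extra $\gamma$-terms to force either coefficient vanishing (which reduces to polynomial identities in $m,n$ that cut out lines, hence semi-linear sets) or finitely many solutions. I would verify this case by case. First, $a=\alpha=1$ gives the equation $(\delta-bm)(1-\gamma)=(\delta-\beta n)(1-\gamma)$ when $\gamma\neq1$, which is a linear equation in $(m,n)$ and hence semi-linear; when $\gamma=1$, the set is degenerate and handled as above. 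Second, for the mixed case $a\neq1=\alpha$, the equation reads $(u+va^m)(1-\gamma)=(\delta-\beta n)(a^m-\gamma)$. If $a$ is not a root of unity and $\beta(1-\gamma)\ne0$, then comparing growth (or an exponential-polynomial unit argument) should show that only finitely many $m$, or a lattice-periodic family, can occur. I expect this step to be the delicate one.
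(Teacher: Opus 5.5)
Your treatment of the cases $a=\alpha=1$ and $a,\alpha\neq1$ is sound and is essentially what the paper does. For $a,\alpha\neq1$, the paper applies Laurent's theorem to the curve in $\G_m^2$ cut out by your relation among $1,a^m,\alpha^n,a^m\alpha^n$, with the group generated by $(a,1)$ and $(1,\alpha)$. This is equivalent to your vanishing-subsum analysis. Read ``finitely many non-degenerate solutions'' as finitely many values of $(a^m,\alpha^n)$; each gives a coset of a subgroup of $\Z^2$, which is still semi-linear. The genuine gap is the mixed case $a\neq1=\alpha$, which you leave at ``should show''.

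When $\beta(1-\gamma)\neq0$, your equation solves to $n=R(a^m)$ for a M\"obius map $R$ whose only possible pole is $\gamma\neq0$; if $R$ is constant the set is trivially semi-linear. Comparing growth does settle $|a|\neq1$: $R(a^m)$ tends to the finite limit $R(0)$ or $R(\infty)$, which it never attains. It also settles $|a|=1\neq|\gamma|$: the values are bounded, $R$ is injective, and the $a^m$ are pairwise distinct. It gives nothing when $|a|=|\gamma|=1$ and $a$ is not a root of unity. Then $a^m$ comes arbitrarily close to the pole, $R(a^m)$ is unbounded, and the question is a genuine Diophantine one. For instance, take $f=az$, $g=z+1$, $c=-z+i$ with $a=e^{2\pi i\theta}$, $\theta\notin\Q$. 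Then $(m,n)\in S^2_{f,g,c}$ if and only if $n=-\tan(\pi m\theta)$, so you must rule out $\tan(\pi m\theta)$ being an integer for infinitely many $m$.

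The $S$-unit equation you use for $a,\alpha\neq1$ does not apply here, because $n$ enters as an integer rather than as an element of a finitely generated multiplicative group. The missing ingredient is the paper's Proposition~\ref{infUCg}: if $|a|=1$ and $R(a^m)\in\Z$ for infinitely many $m$, then $a$ is a root of unity. Its proof runs as follows.
\begin{enumerate}
\item Laurent's theorem, applied to curves $R(x)=\tau(R)(y)$ for suitable field automorphisms $\tau$, shows that $a$ is algebraic with all Galois conjugates on the unit circle.
\item Kronecker's theorem and the product formula then give a $p$-adic place with $0<|a|_v<1$. This forces the integers $R(a^{m_k})$ to be exponentially large in $m_k$, hence $a^{m_k}$ to be exponentially close to the pole.
\item Baker's theorem on linear forms in logarithms gives a polynomial lower bound $|a^{m_k}-\gamma|\geq m_k^{-\delta}$, a contradiction.
\end{enumerate}
Alternatively, Siegel's theorem on integral points of $\P^1\setminus\{0,\infty,\gamma\}$ over a finitely generated ring gives the finiteness directly. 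With this, $\{m\colon R(a^m)\in\Z_{\geq0}\}$ is finite unless $a$ is a root of unity, in which case it is periodic, and the mixed case closes.
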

	The proof of Theorem~\ref{deg111} relies on the Mordell--Lang conjecture on $\G_{m,\C}^2$ proved by Laurent \cite{Laurent}, and a special case of the following proposition:
	\begin{Prop}[= Proposition~\ref{infUCg}]\label{infUCintro}
		Let $R\in\C(z)\setminus\C$ be a non-constant rational map, and let
		\[\eta\in\partial\D=\left\lbrace z\in\C\colon\left|z\right|=1\right\rbrace.\]
		Suppose that there exists a complex number $\gamma\in\C$ such that
		\[\#\left\lbrace n\in\Z\colon R(\eta^n)\in\gamma\cdot\Z\right\rbrace=\infty.\]
		Then $\eta$ is a root of unity.
	\end{Prop}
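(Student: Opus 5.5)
We argue by contradiction: assume $\eta\in\partial\D$ is not a root of unity, so $\{\eta^n\}$ is dense in $\partial\D$. Let $N\subseteq\Z$ be the infinite set of $n$ with $R(\eta^n)\in\gamma\cdot\Z$. If $\gamma=0$, then $R(\eta^n)=0$ for infinitely many distinct points $\eta^n$ (distinct because $\eta$ is not a root of unity). This is impossible since $R$ is non-constant and has finitely many zeros. So assume $\gamma\neq0$; replacing $R$ by $R/\gamma$, we may take $\gamma=1$, so that $R(\eta^n)\in\Z$ for all $n\in N$.

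The first step is to localize. The points $\eta^n$, $n\in N$, are pairwise distinct and lie on the compact set $\partial\D$, so they accumulate at some $w_0\in\partial\D$. If $R$ has a pole on $\partial\D$, the values $R(\eta^n)$ may be unbounded, and this case needs separate treatment (see below). Suppose first that $w_0$ is not a pole. Then the integers $R(\eta^n)$ are bounded for $n$ near $w_0$, so some fixed integer $k$ is attained infinitely often. The equation $R(z)=k$ then has infinitely many solutions, contradicting the non-constancy of $R$.

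The second step handles accumulation only at poles on $\partial\D$. Split $N$ into the finitely many subsets according to which pole $p\in\partial\D$ of $R$ the points approach, and pass to an infinite subsequence with $\eta^{n_j}\to p$ and $|R(\eta^{n_j})|\to\infty$. Near $p$ we have the local expansion $R(z)=a(z-p)^{-e}(1+O(z-p))$ with $e\geq1$ and $a\neq0$. The condition $R(\eta^{n_j})\in\Z$ forces the argument of $a(\eta^{n_j}-p)^{-e}$ to tend to $0$ or $\pi$. Since $\eta^{n_j}$ lies on the circle, $\eta^{n_j}-p$ approaches $p$ tangentially, so its argument tends to $\arg(p)\pm\pi/2$. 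This is not yet a contradiction, because the argument condition can be compatible with the tangent direction. I would therefore use integrality more strongly. Consecutive integer values differ by at least $1$, so I would compare the integer sequence $R(\eta^{n})$ along the map $n\mapsto n+d$ for a suitable fixed $d$ with $\eta^{d}$ close to $1$ (which exists by density). The difference $R(\eta^{n+d})-R(\eta^n)$ is controlled by $|\eta^d-1|\cdot|R'|$. However, $n+d$ need not lie in $N$, so this argument does not close as stated.

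A cleaner route for the second step is to use the Galois/algebraic structure. Set $t=\eta^{n}$. The equation $R(t)=k$ with $k\in\Z$ and $|t|=1$ implies that $\bar t=1/t$ satisfies $\bar R(1/t)=k$. Consequently $t$ is a common root of $R(z)-k$ and $\bar R(1/z)-k$, where $\bar R$ denotes $R$ with conjugated coefficients. If these two functions are not identically related, then eliminating $k$ gives a nontrivial algebraic relation that $t$ must satisfy independently of $k$. This would leave only finitely many possible $t$, contradicting the infinitude of $N$. The remaining case is the identity $\bar R(1/z)=R(z)$, i.e. $R$ is real on $\partial\D$. In that case we set $x=R(\eta^n)\in\Z$. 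The real-valued function $\theta\mapsto R(e^{i\theta})$ is real-analytic away from poles and, near a pole, behaves like $c(\theta-\theta_0)^{-e}$. The key estimate is then that the number of $n\leq M$ with $\eta^n$ within $\delta$ of $p$ and $R(\eta^n)\in\Z$ is small. I expect this estimate to be the main obstacle. To attack it, I would try a Diophantine/equidistribution count, comparing the density of integer level sets near $p$ with the equidistribution of $\{n\theta\}$; alternatively, I would look for an algebraic argument using the fact that the $\eta^n$ all lie in the finitely generated group $\eta^\Z$, combined with a Laurent/Mordell–Lang-type statement on $\G_m$ applied to the curve $R(z)=k$.
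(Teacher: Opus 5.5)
Your preliminary reductions are sound: the case $\gamma=0$, the normalization $\gamma=1$, and the argument that a non-polar accumulation point forces some integer value to be taken infinitely often. So is your observation that $R(t)\in\R$ for $t\in\partial\D$ means $R(t)=\bar R(1/t)$, so infinitely many hits force $R(z)=\bar R(1/z)$ identically.

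\textbf{The gap.} The case that survives is $R$ real on $\partial\D$, with the $\eta^{n}$ accumulating only at a pole $\alpha\in\partial\D$ and $|R(\eta^n)|\to\infty$. That case is the entire content of the proposition, and you leave it open. The tools you propose cannot close it.
\begin{enumerate}
\item Equidistribution of $(\eta^n)$ only controls visits to sets of positive measure. The set $\{t\in\partial\D\colon R(t)\in\Z\}$ is countable, and any bound saying the hits are sparse is compatible with there being infinitely many of them.
\item Laurent's theorem on $\G_m$ applied to $R(z)=k$ is vacuous: for fixed $k$ this is a finite set, and the difficulty is uniformity in $k$. Nor can you apply it to the graph of $R$, since the integer values do not lie in a finitely generated subgroup of $\C^\times$.
\end{enumerate}
Genuine arithmetic input is unavoidable. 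Take $R(z)=i(z+1)/(z-1)$, which satisfies $R(z)=\bar R(1/z)$ and $R(e^{i\theta})=\cot(\theta/2)$. Then $R(\eta^n)=k$ means $\eta^n=(k+i)/(k-i)$. For $\eta=(2+i)/(2-i)$ this says that $(2+i)^n$ has imaginary part $\pm1$. That is an exponential Diophantine problem, decided by arithmetic and not by how $(\eta^n)$ is distributed on the circle.

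\textbf{What the paper does instead.} It supplies this input in two stages.
\begin{enumerate}
\item \emph{Algebraicity.} Each $m_k=R(\eta^{n_k})\in\Z$ is fixed by every automorphism $\tau$ of $\C$. Hence the points $(\eta^{n_k},\tau(\eta)^{n_k})$ lie both on the curve $R(x)=\tau(R)(y)$ in $\G_m^2$ and in the cyclic group generated by $(\eta,\tau(\eta))$. Laurent's theorem plus a degree count gives a component $y=cx^{\pm1}$, hence $\tau(\eta)^{n_2-n_1}=\eta^{\pm(n_2-n_1)}$.
\begin{itemize}
\item Choosing $\tau$ with $\tau(\eta)$ algebraically independent of $\eta$ gives $\eta\in\overline{\Q}$, and then $R\in\overline{\Q}(z)$.
\item Taking $\tau\in\Gal(\overline{\Q}/\Q)$ shows every conjugate of $\eta$ lies on $\partial\D$.
\end{itemize}
Your elimination is exactly the case where $\tau$ is complex conjugation. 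It gives nothing in the surviving case, because then $\tau(\eta)=\eta^{-1}$ and the coset $xy=1$ lies on the curve.
\item \emph{Size comparison.} By Kronecker's theorem $\eta$ is not an algebraic integer, so some finite place $v$ has $|\eta|_v<1$. Then $\eta^{n_k}\to0$ $v$-adically. Also $0$ cannot be a pole of $R$, since otherwise $|m_k|_v\to\infty$. So $m_k\to R(0)$ $v$-adically at an exponential rate. Taking norms and using the product formula, $|m_k|$ grows exponentially in $n_k$. Since $|m_k|\asymp|\eta^{n_k}-\alpha|^{-\kappa}$ near the pole, this gives $|\eta^{n_k}-\alpha|\le C^{n_k}$ with $C<1$. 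That contradicts Baker's lower bound $|\eta^{n_k}-\alpha|\ge n_k^{-\delta}$.
\end{enumerate}
Alternatively, as the paper remarks, Siegel's theorem on integral points finishes at once. Apply it to $y=R(x)$ over a finitely generated ring containing $\eta^{\pm1}$ and the coefficients of $R$; this curve is $\P^1$ minus at least three points, since $\alpha\neq0,\infty$.
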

	
	Proposition~\ref{infUCintro} may be viewed as a rigidity statement for integral values of rational maps along multiplicative subgroups of the unit circle.
	
	\subsubsection*{Modified recurrence sets for power maps}
	Let 
	\[f(z)=\zeta_1 z^{d_1},\quad g(z)=\zeta_2 z^{d_2},\quad\text{and}\quad c(z)=\zeta_3 z^{d_3},\]
	where $d_1,d_2\geq2$ and $d_3\geq1$ are integers, and $\zeta_1,\zeta_2,\zeta_3$ are roots of unity. Observe that $f(0)=g(0)=c(0)=0$. The choice $\la=0$ shows that
	\[S_{f,g,c}=\Z_{\geq0}\quad\text{and}\quad S_{f,g,c}^2=\Z_{\geq0}^2,\]
	which are semi-linear. After excluding the trivial solution $\la=0$, Yang and Zhong \cite[Corollary~4.4]{YZ26} proved that the \emph{modified rank-one recurrence set}
	\[\tilde{S}_{f,g,c}:=\left\lbrace n\in\Z_{\geq0}\colon \exists\la\in\C^\times,\,f^{\circ n}(\la)=g^{\circ n}(\la)=c(\la)\right\rbrace\]
	is also semi-linear. In fact, they proved a slightly stronger version. However, the rank-two case behaves very differently, as shown by the following proposition:
	\begin{Prop}\label{powertil}
		Let $r,s\geq3$ be odd integers. Define
		\[f(z)=z^r,\quad g(z)=z^s,\quad\text{and}\quad c(z)=-z.\]
		Define the \emph{modified rank-two recurrence set} to be
		\[\tilde{S}_{f,g,c}^2:=\left\lbrace (m,n)\in\Z_{\geq0}^2\colon \exists\la\in\C^\times,\,f^{\circ m}(\la)=g^{\circ n}(\la)=c(\la)\right\rbrace.\]
		Then $\tilde{S}_{f,g,c}^2$ is not semi-linear.
	\end{Prop}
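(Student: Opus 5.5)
The plan is to compute $\tilde{S}_{f,g,c}^2$ explicitly in terms of $2$-adic valuations, and then show that the resulting set has fibers whose eventual periods are unbounded, which no semi-linear set can have.

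\emph{Step 1: reduction to roots of unity.} For $\la\in\C^\times$ the conditions read $\la^{r^m}=-\la$ and $\la^{s^n}=-\la$, i.e. $\la^{r^m-1}=-1=\la^{s^n-1}$. If $m=0$ (or $n=0$) this says $\la=-\la$, which is impossible, so only pairs with $m,n\geq1$ occur. In that case $\la$ is a root of unity, say of order $N$. The relation $\la^{K}=-1$ holds iff $N$ is even, $N\mid 2K$ and $N\nmid K$. For $K=r^m-1\geq 2$ this is equivalent to $v_2(N)=v_2(K)+1$ together with the odd part of $N$ dividing $K$. Taking $\la$ of order exactly $2^{e+1}$ then gives
\[\tilde S_{f,g,c}^2=\left\lbrace (m,n)\in\Z_{\geq1}^2\colon v_2(r^m-1)=v_2(s^n-1)\right\rbrace .\]

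\emph{Step 2: lifting the exponent.} For odd $r$, the lifting-the-exponent lemma gives $v_2(r^m-1)=v_2(r-1)$ when $m$ is odd, and $v_2(r^m-1)=v_2(r^2-1)+v_2(m)-1$ when $m$ is even; the analogous formulas hold for $s$. Put $\alpha=v_2(r^2-1)-1\geq 2$ and $\beta=v_2(s^2-1)-1\geq2$. Then the pairs $(m,n)$ with both $m$ and $n$ even lie in the set iff
\[v_2(m)+\alpha=v_2(n)+\beta.\]
In particular, for each $j\geq \max(1,1+\beta-\alpha)$, the fiber of $\tilde S^2_{f,g,c}$ over $m=2^j$ contains all $n$ with $v_2(n)=j+\alpha-\beta=:j'\geq1$. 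These $n$ are all even, so the even $n$ in this fiber are exactly those with $v_2(n)=j'$. Any odd $n$ that occur satisfy $v_2(s-1)=j+\alpha\geq\alpha+1$, which can happen for at most one value of $j$, since $v_2(s-1)$ is a fixed number; we discard that $j$. For the remaining large $j$, the fiber is therefore exactly $\{n\colon v_2(n)=j'\}$. This set is eventually periodic with minimal period exactly $2^{j'+1}$, and this tends to infinity with $j$.

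\emph{Step 3: bounded fiber periods for semi-linear sets.} Suppose $\tilde S^2_{f,g,c}=\bigcup_i L(a_i,P_i)$. Fix $m_0$ and look at the fiber of one linear set $L(a,P)$ over $m_0$. Since all coordinates are nonnegative, only generators of the form $(0,q)$ can be used an unbounded number of times while the first coordinate stays equal to $m_0$; the generators with positive first coordinate can appear in only finitely many combinations. Hence the fiber is a finite union of translates of the submonoid of $\Z_{\geq0}$ generated by $\{q\colon (0,q)\in P\}$. Such a set is eventually periodic with period dividing $Q_P:=\lcm\{q\colon (0,q)\in P,\ q>0\}$, and is finite if no such $q$ exists. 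A finite union of eventually periodic sets is eventually periodic with period dividing the lcm of the individual periods. So every fiber of the semi-linear set is eventually periodic with period dividing the single integer $Q:=\lcm_i Q_{P_i}$, which does not depend on $m_0$. This contradicts Step 2, where the minimal periods $2^{j'+1}$ are unbounded. Hence $\tilde S^2_{f,g,c}$ is not semi-linear.

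The main obstacle I expect is the bookkeeping in Step 2: pinning down the fiber over $m=2^j$ exactly (the boundary cases with odd $n$, and the requirement $j'\geq1$), so that its minimal eventual period is genuinely $2^{j'+1}$ rather than just containing a large periodic piece. Step 3 is routine once it is phrased in terms of eventual periods.
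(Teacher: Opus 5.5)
Your proof is correct and follows essentially the paper's route. You reduce membership to $\nu_2(r^m-1)=\nu_2(s^n-1)$, apply the $2$-adic LTE lemma, and derive a contradiction from the fibers over $m=2^j$ against the uniform bound on eventual periods of fibers of a semi-linear set; your Step 3 is the paper's Lemma~\ref{uniD}. The differences are cosmetic: the paper first passes to $\{(m,n)\colon (2m,2n)\in\tilde S^2_{f,g,c}\}$ and uses the gcd criterion of Lemma~\ref{lem4.1} rather than your root-of-unity order argument, which spares it your bookkeeping with odd $n$.
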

	The methods used in the proof of Proposition~\ref{powertil} produce further examples. It is natural to further investigate the modified rank-two recurrence sets for power maps.
	
	\subsection{Partial results for rational maps}\label{secrat}
	Besides considering rank-two recurrence sets as in \S~\ref{secrk2}, another natural direction to generalize Theorem~\ref{thmYZ} is to study the case of rational maps $f,g,c\in\C(z)$. We define rational map versions of recurrence sets as follows:
	\begin{Def}
		Let $f,g\in\C(z)\setminus\C$ and $c\in\C(z)$.
		\begin{enumerate}
			\item Define the \emph{rank-one recurrence set of $(f,g,c)$} as
			\[\hat{S}_{f,g,c}:=\left\lbrace n\in\Z_{\geq0}\colon\exists\la\in\P^1(\C),f^{\circ n}(\la)=g^{\circ n}(\la)=c(\la)\right\rbrace.\]
			\item Define the \emph{rank-two recurrence set of $(f,g,c)$} as
			\[\hat{S}_{f,g,c}^2:=\left\lbrace(m,n)\in\Z_{\geq0}^2\colon\exists\la\in\P^1(\C),f^{\circ m}(\la)=g^{\circ n}(\la)=c(\la)\right\rbrace.\]
		\end{enumerate}
	\end{Def}
	
	Note that if $f$, $g$, and $c$ are all non-constant polynomials, then the existence of the common fixed point $\infty\in\P^1(\C)$ shows that
	\[\hat{S}_{f,g,c}=\Z_{\geq0}\quad\text{and}\quad\hat{S}_{f,g,c}^2=\Z_{\geq0}^2.\]
	
	\medskip
	
	In complex dynamics, the exceptional maps defined below are often regarded as special classes of rational maps of degree $\geq2$. They are the rational maps on $\P^1_\C$ that are related to algebraic groups and exhibit special dynamical properties. For endomorphisms $g:X\to X$ and $h:Y\to Y$ of algebraic varieties, we say that $h$ is \emph{semi-conjugate} to $g$ if there is a dominant morphism $\pi:X\to Y$ such that $\pi\circ g=h\circ\pi$, in which case we write $g\geq h$ (or $g\geq_\pi h$ when $\pi$ is specified).
	\begin{Def}\label{excep}
		Let $f:\P^1_\C\to\P^1_\C$ be an endomorphism of degree $d\geq2$. 
		\begin{itemize}
			\item The map $f$ is called a \emph{Latt\`es map} if there exists an endomorphism $\phi$ of an elliptic curve $E$ such that $\phi\geq f$. The map $f$ is called \emph{flexible Latt\`es} if there exist an elliptic curve $E$ and an integer $n\in\Z\setminus\{0,\pm1\}$ for which $[n]\geq_\pi f$, where $[n]$ denotes the multiplication-by-$n$ map on $E$ and $\pi:E\to\P^1$ is the quotient map by the action of $\{\pm1\}$. A non-flexible Latt\`es map is called \emph{rigid Latt\`es}.
			\item We say that $f$ is of \emph{monomial type} if it is semi-conjugate to a power map on $\P^1$. It is well known that $f$ is of monomial type if and only if it is conjugate to $z^{\pm d}$ or $\pm T_d(z)$, where $T_d(z)$ is the (normalized) \emph{Chebyshev polynomial of degree $d$}, i.e., the unique monic polynomial $T_d\in\C[z]$ of degree $d$ such that
			\[T_d(z+z^{-1})=z^d+z^{-d}.\]
			See \cite[Chapter~6]{Silverman2007} and \cite{milnor2006lattes}. 
			\item $f$ is called \emph{exceptional} if it is Latt\`es or of monomial type.
		\end{itemize}
		The above notions depend only on the conjugacy class of $f$. Moreover, it is well known that $f$ is exceptional if and only if its iterate $f^{\circ k}$ is exceptional for some (equivalently, every) $k\in\Z_{>0}$.
	\end{Def}
	Latt\`es maps are not (conjugate to) polynomials because their Julia sets are $\P^1(\C)$. Let $f\in\C[z]$ be a polynomial of degree $d\geq2$. Then $f$ is exceptional if and only if $f$ is (affinely) conjugate to $z^d$ or $\pm T_d(z)$.
	
	\medskip
	
	We obtain the following partial result for rational maps on rank-one recurrence sets, with emphasis on the non-exceptional case:
	\begin{Thm}\label{thmrat}
		Let $f,g,c\in\C(z)$ be such that $\deg(f)\geq2$ and $\deg(g)\geq2$. Suppose that one of the following conditions holds:
		\begin{enumerate}
			\item \label{rat1} $\deg(f)=\deg(g)$ and $f^{\circ (l+k)}=f^{\circ l}\circ g^{\circ k}$ for some integers $l\geq0$ and $k\geq1$;
			\item \label{rat2} $\deg(f)=\deg(g)$ and $f$ is non-exceptional;
			\item \label{rat3} $f$ is non-exceptional and $c\in\C$.
		\end{enumerate}
		Then the rank-one recurrence set
		\[\hat{S}_{f,g,c}=\left\lbrace n\in\Z_{\geq0}\colon\exists\la\in\P^1(\C),f^{\circ n}(\la)=g^{\circ n}(\la)=c(\la)\right\rbrace\]
		is semi-linear (equivalently, $\hat{S}_{f,g,c}$ is a finite union of arithmetic progressions).
	\end{Thm}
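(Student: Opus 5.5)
The plan is to reduce, as in \cite[\S~4]{YZ26}, to the case where infinitely many $n$ lie in $\hat{S}_{f,g,c}$, and then show that this forces a rigid relation between $f$ and $g$, under which the set can be computed directly. Throughout, I use that a subset of $\Z_{\geq0}$ is semi-linear if and only if it is eventually periodic. So it suffices to show that either $\hat{S}_{f,g,c}$ is finite, or there is a period $p$ with $n\in\hat{S}_{f,g,c}\iff n+p\in\hat{S}_{f,g,c}$ for all large $n$.

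\textbf{Case (\ref{rat1}).} Write $h=g^{\circ k}$. Since $f^{\circ(l+k)}=f^{\circ l}\circ h$, induction gives
\[f^{\circ(l+jk)}=f^{\circ l}\circ h^{\circ j}.\]
I will combine this with the identity $\deg f=\deg g$ to compare $f^{\circ n}(\la)=g^{\circ n}(\la)$ with $f^{\circ(n+k)}(\la)=g^{\circ(n+k)}(\la)$. The hope is an eventual periodicity argument: for $n\geq l$, a solution $\la$ for $n$ should be transported to a solution for $n+k$ and back.

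Concretely, consider the curve $C_n=\{(\la,\mu):\mu=f^{\circ n}(\la)=g^{\circ n}(\la)\}$. The membership condition says that $C_n$ meets the graph of $c$ in $\P^1\times\P^1$. I would try to show that for $n\geq l$ the loci $\{\la: f^{\circ n}(\la)=g^{\circ n}(\la)\}$ satisfy a recursion in $n$ with period $k$, up to finitely many exceptional points. This step is the one I am least sure about, and I may instead treat case (\ref{rat1}) by the same equidistribution route as case (\ref{rat2}) below.

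\textbf{Cases (\ref{rat2}) and (\ref{rat3}).} Suppose $\hat{S}_{f,g,c}$ is infinite. Take solutions $\la_n$ with $f^{\circ n}(\la_n)=g^{\circ n}(\la_n)=c(\la_n)$. Counting, the solutions of $f^{\circ n}=c$ form a set of size about $\deg(f)^n$, and by Lyubich / Freire--Lopes--Ma\~n\'e equidistribution these preimage-type points equidistribute to the maximal entropy measure $\mu_f$. However, only one $\la_n$ is guaranteed per $n$, so this is not enough on its own. Instead I would follow the arithmetic route of \cite{YZ26}:
\begin{enumerate}
\item Specialize to a finitely generated field.
\item Use heights: $\hat h_f(c(\la_n))$ and $\hat h_g(c(\la_n))$ relate to $\hat h_f(\la_n)$ through the equations.
\item Conclude that $\la_n$ has small height with respect to both maps.
\item Deduce $\mu_f=\mu_g$ via arithmetic equidistribution.
\end{enumerate}
This is the analogue of \S~\ref{secsamemupoly} for rational maps. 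In case (\ref{rat3}), where $c$ is a constant $\gamma$, the point $\gamma$ is a common value of $f^{\circ n}$ and $g^{\circ n}$ at $\la_n$. Applying Galois conjugates gives many small points, and equidistribution yields $\mu_f=\mu_g$.

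With $\mu_f=\mu_g$ and $f$ non-exceptional, the classification of Levin--Przytycki / Ye says that $f$ and $g$ have common iterates up to a finite symmetry group. More precisely, $g^{\circ k}=\sigma\circ f^{\circ k'}$ with $\deg$ matching. In case (\ref{rat2}) this gives $k=k'$ and, after passing to iterates, a relation of the form in (\ref{rat1}). In case (\ref{rat3}) it gives $g^{\circ a}$ and $f^{\circ b}$ sharing an iterate up to a finite group $\Sigma$ of Möbius maps preserving $\mu_f$.

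Then I reduce to a direct analysis. Write $n=qk+r$ and consider $f^{\circ n}(\la)=g^{\circ n}(\la)$. Up to elements of $\Sigma$, this becomes $\sigma(f^{\circ N}(\la'))=f^{\circ N'}(\la')$, which determines which residues $r$ give solutions. Finiteness of $\Sigma$ gives eventual periodicity in $n$.

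\textbf{Main obstacle.} The hardest step is obtaining $\mu_f=\mu_g$ from only one solution per $n$ in the rational (non-polynomial) setting. For polynomials, \cite{YZ26} uses escape-rate/Green function arguments at $\infty$, which are not available here. I would try to replace them by the adelic height argument, exploiting non-exceptionality to rule out the degenerate configurations that arise in the exceptional case.
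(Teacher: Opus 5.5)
You never actually prove case~(\ref{rat1}), and everything else rests on it. The paper obtains (\ref{rat2}) from (\ref{rat1}) via Corollary~\ref{rat1to2}, so your fallback of handling (\ref{rat1}) by the route of (\ref{rat2}) is circular: that route ends by producing a relation of type (\ref{rat1}), and it needs non-exceptionality, which (\ref{rat1}) does not assume.

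The recursion you hope for does hold, and exactly. For $n\geq l$ one has $f^{\circ(n+k)}=f^{\circ n}\circ g^{\circ k}$, hence $L_{n+k}=(g^{\circ k})^{-1}(L_n)$ for $L_n=\{f^{\circ n}=g^{\circ n}\}$. But it does not transport solutions, because the constraint $c(\la)=f^{\circ n}(\la)$ is not compatible with pulling back by $g^{\circ k}$. In fact the eventual period of $\hat{S}_{f,g,c}$ need not be $k$. Take $g(z)=z^2-1$, $f=-g$, $c(z)=z/(z-1)$: then $f^{\circ 2}=f\circ g$ (so $l=k=1$), yet $\hat{S}_{f,g,c}=2\Z_{\geq0}$.

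The missing idea is to push everything down to a fixed residue class. Choose $(l,k)$ lexicographically minimal; then $f^{\circ j}\neq g^{\circ j}$ for $l\leq j\leq l+k-1$ with $j>0$, so $J_j=\{f^{\circ j}=g^{\circ j}\}$ is finite. Since $f^{\circ(j+km)}=f^{\circ j}\circ g^{\circ km}$, you get $j+km\in\hat{S}_{f,g,c}$ iff there are $e\in J_j$ and $\la$ with $g^{\circ km}(\la)=e$ and $c(\la)=g^{\circ j}(e)$.
\begin{itemize}
\item For non-constant $c$, $\la$ ranges over the finite set $c^{-1}(g^{\circ j}(J_j))$, independent of $m$, and each condition $g^{\circ km}(\la)=e$ cuts out a (possibly empty) arithmetic progression in $m$.
\item For constant $c$ the answer is all-or-nothing, since $g^{\circ km}$ is surjective.
\item The residue $j=0$ (when $l=0$ and $f^{\circ k}=g^{\circ k}$) misses at most one $m$.
\end{itemize}

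In cases (\ref{rat2}) and (\ref{rat3}), the classification you invoke is false for rational maps. A relation $g^{\circ k}=\sigma\circ f^{\circ k'}$ with $\sigma$ in a finite M\"obius group is what holds for polynomials (Theorem~\ref{polysamemu}). Ye~\cite{Ye} constructed non-exceptional rational $f,g$ with $\mu_f=\mu_g$ admitting no such relation. What Levin--Przytycki (Theorem~\ref{ratsamemu}) gives is $f^{\circ 2l}=f^{\circ l}\circ g^{\circ k}$ with $\deg(f)^l=\deg(g)^k$. In case~(\ref{rat2}) this is literally hypothesis~(\ref{rat1}) with $k=l$.

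In case~(\ref{rat3}), your final step only analyzes the coincidence equation $f^{\circ n}(\la)=g^{\circ n}(\la)$ and never uses that $c$ is constant, which is exactly what is needed when $\deg(f)\neq\deg(g)$. The paper's key step assumes $\deg(f)>\deg(g)$, so $l<k$. For $n\geq N_0$, set $q=\lceil n/k\rceil$ and $i=kq-n$. Iterating the relation gives $f^{\circ n}=f^{\circ(n-lq)}\circ g^{\circ i}\circ g^{\circ n}$, so $g^{\circ n}(\la)=c$ forces $f^{\circ n}(\la)=f^{\circ(n-lq)}(g^{\circ i}(c))$. Since $g^{\circ n}(\la)=c$ is always solvable, membership becomes $f^{\circ(n-lq)}(c_j)=c$ with $c_j=g^{\circ i}(c)$. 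This is an orbit-return condition along $n\mapsto n-lq=-i+(k-l)(n+i)/k$, which is Presburger-definable on each residue class.

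Finally, the dichotomy must be on $\#\hat{Q}_{f,g,c}$, not on $\#\hat{S}_{f,g,c}$. The height argument needs infinitely many distinct $\la$; in the example above only $\la=0$ works for $n>0$. The finite case then needs the separate argument of Proposition~\ref{Qratfinite}. The step you flag as the main obstacle, $\mu_f=\mu_g$, is the routine adaptation of Proposition~\ref{poly2musame} (Proposition~\ref{ratmusame}). The substance of the proof is the combinatorics above.
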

	As in the polynomial case, we show that $f$ and $g$ have the same measure of maximal entropy under certain conditions (see \S~\ref{secsamemurat}), and then apply a well-known description for such pairs $(f,g)$ (see Theorem~\ref{ratsamemu}). We remark that the case (\ref{rat2}) of Theorem~\ref{thmrat} can be deduced from the case (\ref{rat1}); see Corollary~\ref{rat1to2}.
	
	We also prove that $\hat{S}_{f,g,c}$ is semi-linear for a special case of exceptional maps in Proposition~\ref{powerconst}.
	
	We conjecture that the recurrence sets for rational maps are semi-linear:
	\begin{Conj}\label{ratconj}
		Let $f,g\in\C(z)\setminus\C$ be such that $\deg(f)\geq2$ or $\deg(g)\geq2$, and let $c\in\C(z)$. Then:
		\begin{enumerate}
			\item \label{ratconj1} $\hat{S}_{f,g,c}$ is a semi-linear subset of $\Z_{\geq0}$;
			\item \label{ratconj2} $\hat{S}_{f,g,c}^2$ is a semi-linear subset of $\Z_{\geq0}^2$.
		\end{enumerate}
	\end{Conj}
	Clearly, Conjecture~\ref{ratconj}~(\ref{ratconj2}) implies Conjecture~\ref{ratconj}~(\ref{ratconj1}).
	
	\subsection{Questions of dynamical Mordell--Lang type}\label{secDMLQ}
	\subsubsection*{Dynamical Mordell--Lang conjecture and higher-dimensional questions}
	The dynamical Mordell--Lang conjecture, proposed by Ghioca and Tucker \cite{GT09}, is a dynamical analogue of the classical Mordell--Lang conjecture on subvarieties of semi-abelian varieties:
	\begin{Conj}[Dynamical Mordell--Lang conjecture (DML)]
		Let $k$ be an algebraically closed field of characteristic zero. Let $X$ be a quasi-projective variety over $k$ and $f:X\dashrightarrow X$ be a dominant rational self-map. Then for every Zariski closed subset $V\subseteq X$ and every point $x\in X(k)$ whose forward $f$-orbit is well-defined, the set
		\[\left\lbrace n\in\Z_{\geq0}\colon f^{\circ n}(x)\in V\right\rbrace\]
		is a finite union of arithmetic progressions.
	\end{Conj}
	We remark that the original version of DML only deals with endomorphisms. For some progress on DML, see \cite{DMLbook,xDML} and references therein. By replacing the point $x$ with a subvariety $Z$, Xie proposed the following higher-dimensional DML question \cite[Question~9.13~(i)]{xDML} for endomorphisms:
	\begin{Que}[Xie]\label{Qxie}
		Let $k$ be an algebraically closed field of characteristic zero (for example, $\C$). Let $X$ be a quasi-projective variety over $k$, and $f:X\to X$ be an endomorphism. Let $Z$ and $V$ be irreducible subvarieties of $X$. Can one describe the set $\left\lbrace n\in\Z_{\geq0}\colon f^{\circ n}(Z)\cap V\neq\emptyset\right\rbrace$? For example, is it a finite union of arithmetic progressions?
	\end{Que}
	In the original question, the following additional condition is imposed:
	\begin{equation}\label{dimcon}
		\dim Z+\dim V<\dim X
	\end{equation}
	on dimensions. However, this condition \eqref{dimcon} is redundant because it is always satisfied after replacing $(X,f,Z,V)$ with $(X\times\A_k^N,f\times\Id_{\A_k^N},Z\times\{0\},V\times\{0\})$ for any sufficiently large integer $N\gg1$, where $0=(0,\dots,0)\in\A_k^N(k)$.
	
	\medskip
	
	Unfortunately, the higher-dimensional (rank-one) recurrence set
	\[\left\lbrace n\in\Z_{\geq0}\colon f^{\circ n}(Z)\cap V\neq\emptyset\right\rbrace\]
	may be intricate, and it is not a finite union of arithmetic progressions in general. An elementary example of Lee and Nam (cf.~\cite[Example~1.12]{YZ26}) shows that for $X=\A_k^5$, the set of all composite numbers can be achieved as such a higher-dimensional recurrence set.
	
	On the other hand, the theorem of Yang and Zhong (Theorem~\ref{thmYZ}) can be interpreted in the framework of Question~\ref{Qxie}; hence, it is possible to describe the higher-dimensional recurrence sets in certain special situations. For the reader's convenience, we repeat the interpretation in \cite[\S~1.3]{YZ26} by Yang--Zhong as follows. Let $f,g\in\C[z]\setminus\C$ and $c\in\C[z]$. Set $X=\A^3_\C$. Let $F:X\to X$ be the endomorphism given by
	\[F=(f,g,\Id):X\to X,\,(x,y,z)\mapsto(f(x),g(y),z).\]
	Define
	\[Z=\{(x,x,c(x))\colon x\in\A^1_\C\}\quad\text{and}\quad V=\{(x,x,x)\colon x\in\A^1_\C\},\]
	which are irreducible subvarieties of $X$ of dimension one. Then the rank-one recurrence set
	\[S_{f,g,c}:=\left\lbrace n\in\Z_{\geq0}\colon \exists\la\in\C, f^{\circ n}(\la)=g^{\circ n}(\la)=c(\la)\right\rbrace\]
	has an equivalent description:
	\[S_{f,g,c}=\left\lbrace n\in\Z_{\geq0}\colon f^{\circ n}(Z)\cap V\neq\emptyset\right\rbrace.\]
	The rational map version $\hat{S}_{f,g,c}$ admits a similar description by replacing $X=\A^3_\C$ and $\A^1_\C$ with $X=(\P^1_\C)^3$ and $\P^1_\C$, respectively.
	
	\subsubsection*{Questions of DML type of higher rank}
	Inspired by Question~\ref{Qxie}, we make the following questions of (higher-dimensional) DML type of higher rank:
	\begin{Que}\label{Qhighrk}
		Let $k$ be an algebraically closed field of characteristic zero (for example, $\C$). Let $X$ be a quasi-projective variety over $k$, and $V\subseteq X$ be an irreducible subvariety. Let $r\in\Z_{>0}$ and $f_1,\dots,f_r:X\to X$ be endomorphisms.
		\begin{enumerate}
			\item[(i)] Suppose that the endomorphisms $f_1,\dots,f_r$ are pairwise commuting. Let $Z\subseteq X$ be an irreducible subvariety. Define
			\[R:=\left\lbrace(n_1,\dots,n_r)\in\Z_{\geq0}^r\colon \left(f_1^{\circ n_1}\circ\dots\circ f_r^{\circ n_r}\right)(Z)\cap V\neq\emptyset\right\rbrace.\]
			Can one describe the rank-$r$ recurrence set $R$?
			\item[(ii)] Let $Z_1,\dots,Z_r\subseteq X$ be irreducible subvarieties. Define
			\[R^\prime:=\left\lbrace (n_1,\dots,n_r)\in\Z_{\geq0}^r\colon f_1^{\circ n_1}(Z_1)\cap\dots\cap f_r^{\circ n_r}(Z_r)\cap V\neq\emptyset\right\rbrace.\]
			Can one describe the rank-$r$ recurrence set $R^\prime$?
		\end{enumerate}
	\end{Que}
	Similarly, the sets $S_{f,g,c}^2$ and $\hat{S}_{f,g,c}^2$ can be interpreted within the framework of Question~\ref{Qhighrk} with $r=2$ as follows. Let $f,g,c\in\C[z]$ be polynomials such that both $f$ and $g$ are non-constant. Set $X=\A^3_\C$. Let $F:X\to X$ be the endomorphism given by
	\[F=(f,g,\Id):X\to X,\,(x,y,z)\mapsto(f(x),g(y),z).\]
	Define
	\[Z=\{(x,x,c(x))\colon x\in\A^1_\C\},\quad V=\{(x,x,x)\colon x\in\A^1_\C\},\]
	and
	\[V^\prime=\{(x,y,x)\colon x,y\in\A^1_\C\}.\]
	Both $Z$ and $V^\prime$ are irreducible subvarieties of $X$. Define three endomorphisms $F,G,G^\prime:X\to X$ by
	\[F=(f,\Id,\Id),\quad G=(\Id,g,\Id),\quad\text{and}\quad G^\prime=(g,\Id,\Id).\]
	Note that $F$ and $G$ commute: $F\circ G=G\circ F$. Then the rank-two recurrence set
	\[S_{f,g,c}^2:=\left\lbrace (m,n)\in\Z_{\geq0}^2\colon \exists\la\in\C, f^{\circ m}(\la)=g^{\circ n}(\la)=c(\la)\right\rbrace\]
	has two equivalent descriptions:
	\begin{align*}
		S_{f,g,c}^2&=\left\lbrace (m,n)\in\Z_{\geq0}^2\colon \left(F^{\circ m}\circ G^{\circ n}\right)(Z)\cap V\neq\emptyset\right\rbrace\\
		&=\left\lbrace (m,n)\in\Z_{\geq0}^2\colon F^{\circ m}(Z)\cap (G^\prime)^{\circ n}(Z)\cap V^\prime\neq\emptyset\right\rbrace.
	\end{align*}
	The rational map version $\hat{S}_{f,g,c}^2$ admits analogous descriptions by replacing $X=\A^3_\C$ and $\A^1_\C$ with $X=(\P^1_\C)^3$ and $\P^1_\C$, respectively.
	
	In general, the higher-rank recurrence sets $R$ and $R^\prime$ in Question~\ref{Qhighrk} seem to be extremely intricate and elusive. See Example~\ref{deg1counter} and \cite[\S~2.1]{xDML}. However, we still expect that one may obtain meaningful descriptions in special situations, as illustrated by Theorem~\ref{thmrk2poly}.
	
	\subsection*{Structure of the paper}
	In \S~\ref{secPresb}, we recall the theorem of Ginsburg and Spanier that semi-linear sets are exactly definable sets in the Presburger arithmetic structure on $\Z_{\geq0}$, and related results.
	
	In \S~\ref{secsamemu}, assuming $\deg(f)\geq2$ and $\deg(g)\geq2$, we show that $f$ and $g$ have the same measure of maximal entropy under certain conditions, for polynomials in \S~\ref{secsamemupoly} and for rational maps in \S~\ref{secsamemurat}.
	
	In \S~\ref{secexcep}, we analyze the rank-two recurrence sets for exceptional polynomials, and complete the proof of Proposition~\ref{powertil}. Then we finish the proofs of Theorem~\ref{thmrk2poly} and Theorem~\ref{deg111} in \S~\ref{secrk2poly} and \S~\ref{sec111}, respectively. The proof of Theorem~\ref{thmrat} is given in \S~\ref{secratpf}.
	
	\section{Presburger arithmetic and semi-linear sets}\label{secPresb}
	A theorem of Ginsburg and Spanier asserts that semi-linear sets (see Definition~\ref{semilinear}) are exactly the definable sets in a certain first-order structure $\Pr$ with underlying set $\Z_{\geq0}$. This characterization of semi-linear sets is useful for the proofs of our main theorems. For basic material on first-order model theory, we refer the reader to \cite{217}.
	
	\begin{Def}[Presburger arithmetic]
		The \emph{(modified) Presburger arithmetic} is the first-order structure
		\[\Pr=\left(\Z_{\geq0},+,<,0,1,(P_n)_{n\in\Z_{\geq2}}\right),\]
		where $\left(\Z_{\geq0},+,<,0,1\right)$ have their usual meanings, and for each integer $n\geq2$, $P_n$ is the unary relation consisting of elements divisible by $n$, i.e., $P_n$ is the subset $n\Z_{\geq0}$ of $\Z_{\geq0}$.
	\end{Def}
	\begin{Rem}
		The language of $\Pr$ is $\sL=\left\lbrace+,<,0,1,(P_n)_{n\in\Z_{\geq2}}\right\rbrace$. Note that $\left(\Z_{\geq0},+,<,0\right)$ is the usual ordered additive monoid structure on $\Z_{\geq0}$.
		
		In the literature, Presburger arithmetic usually refers to the structure
		\[\Pr_{\Z}=\left(\Z,+,<,0,1,(P_{n,\Z})_{n\in\Z_{\geq2}}\right)\]
		(or its theory), where $P_{n,\Z}$ is the unary relation given by $P_{n,\Z}=n\Z$ for each integer $n\geq2$. In this article, we are only concerned with the modified one $\Pr$ (i.e., the restriction of $\Pr_\Z$ to $\Z_{\geq0}$) and call it \emph{Presburger arithmetic} for simplicity.
		
		It is easy to see that the theory of $\Pr$ (also, the theory of $\Pr_\Z$) is a complete decidable theory with quantifier elimination (cf.~\cite[Corollary~3.1.21]{217}). 
	\end{Rem}
	Ginsburg and Spanier proved that semi-linear sets coincide with the definable sets of $\Pr$:
	\begin{Thm}[{\cite[Theorem~1.3]{GS66}}]\label{thmGS}
		For every integer $k\geq1$ and every subset $Z$ of $\Z_{\geq0}^k$, $Z$ is semi-linear in the sense of Definition~\ref{semilinear} if and only if $Z$ is definable in the structure $\Pr$.
	\end{Thm}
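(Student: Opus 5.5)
The proof splits into two directions. The easy direction is that every semi-linear set is definable. The harder direction is that every definable set is semi-linear, which I would get from quantifier elimination for $\Pr$ together with closure properties of semi-linear sets.

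\emph{Semi-linear implies definable.} A finite union of definable sets is definable, so it suffices to treat a linear set $L(a,P)$ with $P=\{b_1,\dots,b_r\}$. A vector $x=(x_1,\dots,x_k)$ lies in $L(a,P)$ if and only if
\[\exists t_1,\dots,t_r\ \bigwedge_{i=1}^k x_i=a_i+\sum_{j=1}^r b_{j,i}t_j.\]
Here multiplication by the fixed integers $a_i$ and $b_{j,i}$ is an abbreviation for iterated addition of $1$ and of $t_j$. This is an $\sL$-formula, so $L(a,P)$ is definable.

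\emph{Definable implies semi-linear.} Since $\Pr$ has quantifier elimination, every definable $Z\subseteq\Z_{\geq0}^k$ is a finite Boolean combination of atomic sets. Terms are linear forms with nonnegative integer coefficients plus constants, so each atomic set has one of three forms:
\[\{x\colon \ell(x)=\ell'(x)\},\qquad \{x\colon \ell(x)<\ell'(x)\},\qquad \{x\colon P_n(\ell(x))\}.\]
The plan has two steps.

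\begin{enumerate}
\item Each atomic set is semi-linear. The set $\{P_n(\ell(x))\}$ is a finite union of translates of $n\Z_{\geq0}^k$ indexed by residues mod $n$, hence semi-linear. A set $\{\ell=\ell'\}$ or $\{\ell<\ell'\}$ is the set of nonnegative integer solutions of a linear equation $\sum c_ix_i=e$ with $c_i\in\Z$; the strict inequality is reduced to this by adding a slack variable $y\geq1$. The solution set of such a system in $\Z_{\geq0}^{k+1}$ equals the set of minimal solutions plus the monoid generated by the minimal solutions of the homogeneous system. Both sets of minimal solutions are finite by Dickson's lemma, so the solution set is semi-linear. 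The projection forgetting $y$ preserves semi-linearity, because the image of $L(a,P)$ is $L(\pi a,\pi P)$.
\item Semi-linear sets are closed under finite unions, which is trivial, and under complement, which is the main obstacle. For complement I would use that intersections of semi-linear sets are semi-linear. Given $x=a+\sum t_jb_j=a'+\sum s_jb'_j$, the pairs $(t,s)$ form the nonnegative solutions of a linear system, which are semi-linear by the Dickson argument above, and then one projects. For complement itself I would first pass, using Remark~(ii), to linear sets whose generators are linearly independent. The complement of such a set within its rational cone is then cut out by finitely many linear inequalities and congruence conditions. Its complement in $\Z_{\geq0}^k$ is therefore a Boolean combination of atomic-type sets, which I would express through linear inequalities and residue classes and decompose again by the Dickson argument.
\end{enumerate}

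Alternatively, if one only needs the direction used later in the paper, one can cite \cite{GS66} directly. That direction is that sets built from semi-linear sets by first-order operations are semi-linear.
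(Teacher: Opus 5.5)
Your proof is correct, but note that the paper does not prove this statement at all. It imports the theorem from Ginsburg--Spanier \cite{GS66} and then derives the closure properties (Corollary~\ref{semiclosed}) from it. Your argument is a self-contained reconstruction from elementary tools:
\begin{itemize}
\item the Dickson's-lemma description of nonnegative solutions of a linear system;
\item images of linear sets under monoid homomorphisms;
\item quantifier elimination.
\end{itemize}
It is redundant in one respect: you use both QE and closure under complement, and either one alone suffices.

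\emph{Using QE alone.} Push negations down to the atoms. This is possible in $\sL$, since $\neg(t_1=t_2)\leftrightarrow t_1<t_2\vee t_2<t_1$, $\neg(t_1<t_2)\leftrightarrow t_1=t_2\vee t_2<t_1$, and $\neg P_n(t)\leftrightarrow\bigvee_{j=1}^{n-1}P_n(t+j)$. Every definable set is then a finite union of sets cut out by conjunctions of atoms. Each such conjunction is the projection of the nonnegative solution set of a single linear system, with a slack variable for each $<$ and a quotient variable $w$ with $t=nw$ for each $P_n$. Your Dickson step then finishes the proof. The complement step you call the main obstacle disappears, and complement closure follows from the theorem, exactly as in the paper.

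\emph{Using complement closure alone.} Conversely, keeping the complement step lets you drop QE and argue by induction on formulas, using projections for $\exists$.

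If you keep the complement step, two points need tightening.
\begin{itemize}
\item The remark after Definition~\ref{semilinear} only gives $\leq k$ generators. Your cone-plus-lattice description needs linearly independent generators. The same exchange argument gives this: from $\sum_{i\in I}\alpha_ib_i=\sum_{j\in J}\beta_jb_j$ with positive integer coefficients and $I\neq\emptyset$, one gets $L(a,P)=\bigcup_{i\in I}\bigcup_{0\leq c<\alpha_i}L(a+cb_i,P\setminus\{b_i\})$.
\item The description of $L(a,P)$ must include the integer linear equations cutting out $a+\operatorname{span}(b_i)$. You should also push the negation down to the atoms, so that the complement is a finite union of atomic-type sets. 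Calling it merely a Boolean combination would presuppose the closure you are proving.
\end{itemize}
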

	\begin{Rem}
		By \cite[Exercise~1.4.15]{217}, the definable sets of $\Pr$ (equivalently, semi-linear sets) coincide with the definable sets of the structure
		\[(\Z_{\geq0},+,0,1).\]
		See \cite[\S~1]{GS66} for a description of definable sets of $\Pr$.
	\end{Rem}
	
	Theorem~\ref{thmGS} immediately implies the following corollary (see \cite[Theorem~1.1]{GS66}):
	\begin{Cor}\label{semiclosed}
		The family of semi-linear subsets of $\Z_{\geq0}^k$ ($k\geq1$) is closed under finite unions, finite intersections, and complementation. Projections of semi-linear sets are semi-linear.
	\end{Cor}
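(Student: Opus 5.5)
Corollary~\ref{semiclosed} follows from Theorem~\ref{thmGS} once we check that definable sets of $\Pr$ are closed under the corresponding logical operations. Throughout, we identify a subset $Z\subseteq\Z_{\geq0}^k$ with the set it defines via a formula $\varphi(x_1,\dots,x_k)$ in the language $\sL$, possibly with parameters from $\Z_{\geq0}$. Every element of $\Z_{\geq0}$ is a closed term $1+\dots+1$ (or $0$), so parameters can be eliminated if one prefers parameter-free definability.

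For Boolean operations, suppose $Z_1,Z_2\subseteq\Z_{\geq0}^k$ are semi-linear. By Theorem~\ref{thmGS} they are defined by formulas $\varphi_1(\bar x)$ and $\varphi_2(\bar x)$. Then $Z_1\cup Z_2$, $Z_1\cap Z_2$ and $\Z_{\geq0}^k\setminus Z_1$ are defined by $\varphi_1\vee\varphi_2$, $\varphi_1\wedge\varphi_2$ and $\neg\varphi_1$, respectively. Applying the reverse direction of Theorem~\ref{thmGS} shows they are semi-linear. Closure under finite unions and intersections then follows by induction on the number of sets. For unions one could also argue directly from Definition~\ref{semilinear}, since a finite union of finite unions of linear sets is again such a union.

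For projections, let $\pi\colon\Z_{\geq0}^k\to\Z_{\geq0}^{j}$ be a coordinate projection onto some subset of coordinates; after permuting coordinates we may take it to be onto the first $j$. Permuting coordinates preserves semi-linearity: apply the permutation to the initial vector and to each generator. If $Z$ is defined by $\varphi(x_1,\dots,x_k)$, then $\pi(Z)$ is defined by $\exists x_{j+1}\cdots\exists x_k\,\varphi$, so it is definable and hence semi-linear by Theorem~\ref{thmGS}.

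Alternatively, closure under projections follows directly from Definition~\ref{semilinear}, since $\pi(L(a,P))=L(\pi(a),\pi(P))$ and $\pi$ commutes with finite unions. None of these steps is a real obstacle. The only point needing care is that "definable" in Theorem~\ref{thmGS} is understood consistently, with or without parameters, and this is harmless because $0$ and $1$ are in the language.
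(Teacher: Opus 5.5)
Your proposal is correct and follows the paper's route: the paper deduces Corollary~\ref{semiclosed} immediately from the Ginsburg--Spanier characterization (Theorem~\ref{thmGS}), since Boolean combinations and existential projections of $\Pr$-definable sets are again definable, which is exactly your argument. Your side remarks are also accurate: parameters can be eliminated via closed terms, and projections can be checked directly via $\pi(L(a,P))=L(\pi(a),\pi(P))$, in line with the paper's note that these closure properties admit elementary proofs.
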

	As a direct consequence of Corollary~\ref{semiclosed}, we obtain:
	\begin{Cor}\label{rk2tork1}
		Let $Z\subseteq\Z_{\geq0}^2$ be semi-linear. Define
		\[\pi(Z):=\left\lbrace n\in\Z_{\geq0}\colon (n,n)\in Z\right\rbrace\subseteq\Z_{\geq0}.\]
		Then $\pi(Z)$ is a finite union of arithmetic progressions.
	\end{Cor}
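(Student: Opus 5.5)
The plan is to express $\pi(Z)$ as the preimage of $Z$ under the diagonal embedding and then use the closure properties in Corollary~\ref{semiclosed}, together with Theorem~\ref{thmGS}. Concretely, I would write
\[\pi(Z)=p_1\bigl(Z\cap\Delta\bigr),\qquad \Delta:=\left\lbrace(n,n)\colon n\in\Z_{\geq0}\right\rbrace,\]
where $p_1\colon\Z_{\geq0}^2\to\Z_{\geq0}$ is the projection to the first coordinate. Indeed, $n\in\pi(Z)$ exactly when $(n,n)\in Z\cap\Delta$, and $p_1$ maps $Z\cap\Delta$ bijectively onto $\pi(Z)$.

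The steps, in order, are as follows.

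First, $\Delta$ is semi-linear, since $\Delta=L\bigl((0,0),\{(1,1)\}\bigr)$ is linear in the sense of Definition~\ref{semilinear}.

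Second, by Corollary~\ref{semiclosed}, the intersection $Z\cap\Delta$ is semi-linear.

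Third, again by Corollary~\ref{semiclosed}, its projection $p_1(Z\cap\Delta)=\pi(Z)$ is a semi-linear subset of $\Z_{\geq0}$.

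Finally, by part (iii) of the Remark following Definition~\ref{semilinear}, a semi-linear subset of $\Z_{\geq0}$ is a finite union of arithmetic progressions.

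Alternatively, I could argue directly via Theorem~\ref{thmGS}. If $Z$ is defined in $\Pr$ by a formula $\varphi(x,y)$, then $\pi(Z)$ is defined by $\varphi(x,x)$, hence is definable, hence is semi-linear.

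I do not expect a real obstacle; the argument is a direct unwinding of Corollary~\ref{semiclosed}. The only point needing care is the rank-one identification in Remark (iii). If one wanted that verified rather than cited, note that a linear set $L(a,P)\subseteq\Z_{\geq0}$ with $P=\{b_1,\dots,b_r\}$ is eventually periodic with period $\gcd(b_1,\dots,b_r)$. It therefore agrees with a finite set together with finitely many arithmetic progressions, and each singleton is itself an arithmetic progression with difference $0$.
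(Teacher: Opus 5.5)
Your proposal is correct and matches the paper, which presents this corollary as a direct consequence of Corollary~\ref{semiclosed}. Intersecting $Z$ with the linear diagonal $\Delta$ and projecting to the first coordinate yields a semi-linear subset of $\Z_{\geq0}$, which is a finite union of arithmetic progressions by the rank-one remark.
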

	Note that Corollaries~\ref{semiclosed}~and~\ref{rk2tork1} can also be proven by elementary arguments.
	
	\begin{Eg}
		Define $Z=\left\lbrace (m,n)\in\Z_{\geq0}^2\colon m\leq n^2\right\rbrace$. Then $Z$ is a submonoid of the additive monoid $(\Z_{\geq0}^2,+,0)$, and it is not a semi-linear set; see \cite[pp.\,286--287]{GS66}. However, for all integers $a,b\geq0$ and $p,q\geq1$, the set
		\[\left\lbrace n\in\Z_{\geq0}\colon (a+pn,b+qn)\in Z\right\rbrace\]
		is semi-linear. In particular, the converse of Corollary~\ref{rk2tork1} is false.
	\end{Eg}
	
	The following definition and lemma are useful in the proof of Proposition~\ref{powertil}.
	\begin{Def}
		Let $Z\subseteq\Z_{\geq0}$ be a semi-linear set. Define the \emph{eventual period} $\ep(Z)$ of $Z$ as follows. If $Z$ is finite, then $\ep(Z):=0$. If $Z$ is infinite, then $\ep(Z)$ is the minimal positive integer $m$ such that 
		\[Z\cap [N,\infty)=\left\lbrace N+r+mk\colon r\in R\text{ and }k\in\Z_{\geq0}\right\rbrace\]
		for some integer $N\geq1$ and some $\emptyset\neq R\subseteq\{0,1,\dots,m-1\}$.
	\end{Def}
	
	\begin{Lem}\label{uniD}
		Let $Z\subseteq\Z_{\geq0}^2$ be a semi-linear set. For each $m\in\Z_{\geq0}$, set
		\[Z_m:=\left\lbrace n\in\Z_{\geq0}\colon (m,n)\in Z\right\rbrace,\]
		which is semi-linear by Theorem~\ref{thmGS}. Then there exists an integer $D=D(Z)\geq1$ such that for every $m\in\Z_{\geq0}$, either $\ep(Z_m)=0$ or $\ep(Z_m)\mid D$. 
	\end{Lem}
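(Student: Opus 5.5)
Since $Z$ is semi-linear, it is a finite union of linear sets $L(a_i,P_i)$, $1\leq i\leq t$, and by Remark~(ii) after Definition~\ref{semilinear} we may assume each $P_i$ has at most two generators. Fibres commute with unions: $Z_m=\bigcup_i (L(a_i,P_i))_m$. The plan is to show that each fibre $(L(a_i,P_i))_m$ is, for every $m$, a set whose eventual period (if infinite) divides some $D_i$ depending only on $(a_i,P_i)$. We then take $D=\lcm(D_1,\dots,D_t)$, after handling how eventual periods behave under finite unions.

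\emph{Union step.} Call a semi-linear set $Y\subseteq\Z_{\geq0}$ \emph{$D$-periodic} if there is $N$ with $y\in Y\iff y+D\in Y$ for all $y\geq N$. For infinite $Y$ this is equivalent to $\ep(Y)\mid D$. Indeed, $\ep(Y)$ is the minimal eventual period, and the eventual periods of an eventually periodic set are exactly the multiples of the minimal one; this is the standard gcd argument, since if $p$ and $q$ are eventual periods then so is $\gcd(p,q)$. Finite sets are $D$-periodic for every $D$. A finite union of $D$-periodic sets is $D$-periodic: take the maximum of the thresholds $N$. So $\ep(Z_m)=0$ or $\ep(Z_m)\mid D$ follows once each fibre of each $L(a_i,P_i)$ is $D_i$-periodic.

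\emph{Linear step.} Fix $L=L(a,P)$ with $a=(a_1,a_2)$. Write the generators as $(p,q)$, and split them into vertical ones $(0,q)$ and the rest with $p>0$. An element of $L$ in the fibre over $m$ has the form $a+\sum k_j(p_j,q_j)$ with $\sum k_jp_j=m-a_1$. The generators with $p_j>0$ can therefore be used only boundedly often for fixed $m$, giving finitely many choices. The second coordinate is then $a_2+(\text{one of finitely many values})+\sum_{\text{vertical}} k_j q_j$.

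If there are no vertical generators, the fibre is finite. Otherwise, let $e=\gcd$ of the vertical $q_j$ (with $q_j>0$; vertical generators $(0,0)$ are discarded). The numerical semigroup generated by these $q_j$ contains all sufficiently large multiples of $e$, so it is $e$-periodic. A finite union of translates of it is then $e$-periodic. Hence each fibre of $L$ is $D_L$-periodic with $D_L=e$, independent of $m$, and the lemma follows with $D=\lcm$ of the $D_L$.

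The only mildly delicate point is the union step: the minimal period of a union may be a proper divisor of the lcm, but divisibility $\ep\mid D$ is all that is needed, and it follows from the characterization of eventual periods as multiples of $\ep$. There is no real obstacle.
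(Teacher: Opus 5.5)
Your proof is correct and matches the paper's argument: you decompose $Z$ into linear sets, note that only vertical generators $(0,q)$ with $q>0$ can make a fibre infinite, take the gcd of their second coordinates for each linear set, and set $D$ to be the $\lcm$ of these gcds. The paper calls the union step ``straightforward'' and leaves it to the reader; your characterization of the eventual periods of $Y$ as exactly the multiples of $\ep(Y)$ fills in that step correctly. The reduction to at most two generators is unnecessary but harmless.
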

	\begin{proof}
		Write $Z=\bigcup_{j=1}^N L^{(j)}$, where $N\in\Z_{\geq0}$ and for each $1\leq j\leq N$,
		\[L^{(j)}=L\left(a_j,\{b_{j,i}\}_{i\in I_j}\right)\]
		is a linear subset ($a_j,b_{j,i}\in\Z_{\geq0}^2$ and $I_j$ is a finite index set). Then $Z_m=\bigcup_{j=1}^N L^{(j)}_m$ for all $m\geq0$.
		
		Fix $1\leq j\leq N$. For $i\in I_j$, write $b_{j,i}=\left(x_{j,i},y_{j,i}\right)$. Define \[I_j^\prime:=\{i\in I_j\colon x_{j,i}=0\text{ and }y_{j,i}>0\}.\]
		If $I_j^\prime=\emptyset$, then $\# L^{(j)}_m<\infty$ for all $m\geq0$. If $I_j^\prime\neq\emptyset$, then for every $m\in\Z_{\geq0}$, either $\# L^{(j)}_m<\infty$, or $\ep(L^{(j)}_m)$ divides
		\[D_j:=\gcd\left\lbrace y_{j,i}\colon i\in I_j^\prime\right\rbrace\in\Z_{>0}.\]
		Set $J=\left\lbrace 1\leq j\leq N\colon I_j^\prime\neq\emptyset\right\rbrace$. Define
		\[D:=\lcm\left\lbrace D_j\colon j\in J\right\rbrace\in\Z_{>0}\]
		if $J\neq\emptyset$, and define $D:=1$ if $J=\emptyset$. It is straightforward to verify that this $D$ satisfies the desired conclusion.
	\end{proof}
	\begin{Rem}
		Lemma~\ref{uniD} implies that the definable family $(Z_m)_{m\in\Z_{\geq0}}$ admits a uniform upper bound $D$ on the eventual periods.
	\end{Rem}
	
	\section{Coincidence of the measures of maximal entropy}\label{secsamemu}
	Let $f\in\C(z)$ be a rational map of degree $d\geq2$. The unique \emph{measure of maximal entropy} of $f$ on $\P^1(\C)$ is denoted by $\mu_f$; see \cite{Lyubich83,FLM83,Mane83} for details. The support of $\mu_f$ coincides with the \emph{Julia set} $J(f)\subseteq\P^1(\C)$ of $f$. If $f$ is a polynomial, then $J(f)=\supp(\mu_f)$ is a compact subset of $\C$ and $\mu_f$ is uniquely determined by $J(f)$.
	
	Let $f,g$ be two polynomials (or rational maps) of degree $\geq2$. In this section, we show that $\mu_f=\mu_g$ under suitable conditions, following an argument due to Yang--Zhong \cite[Proof of Proposition~4.5]{YZ26}. Then we recall classification theorems for pairs $(f,g)$ with $\mu_f=\mu_g$, for polynomials case and for rational maps. These results are important ingredients for our proofs of Theorems~\ref{thmrk2poly}~and~\ref{thmrat}.
	
	\subsection{The case of polynomials}\label{secsamemupoly}
	The following lemma allows us to only consider ``sufficiently large $m$ and $n$''.
	\begin{Lem}\label{lemSs}
		Let $f,g,c\in\C[z]$ be polynomials such that $d_1=\deg(f)\geq2$ and $d_2=\deg(g)\geq2$. Let $M_0,N_0\subseteq\Z_{\geq0}$ be finite sets. Then the set
		\[S_s:=\left\lbrace (m,n)\in S_{f,g,c}^2\colon m\in M_0\text{ or }n\in N_0\right\rbrace\]
		is semi-linear.
	\end{Lem}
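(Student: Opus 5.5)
By Corollary~\ref{semiclosed}, a finite union of semi-linear sets is semi-linear. Since $M_0$ and $N_0$ are finite, we have
\[S_s=\bigcup_{m_0\in M_0}\left(S_{f,g,c}^2\cap(\{m_0\}\times\Z_{\geq0})\right)\cup\bigcup_{n_0\in N_0}\left(S_{f,g,c}^2\cap(\Z_{\geq0}\times\{n_0\})\right).\]
By the symmetry $(f,m)\leftrightarrow(g,n)$, it therefore suffices to show that for each fixed $m_0\geq0$ the set
\[T_{m_0}:=\left\lbrace n\in\Z_{\geq0}\colon\exists\la\in\C,\ f^{\circ m_0}(\la)=g^{\circ n}(\la)=c(\la)\right\rbrace\]
is a finite union of arithmetic progressions. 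Then $\{m_0\}\times T_{m_0}$ is semi-linear: it is defined in $\Pr$ by $x=m_0\wedge y\in T_{m_0}$, so Theorem~\ref{thmGS} applies, and the union above is semi-linear.

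To treat $T_{m_0}$, fix $m_0$ and set $h:=f^{\circ m_0}\in\C[z]$, a polynomial of degree $d_1^{m_0}\geq1$. If $h-c$ is not identically zero, then any admissible $\la$ lies in the finite zero set $\Lambda:=\{\la\colon h(\la)=c(\la)\}$. Hence
\[T_{m_0}=\bigcup_{\la\in\Lambda}\left\lbrace n\colon g^{\circ n}(\la)=h(\la)\right\rbrace.\]
For a single point $\la$ and target $\beta=h(\la)$, the set $\{n\colon g^{\circ n}(\la)=\beta\}$ is empty, a singleton, or an arithmetic progression. This holds because if $g^{\circ n_1}(\la)=g^{\circ n_2}(\la)=\beta$ with $n_1<n_2$, then $\beta$ is periodic, and the set equals $n_1+p\Z_{\geq0}$, where $p$ is the exact period of $\beta$ and $n_1$ is the first hitting time. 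So $T_{m_0}$ is a finite union of arithmetic progressions in this case.

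The remaining case is $h\equiv c$, i.e.\ $c=f^{\circ m_0}$. Then
\[T_{m_0}=\{n\colon\exists\la,\ g^{\circ n}(\la)=f^{\circ m_0}(\la)\}.\]
For each $n$, the equation $g^{\circ n}(z)-f^{\circ m_0}(z)=0$ has a complex solution unless $g^{\circ n}-f^{\circ m_0}$ is a nonzero constant. Since $\deg g^{\circ n}=d_2^n\to\infty$ while $\deg f^{\circ m_0}$ is fixed, this difference is non-constant for all $n$ large enough, with at most finitely many exceptions. Therefore $T_{m_0}$ is cofinite, hence semi-linear. For the $n_0$-slices the argument is the same with the roles swapped, using $\deg f^{\circ m}\to\infty$, which holds because $d_1\geq2$.

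No step is a real obstacle; the only point needing care is separating the degenerate case $c=f^{\circ m_0}$ (respectively $c=g^{\circ n_0}$), where the set of $\la$ is infinite. That case is handled by the degree-growth argument rather than the finite-orbit argument, and it is exactly where the hypothesis $d_1,d_2\geq2$ is used.
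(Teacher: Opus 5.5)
Your proposal is correct and follows essentially the same route as the paper's proof. Like the paper, you slice $S_s$ into finitely many rows and columns and then split into two cases. When $f^{\circ m_0}=c$, degree growth makes the slice cofinite; it misses at most the single $n$ with $d_2^n=\deg(c)$. When $f^{\circ m_0}\neq c$, there are only finitely many $\lambda$, and each contributes an empty set, a singleton, or an arithmetic progression by orbit analysis.
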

	\begin{proof}
		Define $S_s^f=S_{f,g,c}^2\cap\left(M_0\times\Z_{\geq0}\right)$. For $m\in M_0$, set \[S^f_{m}=S_{f,g,c}^2\cap\left(\{m\}\times\Z_{\geq0}\right).\]
		Then $S_s^f=\bigcup_{m\in M_0} S^f_{m}$. Fix an arbitrary $m_0\in M_0$. We prove that $S^f_{m_0}$ is semi-linear by considering the following two cases.
		
		\smallskip
		
		\textbf{(1)} Assume that $f^{\circ m_0}=c$ in $\C[z]$. Since the equation $g^{\circ n}(\la)=c(\la)$ always has a solution in $\C$ whenever $d_2^n\neq\deg(c)$, we conclude that 
		\[S^f_{m_0}=\{(m_0,n)\colon n\in\Z_{\geq0}\text{ such that }\exists\la\in\C,\,g^{\circ n}(\la)=c(\la)\}\]
		equals $\{m_0\}\times\Z_{\geq0}$ or $\{m_0\}\times\left(\Z_{\geq0}\setminus\{\log_{d_2}\deg(c)\}\right)$. (Here we set $\deg(0)=0$ and $\log_{d_2} 0=-\infty$.) Hence $S^f_{m_0}$ is semi-linear.
		
		\smallskip
		
		\textbf{(2)} Assume that $f^{\circ m_0}\neq c$ in $\C[z]$. Then
		\[\La:=\left\lbrace \la\in\C\colon f^{\circ m_0}(\la)=c(\la)\right\rbrace\]
		is finite (possibly empty). For every $\la\in\C$ and every polynomial $h\in\C[z]$, we claim that
		\begin{equation}\label{Shla}
			S_{h,c}(\la):=\left\lbrace m\in\Z_{\geq0}\colon h^{\circ m}(\la)=c(\la)\right\rbrace\text{ is semi-linear.}
		\end{equation}
		If $c(\la)$ is not in the forward $h$-orbit $O_h(\la)$ of $\la$, then $S_{h,c}(\la)=\emptyset$. If $c(\la)\in O_h(\la)$, then $S_{h,c}(\la)\subseteq\Z_{\geq0}$ is an arithmetic progression by considering whether $\la$ is preperiodic for $h$. Thus, $S_{h,c}(\la)$ is semi-linear for all $\la\in\C$ and all $h\in\C[z]$. In particular, we deduce that
		\[S^f_{m_0}=\{m_0\}\times\left(\bigcup_{\la_0\in\La}S_{g,c}(\la_0)\right)\]
		is semi-linear.
		
		\smallskip
		
		We conclude that $S_s^f=\bigcup_{m\in M_0} S^f_{m}$ is semi-linear. Similarly, we obtain that $S_s^g:=S_{f,g,c}^2\cap\left(\Z_{\geq0}\times N_0\right)$ is semi-linear. Therefore, $S_s=S_s^f\cup S_s^g$ is semi-linear.
	\end{proof}
	
	\begin{Def}\label{Q2poly}
		Let $f,g,c\in\C[z]$ be polynomials such that $\deg(f)\geq2$ and $\deg(g)\geq2$. Define
		\[Q_{f,g,c}^2:=\left\lbrace \la\in\C\colon \exists (m,n)\in\Z_{\geq0}^2, f^{\circ m}(\la)=g^{\circ n}(\la)=c(\la),\,f^{\circ m}\neq c,\,g^{\circ n}\neq c\right\rbrace.\]
	\end{Def}
	Note that there exists at most one $m\in\Z_{\geq0}$ such that $f^{\circ m}=c$ (in $\C[z]$) because $\deg(f)\geq2$. Similarly, there exists at most one $n\in\Z_{\geq0}$ such that $g^{\circ n}=c$.
	
	First, we make a reduction by the following observation, which is essentially due to Yang and Zhong \cite[Proof of Theorem~4.8]{YZ26}:
	\begin{Prop}\label{Q2polyfinite}
		Let $f,g,c\in\C[z]$ be polynomials such that $\deg(f)\geq2$ and $\deg(g)\geq2$. If $\# Q_{f,g,c}^2<\infty$, then $S_{f,g,c}^2$ is semi-linear.
	\end{Prop}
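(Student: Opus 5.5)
Let $m_c$ denote the unique $m\geq0$ with $f^{\circ m}=c$, if it exists, and $n_c$ the unique $n\geq0$ with $g^{\circ n}=c$, if it exists (uniqueness holds since $\deg(f),\deg(g)\geq2$, as noted after Definition~\ref{Q2poly}). Put $M_0=\{m_c\}$ (or $\emptyset$) and $N_0=\{n_c\}$ (or $\emptyset$). The plan is to split
\[S_{f,g,c}^2=S_s\cup\left(S_{f,g,c}^2\cap\left(\Z_{\geq0}\setminus M_0\right)\times\left(\Z_{\geq0}\setminus N_0\right)\right),\]
where $S_s$ is the set from Lemma~\ref{lemSs}, which is semi-linear by that lemma. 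By Corollary~\ref{semiclosed}, it then suffices to show that the second piece, call it $S^\prime$, is semi-linear.

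For $(m,n)\in S^\prime$ we have $f^{\circ m}\neq c$ and $g^{\circ n}\neq c$. So any witness $\la$ with $f^{\circ m}(\la)=g^{\circ n}(\la)=c(\la)$ lies in $Q_{f,g,c}^2$ by definition. Conversely, any $\la\in Q_{f,g,c}^2$ satisfying these equations for some $(m,n)\notin M_0\times\Z_{\geq0}\cup\Z_{\geq0}\times N_0$ gives $(m,n)\in S^\prime$. Hence
\[S^\prime=\bigcup_{\la\in Q_{f,g,c}^2}\left(S_{f,c}(\la)\times S_{g,c}(\la)\right)\cap\left(\Z_{\geq0}\setminus M_0\right)\times\left(\Z_{\geq0}\setminus N_0\right),\]
using the notation \eqref{Shla}. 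Here the point is that, for a fixed $\la$, the conditions on $m$ and on $n$ decouple.

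In the proof of Lemma~\ref{lemSs} we showed that each $S_{h,c}(\la)$ is semi-linear; in fact it is empty, a singleton, or an arithmetic progression. A product of two semi-linear subsets of $\Z_{\geq0}$ is semi-linear in $\Z_{\geq0}^2$. One can see this directly, since $L(a,P)\times L(a^\prime,P^\prime)=L\bigl((a,a^\prime),(P\times\{0\})\cup(\{0\}\times P^\prime)\bigr)$ and products distribute over finite unions, or alternatively via Theorem~\ref{thmGS}. Intersecting with the semi-linear set $(\Z_{\geq0}\setminus M_0)\times(\Z_{\geq0}\setminus N_0)$ preserves semi-linearity by Corollary~\ref{semiclosed}. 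Since $Q_{f,g,c}^2$ is finite by hypothesis, $S^\prime$ is a finite union of semi-linear sets, hence semi-linear. Taking the union with $S_s$ finishes the argument.

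There is no serious obstacle. The only care needed is the bookkeeping of the exceptional indices $m_c$ and $n_c$. For these indices the witnessing $\la$ need not lie in $Q_{f,g,c}^2$, and possibly infinitely many $\la$ occur. This is exactly why those rows and columns are handled separately by Lemma~\ref{lemSs} rather than through the finiteness of $Q_{f,g,c}^2$.
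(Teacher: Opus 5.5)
Your proposal is correct and follows essentially the same route as the paper: handle the exceptional rows and columns $S_s$ by Lemma~\ref{lemSs}, and write the remainder as a finite union over $\la\in Q_{f,g,c}^2$ of the products $S_{f,c}(\la)\times S_{g,c}(\la)$. The only difference is cosmetic. The paper does not intersect with $(\Z_{\geq0}\setminus M_0)\times(\Z_{\geq0}\setminus N_0)$, since any extra pairs in those products already lie in $S_{f,g,c}^2$.
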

	\begin{proof}
		Recall that
		\[S_{f,g,c}^2:=\left\lbrace(m,n)\in\Z_{\geq0}^2\colon \exists\la\in\C, f^{\circ m}(\la)=g^{\circ n}(\la)=c(\la)\right\rbrace.\]
		For every $\la\in Q_{f,g,c}^2$, define
		\[S^2(\la):=\left\lbrace (m,n)\in\Z_{\geq0}^2\colon f^{\circ m}(\la)=g^{\circ n}(\la)=c(\la)\right\rbrace.\]
		Set $A_f=\left\lbrace m\in\Z_{\geq0}\colon f^{\circ m}=c\right\rbrace$ and $A_g=\left\lbrace n\in\Z_{\geq0}\colon g^{\circ n}=c\right\rbrace$. Then
		\[\# A_f\leq1\quad\text{and}\quad\# A_g\leq1.\]
		Note that we have
		\begin{equation}\label{S2decomp}
			S_{f,g,c}^2=S_s\cup\bigcup_{\la\in Q_{f,g,c}^2}S^2(\la),
		\end{equation}
		where
		\[S_s=\left\lbrace (m,n)\in S_{f,g,c}^2\colon m\in A_f\text{ or }n\in A_g\right\rbrace.\]
		
		By Lemma~\ref{lemSs}, $S_s$ is semi-linear. For $\la_0\in Q_{f,g,c}^2$, we have that
		\[S^2(\la_0)=S_{f,c}(\la_0)\times S_{g,c}(\la_0)\]
		is semi-linear by \eqref{Shla} and Theorem~\ref{thmGS}. From the decomposition \eqref{S2decomp} and the assumption that $\# Q_{f,g,c}^2<\infty$, we conclude that $S_{f,g,c}^2$ is semi-linear.
	\end{proof}
	
	\begin{Rem}
		We add the conditions $f^{\circ m}\neq c$ and $g^{\circ n}\neq c$ in the definition of $Q_{f,g,c}^2$ in order to obtain Proposition~\ref{poly2musame}.
	\end{Rem}
	
	For a polynomial $h\in\C[z]$, the set of $h$-preperiodic points in $\C$ is
	\[\PrePer(h,\C):=\left\lbrace z_0\in\C\colon\#O_h(z_0)<\infty\right\rbrace.\]
	We prove the following proposition, whose main idea is contained in \cite[Proof of Proposition~4.5]{YZ26}.
	\begin{Prop}\label{poly2musame}
		Let $f,g,c\in\C[z]$ be polynomials such that $\deg(f)\geq2$ and $\deg(g)\geq2$. Suppose that $\# Q_{f,g,c}^2=\infty$. Then $\PrePer(f,\C)=\PrePer(g,\C)$; hence, we have $\mu_f=\mu_g$ and $J(f)=J(g)$.
	\end{Prop}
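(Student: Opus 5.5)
The plan is to follow the arithmetic-equidistribution strategy of Yang--Zhong. The key point is that every $\la\in Q^2_{f,g,c}$ has small height for suitable canonical heights, and that infinitely many such points force the two canonical heights to coincide.

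\textbf{Step 1: specialization to a number field.} Choose a finitely generated field $K\subseteq\C$ containing all coefficients of $f,g,c$. By a standard specialization argument (as in \cite{YZ26}), the relation $\PrePer(f,\C)=\PrePer(g,\C)$ descends to $\overline{\Q}$. Concretely, the set of parameters (the coefficients) for which the conclusion fails is controlled by the algebraic conditions defining $Q^2$. Alternatively, one can work directly with Moriwaki heights or in the function-field setting. I would first treat the case where all coefficients lie in a number field and then handle the general case by specialization.

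\textbf{Step 2: small height.} For $\la\in Q^2_{f,g,c}$, pick $(m,n)$ with $f^{\circ m}(\la)=g^{\circ n}(\la)=c(\la)$, $f^{\circ m}\neq c$ and $g^{\circ n}\neq c$. Write $d_1=\deg f$, $d_2=\deg g$, and let $\hat h_f,\hat h_g$ be the canonical heights. Then
\[
d_1^m\hat h_f(\la)=\hat h_f(c(\la))\le \deg(c)\,\hat h_f(\la)+O(h(\la))+O(1),
\]
and $\hat h_f$ is comparable to the Weil height $h$.

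\emph{Case A: $m$ large.} Suppose $d_1^m>\deg(c)$ exceeds the implied constants. Then $h(\la)$ is bounded, and since $\la$ is algebraic of bounded degree, $\la$ lies in a finite set. Hence infinitely many $\la$ force $m$ (and likewise $n$) to take values in a bounded set along a subsequence.

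\emph{Case B: $m,n$ bounded.} Passing to a subsequence, fix $(m,n)$. Then $\la$ ranges over roots of the fixed nonzero polynomials $f^{\circ m}-c$ and $g^{\circ n}-c$, which is a finite set. This contradicts $\#Q^2=\infty$.

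So the naive argument gives finiteness, which cannot be right: $Q^2$ is defined over $\C$, so $\la$ need not have bounded degree. I therefore need to redo Case A without the bounded-degree assumption. The correct bookkeeping is as follows. If $\deg c<d_1^m$, then $f^{\circ m}(\la)=c(\la)$ shows $\hat h_f(\la)\le C/(d_1^m-\deg c)$. Thus a sequence of distinct $\la$ with $m\to\infty$ has $\hat h_f(\la)\to0$, and likewise $\hat h_g(\la)\to0$ when $n\to\infty$. If instead $m$ stays bounded along a subsequence, then $\la$ is a root of one of finitely many nonzero polynomials, which gives only finitely many points. Hence infinitely many $\la$ give a generic sequence (all but finitely many in any proper subvariety are avoided) that is small for both $\hat h_f$ and $\hat h_g$.

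\textbf{Step 3: equidistribution.} Applying the arithmetic equidistribution theorem (Baker--Rumely, Chambert-Loir, Favre--Rivera-Letelier) at the archimedean place and at every place, the Galois orbits of the $\la$ equidistribute to $\mu_f$ and to $\mu_g$, place by place. Thus the adelic measures coincide, and therefore $\hat h_f=\hat h_g$. It follows that
\[
\PrePer(f)=\{\hat h_f=0\}=\{\hat h_g=0\}=\PrePer(g).
\]
For polynomials, $J(f)$ is the boundary of the closure of the set of preperiodic points (the repelling cycles are dense in the Julia set), so $J(f)=J(g)$ and hence $\mu_f=\mu_g$, the equilibrium measure of the common Julia set.

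\textbf{Main obstacle.} The main obstacle is the transcendental case. There I would use Yang--Zhong's approach verbatim: choose a finitely generated $\Q$-algebra model, specialize at a Zariski-dense set of $\overline{\Q}$-points where $Q^2$ remains infinite, and use that equality of preperiodic sets is a closed condition. Alternatively, one can use Dujardin--Favre type equidistribution over function fields together with Yuan--Zhang's heights over finitely generated fields.
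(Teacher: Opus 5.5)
\textbf{Verdict: there is a genuine gap, in the case of transcendental coefficients.}

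When all coefficients are algebraic, your Steps~2--3 (after your self-correction) are correct and match the paper's Case~1. Distinctness of the $\la_j$ forces $m_j,n_j\to\infty$, hence $\hat{h}_f(\la_j),\hat{h}_g(\la_j)\to0$. Equidistribution at every place then gives $\hat{h}_f=\hat{h}_g$; the paper cites \cite[Proposition~2.5]{NZ25} for this step.

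The gap is the case where the coefficients of $f,g,c$ are not all algebraic. You delegate it to a specialization argument resting on the claim that equality of preperiodic sets is a closed condition. That claim is false. Take $f=z^2$ and $g_t=tz^2$ over $\G_m$. Conjugating by $z\mapsto tz$ gives $\PrePer(g_t,\C)=\{0\}\cup t^{-1}U(\C)$. So $\PrePer(f,\C)=\PrePer(g_t,\C)$ holds exactly for $t\in U(\C)$, which is a Zariski-dense set of $\overline{\Q}$-points, yet it fails at every transcendental $t$.

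You also give no reason why $Q^2$ should remain infinite at a dense set of specializations. Distinct $\la_j$ can collide after specialization, and since the $\overline{\Q}$-points of the parameter space form a countable set, you cannot simply choose a ``very general'' one. The closing mention of function-field equidistribution or Yuan--Zhang heights is a pointer, not an argument.

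\textbf{How the paper handles this case.} It never specializes. It takes $K$ to be the field generated over $\overline{\Q}$ by the coefficients and uses the geometric Weil height of $K/\overline{\Q}$. Your Step~2 goes through unchanged there and yields $\hat{h}_f=\hat{h}_g$.

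The missing idea is that over a function field, $\hat{h}_f(x)=0$ does not imply $x$ is preperiodic when $f$ is isotrivial. The paper splits accordingly:
\begin{itemize}
\item If $f$ is non-isotrivial, \cite{Bak09} shows that $g$ is non-isotrivial too, and that height zero characterizes preperiodic points for both maps.
\item If $f$ is isotrivial, conjugate so that $f\in\overline{\Q}[z]$. By \cite{BD11}, $\mu_{g,v}=\mu_{f,v}$ is the Dirac mass at the Gauss point at every place $v$, so $g\in\overline{\Q}[z]$. The preliminary reduction to $f^{\circ m}\neq g^{\circ n}$ for all $m,n\geq1$ then forces $\la_j\in\overline{\Q}$. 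Hence $c$ is defined over $\overline{\Q}$, and one is back in the number-field case.
\end{itemize}
Using Moriwaki heights over the finitely generated field of coefficients, together with Yuan--Zhang's results, is a possible alternative, since Northcott holds for such heights. But that route has to be carried out, not just named.

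\textbf{A smaller slip.} $J(f)$ is not the boundary of $\overline{\PrePer(f,\C)}$: for $z^2$ that boundary is $\{0\}\cup\partial\D$. Use instead that $J(f)$ is the set of accumulation points of $\PrePer(f,\C)$, or cite \cite[Theorem~1.4]{YZ21} as the paper does.
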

	\begin{proof}
		If $\PrePer(f,\C)=\PrePer(g,\C)$, then $\mu_f=\mu_g$ by \cite[Theorem~1.4]{YZ21}, and we have
		\[J(f)=\supp(\mu_f)=\supp(\mu_g)=J(g).\]
		It suffices to prove $\PrePer(f,\C)=\PrePer(g,\C)$.
		
		If $f^{\circ m}=g^{\circ n}$ for some integers $m,n\geq1$, then clearly $\PrePer(f,\C)=\PrePer(g,\C)$. Hence we may assume that $f^{\circ m}\neq g^{\circ n}$ in $\C[z]$ for all integers $m,n\geq1$. 
		
		Set $A_f=\left\lbrace m\in\Z_{\geq0}\colon f^{\circ m}=c\right\rbrace$ and $A_g=\left\lbrace n\in\Z_{\geq0}\colon g^{\circ n}=c\right\rbrace$, which are sets of cardinality at most one. By the assumption that $\# Q_{f,g,c}^2=\infty$, we can take a sequence $(\la_j)_{j=1}^\infty\subseteq Q_{f,g,c}^2$ of pairwise distinct elements, a sequence $(m_j)_{j=1}^\infty\subseteq\Z_{\geq0}\setminus A_f$, and a sequence $(n_j)_{j=1}^\infty\subseteq\Z_{\geq0}\setminus A_g$ such that for every $j\geq1$, we have
		\[f^{\circ m_j}(\la_j)=g^{\circ n_j}(\la_j)=c(\la_j).\]
		
		We claim that
		\[\lim_{j\to\infty}m_j=\lim_{j\to\infty}n_j=\infty.\]
		If the sequence $(m_j)_{j=1}^\infty$ does not tend to infinity, then there exist $m^\prime\in\Z_{\geq0}\setminus A_f$ and an infinite subset $J\subseteq\Z_{>0}$ such that $m_j=m^\prime$ for all $j\in J$. Thus, every number in the infinite set $\left\lbrace \la_j\colon j\in J\right\rbrace$ is a solution to the equation $f^{\circ m^\prime}(\la)=c(\la)$ ($\la\in\C$), which is a contradiction to $f^{\circ m^\prime}\neq c$. Hence $\lim_{j\to\infty}m_j=\infty$, and similarly $\lim_{j\to\infty}n_j=\infty$.
		
		\medskip
		
		Fix an algebraic closure $\overline{\Q}$ of $\Q$ in $\C$. Let $K$ be the subfield of $\C$ generated by all coefficients of $f$, $g$, and $c$ over $\overline{\Q}$, and $\overline{K}$ the algebraic closure of $K$ in $\C$. Then $K/\overline{\Q}$ is a finitely generated extension. In the following, let $h:\overline{\Q}\to\R_{\geq0}$ be the absolute logarithmic height over number fields if $K=\overline{\Q}$, and let $h:\overline{K}\to\R_{\geq0}$ be the standard Weil height associated to the function field $K/\overline{\Q}$ if $K\neq\overline{\Q}$. For every polynomial $c\in\C[z]$,
		\[h\circ c-\deg(c)\cdot h=O(1)\]
		is a bounded function, where we set $\deg(0)=0$. See \cite{HtBook} for more information about the height function $h$.
		
		For a polynomial $\phi\in\overline{K}[z]$ of degree $d\geq2$, let $\hat{h}_\phi:\overline{K}\to\R_{\geq0}$ be the canonical height function associated with $\phi$ as in \cite{CallSilverman}, which is given by
		\[\hat{h}_\phi(z_0)=\lim_{k\to\infty}\frac{1}{d^k}h(\phi^{\circ k}(z_0)),\quad z_0\in\overline{K}.\]
		It satisfies
		\[\hat{h}_\phi\circ\phi=d\hat{h}_\phi\quad\text{and}\quad\hat{h}_\phi-h=O(1).\]
		
		\medskip
		
		Set $d_1=\deg(f)\geq2$ and $d_2=\deg(g)\geq2$. After passing to subsequences, we may assume that $d_1^{m_j},d_2^{n_j}>\deg(c)$ for all $j\geq1$. For every $j\in\Z_{>0}$, since $f^{\circ m_j}\neq g^{\circ n_j}$ are two different polynomials with $K$-coefficients, from $f^{\circ m_j}(\la_j)=g^{\circ n_j}(\la_j)$ we get $\la_j\in\overline{K}$. Similarly, we have $\PrePer(f,\C)\cup\PrePer(g,\C)\subseteq\overline{K}$. We prove that
		\begin{equation}\label{smallpts}
			\lim_{j\to\infty}\hat{h}_f(\la_j)=\lim_{j\to\infty}\hat{h}_g(\la_j)=0.
		\end{equation}
		Let $j\geq1$. Then
		\[\hat{h}_f(\la_j)=\frac{1}{d_1^{m_j}}\hat{h}_f(f^{\circ m_j}(\la_j))=\frac{1}{d_1^{m_j}}\hat{h}_f(c(\la_j))=\frac{1}{d_1^{m_j}}(\deg(c)\hat{h}_f(\la_j)+O(1)),\]
		where the $O(1)$ is independent of $j$. Hence
		\[\hat{h}_f(\la_j)=\frac{O(1)}{d_1^{m_j}-\deg(c)}\]
		tends to $0$ as $j\to\infty$, because $\lim_{j\to\infty}m_j=\infty$. Similarly, we obtain \[\lim_{j\to\infty}\hat{h}_g(\la_j)=0.\]
		
		By \eqref{smallpts} and \cite[Proposition~2.5]{NZ25}, we conclude that
		\begin{equation}\label{htsame}
			\hat{h}_f=\hat{h}_g.
		\end{equation}
		
		\medskip
		
		\textbf{Case 1.} Suppose that $K=\overline{\Q}$. By \cite[Theorem~3.22]{Silverman2007}, we have
		\[\PrePer(f,\C)=(\hat{h}_f)^{-1}(0)=(\hat{h}_g)^{-1}(0)=\PrePer(g,\C).\]
		
		\textbf{Case 2.} Suppose that $K\neq\overline{\Q}$ and $f\in K[z]$ is non-isotrivial. Then $g\in K[z]$ is also non-isotrivial by \cite[Theorem~1.6]{Bak09} and \eqref{htsame}. Therefore, by \cite[Corollary~1.8]{Bak09}, we obtain
		\[\PrePer(f,\C)=(\hat{h}_f)^{-1}(0)=(\hat{h}_g)^{-1}(0)=\PrePer(g,\C).\]
		
		\textbf{Case 3.} Suppose that $K\neq\overline\Q$ and $f\in K[z]$ is isotrivial. After conjugacy and a finite extension of $K$, we may assume that $f\in\overline{\Q}[z]$. By \cite[Theorem~5.3]{BD11} and \eqref{smallpts}, for every place $v$ of the function field $K$, we have $\mu_{f,v}=\mu_{g,v}$, where $\mu_{f,v}$ (resp. $\mu_{g,v}$) is the canonical measure associated with $f$ on the Berkovich analytification $\P^1_{\Berk,v}$ of the projective line at $v$. Since $f\in\overline{\Q}[z]$, the canonical measure $\mu_{f,v}$ is the Dirac measure supported at the Gauss point of $\P^1_{\Berk,v}$ by \cite[Proposition~5.4]{BD11}. Then $\mu_{g,v}$ is also the Dirac measure supported at the Gauss point of $\P^1_{\Berk,v}$ for every place $v$ of $K$; hence, $g$ is also defined over $\overline{\Q}$ by \cite[Proposition~5.4]{BD11}. For every $j\geq1$, we have $\la_j\in\overline{\Q}$ because $f^{\circ m_j}(\la_j)= g^{\circ n_j}(\la_j)$ but $f^{\circ m_j}\neq g^{\circ n_j}$; then $c(\la_j)=f^{\circ m_j}(\la_j)\in\overline{\Q}$. Let
		\[C=\left\lbrace (z,c(z)):z\in \P^1_\C\right\rbrace\subseteq(\P^1_\C)^2\]
		be the graph of $c:\P^1_\C\to\P^1_\C$, which is an irreducible curve. Then
		\[\left\lbrace (\la_j,c(\la_j))\colon j\geq1\right\rbrace\]
		is a Zariski dense subset of $\overline{\Q}$-points in $C$; hence, $C$ and $c$ are defined over $\overline{\Q}$. We are back to case 1.
	\end{proof}
	
	\begin{Rem}
		Let $L$ be a finitely generated subfield of $\C$, and $f,g\in L[z]$ be two polynomials of degree $\geq2$. In fact, if $\PrePer(f,\C)=\PrePer(g,\C)$, then the canonical measures associated with $f$ and $g$ are equal at every place of $L$ by \cite[Theorem~1.4]{YZ21}. See also \cite[Remark~3.7]{Tits}.
		
		In the proof of Proposition~\ref{poly2musame}, in the function field case, we choose the base field to be $\overline{\Q}$ rather than $\Q$ to make the constant field of $K$ be the algebraically closed field $\overline{\Q}$.
	\end{Rem}
	
	\begin{Rem}
		It seems that the proofs of \cite[Propositions~4.2~and~4.3]{NZ25} and the proof of \cite[Proposition~4.5]{YZ26} may omit some details in the isotrivial case of function fields, which corresponds to case 3 in the proof of Proposition~\ref{poly2musame}. Once it is known that both $f$ and $g$ are isotrivial, it is necessary to argue that $f$ and $g$ can be defined over $\overline{\Q}$ simultaneously by conjugacy over $\overline{K}$. As case 3 of the proof of Proposition~\ref{poly2musame} showed, this problem can be solved based on the techniques in \cite{BD11}.
	\end{Rem}
	
	\medskip
	
	Polynomials with the same measure of maximal entropy have powerful descriptions. Let $f,g\in\C[z]$ be two polynomials of degrees $d_1,d_2\geq2$, respectively. By \cite{Zdunik90}, if $J(f)=J(g)$, then:
	\begin{itemize}
		\item $f$ is non-exceptional if and only if $g$ is non-exceptional;
		\item $f$ is conjugate to $z^{d_1}$ if and only if $g$ is conjugate to $z^{d_2}$;
		\item $f$ is conjugate to $\pm T_{d_1}$ if and only if $g$ is conjugate to $\pm T_{d_2}$.
	\end{itemize}
	It is well known that $\mu_f=\mu_g$ if and only if $J(f)=J(g)$, for polynomials. However, this result does not hold for rational maps. The following theorem gives a classification of non-exceptional polynomials with the same Julia sets, which is due to Schmidt and Steinmetz \cite{SS95} (see also \cite[Theorem~3.8]{FG22}):
	\begin{Thm}\label{polysamemu}
		Let $f$ be a non-exceptional polynomial of degree $\geq2$. Then there exists a non-exceptional polynomial $h\in\C[z]$ of degree $\geq2$ such that for every $g\in\C[z]$ of degree $\geq2$, the following statements are equivalent:
		\begin{itemize}
			\item $J(g)=J(f)$ (equivalently, $\mu_f=\mu_g$);
			\item $g=\sigma\circ h^{\circ k}$ for some linear polynomial $\sigma\in\Aut(J(f))$ and some integer $k\geq1$, where
			\[\Aut(J(f)) = \left\lbrace \sigma\in\C[z] \colon \deg(\sigma)=1 \text{ and } \sigma(J(f))=J(f) \right\rbrace\]
			is the \emph{automorphism group of $J(f)$}.
		\end{itemize}
	\end{Thm}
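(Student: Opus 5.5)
The plan is to work with Green functions and Böttcher coordinates at $\infty$, taking $h$ to be an element of minimal degree in the semigroup of polynomials with Julia set $J:=J(f)$. After an affine change of coordinates, assume $f$ is monic and centered. Let $G$ be the Green function of $J$, i.e.\ of the filled set $K(f)$; since $\C\setminus K$ is the unbounded component of $\C\setminus J$, $G$ depends only on $J$. A polynomial $g$ of degree $d\geq2$ has $J(g)=J$ if and only if $G\circ g=d\cdot G$. One direction holds because $J(g)=J$ forces $g^{-1}(\C\setminus K)=\C\setminus K$ and the harmonic function $G\circ g/d$ has the same boundary values and logarithmic pole. The other holds because $G\circ g=dG$ identifies the escaping set of $g$ with $\C\setminus K$.

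Consequently $\mathcal{S}:=\{g\colon \deg g\geq2,\ J(g)=J\}$ is a semigroup under composition. It is stable under $g\mapsto\sigma\circ g$ for $\sigma\in\Aut(J)$, since $G\circ\sigma=G$ for every such $\sigma$. Because $f$ is non-exceptional, $J$ is neither a circle nor a segment, so $\Aut(J)$ is a finite group of rotations about the centroid of $K$; I would take this as standard. Choose $h\in\mathcal{S}$ of minimal degree $\delta\geq2$. Every element of $\mathcal{S}$ is non-exceptional by the Zdunik dichotomy recalled above, so $h$ is too. The implication ``$g=\sigma\circ h^{\circ k}\Rightarrow J(g)=J$'' is then immediate.

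For the converse I would first settle the case of equal degrees, using the Böttcher coordinate $\varphi$ of $J$. This is the conformal map from a neighbourhood of $\infty$ to a neighbourhood of $\infty$ with $\log|\varphi|=G$ and $\varphi(z)\sim az$, determined by $J$ up to a unimodular constant. For $g_1,g_2\in\mathcal{S}$ of the same degree $d$, both $\varphi\circ g_i\circ\varphi^{-1}$ are holomorphic near $\infty$ and satisfy $\log|\cdot|=d\log|w|$. Hence they equal $c_i w^d$ with $|c_i|=1$, and so
\[g_1=\varphi^{-1}\circ(c_1c_2^{-1}\cdot)\circ\varphi\circ g_2 .\]
Set $\tau:=\varphi^{-1}\circ(c\,\cdot)\circ\varphi$ with $c=c_1c_2^{-1}$. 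It preserves $G$ near $\infty$, so it extends along the level curves of $G$ to a conformal automorphism of $\C\setminus K$ fixing $\infty$. Writing $\tau\circ g_2=g_1$ shows that $\tau$ is locally the entire function $g_1\circ g_2^{-1}$ on a neighbourhood of infinity, and I expect this to force $\tau$ to extend to an entire injective map, hence an affine map preserving $J$, i.e.\ $\tau\in\Aut(J)$. Making this extension rigorous is the first delicate point.

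The main obstacle is showing that $\deg g$ is a power of $\delta$ and that $g$ factors through $h$. My first attempt is a Euclidean-type descent on degrees. Beardon's theorem gives $g\circ h=\sigma\circ h\circ g$ for some $\sigma\in\Aut(J)$, so $g$ and $h$ commute up to the finite group $\Aut(J)$. I then pass to the quotient: with $N=\#\Aut(J)$ and the centroid moved to $0$, $\Aut(J)=\{z\mapsto\zeta z\colon\zeta^N=1\}$, and each element of $\mathcal{S}$ descends to a polynomial in $z^N$ up to rotation. The descended maps genuinely commute, so the Julia/Ritt theory of commuting non-exceptional polynomials yields $g^{\circ a}$ and $h^{\circ b}$ with equal degree, and hence $\deg g=\delta^{k}$ by minimality of $\delta$ together with a gcd argument. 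Alternatively, the Böttcher relation forces $\deg g$ and $\delta$ to be multiplicatively dependent, so both are powers of a common $e$; one then shows $e$ is itself realized by an element of $\mathcal{S}$, which gives $e=\delta$. Once $\deg g=\delta^k$, the equal-degree case applied to $g$ and $h^{\circ k}$ gives $g=\sigma\circ h^{\circ k}$ with $\sigma\in\Aut(J)$, completing the equivalence.
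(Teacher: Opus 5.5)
The paper gives no proof of this statement. It is quoted from Schmidt--Steinmetz \cite{SS95} (see also \cite[Theorem~3.8]{FG22}), so your proposal has to stand on its own. Its skeleton is viable: the characterization $G\circ g=\deg(g)\,G$, the semigroup $\mathcal{S}$, a minimal-degree $h$, B\"ottcher coordinates, and multiplicative dependence of degrees from a rigidity theorem. But the step that actually carries the theorem is missing.

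Once you know $\deg g=e^{p}$ and $\delta=e^{q}$ with $\gcd(p,q)=1$, minimality of $\delta$ gives nothing by itself. $\mathcal{S}$ is only closed under composition, so degrees only multiply, and a ``gcd argument'' needs a \emph{division} step that produces an element of $\mathcal{S}$ of degree $e$. That is exactly your sentence ``one then shows $e$ is itself realized by an element of $\mathcal{S}$'', which is asserted, not proved.

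There are two related problems. First, your alternative claim that the B\"ottcher relation forces $\deg g$ and $\delta$ to be multiplicatively dependent is false as stated: $z^{2}$ and $z^{3}$ share the B\"ottcher coordinate $\varphi(z)=z$ of $\partial\D$. So non-exceptionality must be used somewhere. Second, in your first route it enters through Ritt's theorem applied to the descended maps $\tilde g,\tilde h$, but you never check that these are non-exceptional. The simplest fix is Levin--Przytycki (Theorem~\ref{ratsamemu}), which gives $\deg(g)^{l}=\delta^{k}$ directly. This removes any need for Beardon's commutation relation or the quotient by $\Aut(J)$.

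The missing division step, and the extension of $\tau$ that you left at ``I expect'', both follow from one elementary observation. Let $g_1,g_2\in\mathcal{S}$ with $\deg g_1=e\deg g_2$. From $\varphi\circ g_i=c_i\varphi^{\deg g_i}$ with $|c_i|=1$ you get $\varphi\circ g_1=c\,(\varphi\circ g_2)^{e}$. Hence $Q:=\varphi^{-1}\circ(w\mapsto c\,w^{e})\circ\varphi$ is holomorphic near $\infty$, and $Q\circ g_2=g_1$ there.

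Split the Laurent expansion of $Q$ at $\infty$ as $Q=Q_{+}+Q_{-}$, with $Q_{+}$ a polynomial and $Q_{-}(w)\to0$. Then $Q_{-}\circ g_2=g_1-Q_{+}\circ g_2$ is a polynomial tending to $0$, hence zero. So $Q_{-}=0$, and $Q$ is a polynomial of degree $e$ with $g_1=Q\circ g_2$ on $\C$. Moreover $G\circ Q\circ g_2=G\circ g_1=e\,G\circ g_2$, and $g_2$ is surjective, so $G\circ Q=eG$. Thus $J(Q)=J$ if $e\geq2$, and $Q\in\Aut(J)$ if $e=1$.

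No continuation of $\tau$ along level curves of $G$ is needed. That continuation is problematic anyway when $K$ is disconnected, since $\varphi$ does not extend past escaping critical points. The argument then finishes as follows:
\begin{enumerate}
\item With $e=1$, the observation is your equal-degree case.
\item If $q\geq2$, choose $u,v\geq1$ with $pu-qv=1$ and apply it to $g^{\circ u}$ and $h^{\circ v}$. This yields an element of $\mathcal{S}$ of degree $e<\delta$, contradicting minimality, so $q=1$.
\item Applying it once more with $e=1$ to $g$ and $h^{\circ p}$ gives $g=\sigma\circ h^{\circ p}$ with $\sigma\in\Aut(J)$.
\end{enumerate}
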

	
	\begin{Rem}\label{AutJ}
		Let $d=\deg(h)\geq2$. After conjugacy, we may assume that $h$ is monic and centered, i.e., $h$ is of the form
		\[h(z)=z^d+a_{d-2}z^{d-2}+\cdots+a_0.\]
		Write
		\[h(z)=z^r R(z^s),\]
		where $r\geq0$ and $s\geq1$ are integers, and $R\in\C[z]$ is a polynomial such that $R(0)\neq0$ and $R$ cannot be further written as $R_0(z^l)$ for some polynomial $R_0$ and some integer $l\geq2$. Note that $R\notin\C$ because $h$ is non-exceptional. The pair $(r,s)$ is uniquely determined by $h$. In this case, we have
		\[\Aut(J(h))=\left\lbrace \zeta z\colon \zeta\in\C,\,\zeta^s=1\right\rbrace.\]
		See \cite[Proposition~3.9]{FG22} for a proof.
	\end{Rem}
	
	\begin{Def}\label{UC}
		Define
		\[U(\C)=\left\lbrace\zeta\in\C\colon \zeta^m=1\text{ for some }m\in\Z_{>0}\right\rbrace,\]
		which is the group of all roots of unity in $\C$. For every $s\in\Z_{>0}$, define
		\[U_s(\C)=\left\lbrace\zeta\in\C\colon \zeta^s=1\right\rbrace,\]
		which is the group of $s$-th roots of unity in $\C$.
	\end{Def}
	
	\medskip
	
	To deal with the rank-two case, we need to consider a subset of $Q_{f,g,c}^2$ in Definition~\ref{Q2poly}:
	\begin{Def}\label{Rpoly}
		Let $f,g,c\in\C[z]$ be polynomials such that $d_1:=\deg(f)\geq2$ and $d_2:=\deg(g)\geq2$. Define $R_{f,g,c}$ to be the set
		\[\left\lbrace \la\in\C\colon \exists (m,n)\in\Z_{\geq0}^2, f^{\circ m}(\la)=g^{\circ n}(\la)=c(\la),\,f^{\circ m}\neq c,\,g^{\circ n}\neq c,\,d_1^m\neq d_2^n\right\rbrace.\]
	\end{Def}
	
	Applying the rank-one Theorem~\ref{thmYZ}, we obtain the following result in the rank-two case:
	\begin{Prop}\label{Rfinite}
		Let $f,g,c\in\C[z]$ be polynomials such that $d_1:=\deg(f)\geq2$ and $d_2:=\deg(g)\geq2$. Suppose that $\#R_{f,g,c}<\infty$. Then $S_{f,g,c}^2$ is semi-linear.
	\end{Prop}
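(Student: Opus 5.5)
We decompose $S_{f,g,c}^2$ as in the proof of Proposition~\ref{Q2polyfinite}, separating the pairs $(m,n)$ with $d_1^m=d_2^n$. Set $A_f=\{m\colon f^{\circ m}=c\}$ and $A_g=\{n\colon g^{\circ n}=c\}$; each has at most one element. Let
\[E:=\left\lbrace (m,n)\in\Z_{\geq0}^2\colon d_1^m=d_2^n\right\rbrace.\]
Then
\[S_{f,g,c}^2=S_s\cup\bigcup_{\la\in R_{f,g,c}}S^2(\la)\cup\left(S_{f,g,c}^2\cap E\right),\]
where $S_s$ collects the pairs with $m\in A_f$ or $n\in A_g$, and $S^2(\la)=S_{f,c}(\la)\times S_{g,c}(\la)$. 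Indeed, a pair $(m,n)\in S_{f,g,c}^2$ with $m\notin A_f$, $n\notin A_g$ and $d_1^m\neq d_2^n$ has a witness $\la$, and this $\la$ lies in $R_{f,g,c}$ by definition. Lemma~\ref{lemSs} shows that $S_s$ is semi-linear. By \eqref{Shla}, each $S^2(\la)$ is a product of semi-linear subsets of $\Z_{\geq0}$, hence semi-linear by Theorem~\ref{thmGS}. Since $R_{f,g,c}$ is finite, the middle union is semi-linear. By Corollary~\ref{semiclosed}, it therefore suffices to show that $S_{f,g,c}^2\cap E$ is semi-linear.

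Next we describe $E$. If $d_1$ and $d_2$ are multiplicatively independent, then $E=\{(0,0)\}$, and we are done. Otherwise write $d_1=e^{p}$ and $d_2=e^{q}$ with $e\geq2$ and $\gcd(p,q)=1$. Then
\[E=\{(qk,pk)\colon k\in\Z_{\geq0}\},\]
which is linear.

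The main step is to reduce $S_{f,g,c}^2\cap E$ to a rank-one set and apply Theorem~\ref{thmYZ}. Put $F=f^{\circ q}$ and $G=g^{\circ p}$; both are polynomials of degree $e^{pq}\geq2$. Then $(qk,pk)\in S_{f,g,c}^2$ if and only if $F^{\circ k}(\la)=G^{\circ k}(\la)=c(\la)$ for some $\la\in\C$, that is, if and only if $k\in S_{F,G,c}$. By Theorem~\ref{thmYZ}, $S_{F,G,c}$ is a finite union of arithmetic progressions $\{a+bk\}$. Hence
\[S_{f,g,c}^2\cap E=\left\lbrace (qk,pk)\colon k\in S_{F,G,c}\right\rbrace\]
is a finite union of linear sets of the form $L\bigl((qa,pa),\{(qb,pb)\}\bigr)$. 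Equivalently, it is the image of a semi-linear set under a Presburger-definable map, so it is semi-linear by Theorem~\ref{thmGS}.

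I expect no real obstacle. The only delicate point is checking that the three-part decomposition is exhaustive, in particular that pairs lying in $E$ are handled by the last piece regardless of whether $f^{\circ m}=c$.
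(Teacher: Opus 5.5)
Your proof is correct and is essentially the paper's argument. You use the same decomposition into $S_s$, the finitely many $S^2(\la)$ with $\la\in R_{f,g,c}$, and the piece $\{(qk,pk)\colon k\in S_{F,G,c}\}$, which is handled by Theorem~\ref{thmYZ} applied to $F=f^{\circ q}$ and $G=g^{\circ p}$. The one difference is in the multiplicatively independent case: you treat it uniformly via $E=\{(0,0)\}$, whereas the paper reduces it to Proposition~\ref{Q2polyfinite} by identifying $R_{f,g,c}$ with $Q_{f,g,c}^2$. Your route avoids that identification, which in fact holds only up to the finitely many fixed points of $c$ coming from the witness $(m,n)=(0,0)$.
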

	\begin{proof}
		If $\log d_1/\log d_2 \notin\Q$, then $R_{f,g,c}=Q_{f,g,c}^2$ and the conclusion follows from Proposition~\ref{Q2polyfinite}. We may assume that $\log d_1/\log d_2 \in\Q_{>0}$, and write
		\[\frac{\log d_1}{\log d_2}=\frac{p}{q},\]
		where $p$ and $q$ are coprime positive integers. If $(m,n)\in\Z_{\geq0}^2$ is such that $d_1^m=d_2^n$, then $(m,n)=(qk,pk)$ for some integer $k\geq0$.
		
		Set $F=f^{\circ q}$ and $G=g^{\circ p}$. Then
		\[\deg(F)=d_1^q=d_2^p=\deg(G)\geq2.\]
		Let $S_s$ and $S^2(\la_0)$ ($\la_0\in R_{f,g,c}$) be as in the proof of Proposition~\ref{Q2polyfinite}. Then we have a decomposition
		\begin{equation}\label{S2decompR}
			S_{f,g,c}^2=\sA_R\cup S_s\cup\bigcup_{\la_0\in R_{f,g,c}}S^2(\la_0),
		\end{equation}
		where
		\[\sA_R=\left\lbrace(qk,pk)\colon k\in\Z_{\geq0}\text{ such that }\exists\la\in\C,\,f^{\circ qk}(\la)=g^{\circ pk}(\la)=c(\la)\right\rbrace.\]
		As in the proof of Proposition~\ref{Q2polyfinite}, the set $S_s$ is semi-linear by Lemma~\ref{lemSs}, and $S^2(\la_0)=S_{f,c}(\la_0)\times S_{g,c}(\la_0)$ is semi-linear for all $\la_0\in R_{f,g,c}$ by \eqref{Shla}.
		
		By Theorem~\ref{thmYZ} (in fact, a special case \cite[Proposition~4.5]{YZ26} is sufficient), the rank-one recurrence set
		\[S_{F,G,c}=\left\lbrace k\in\Z_{\geq0}\colon \exists\la\in\C,\,F^{\circ k}(\la)=G^{\circ k}(\la)=c(\la)\right\rbrace\]
		is semi-linear. Hence $\sA_R=\left\lbrace (qk,pk)\colon k\in S_{F,G,c}\right\rbrace $ is semi-linear by Theorem~\ref{thmGS}. Therefore, the decomposition \eqref{S2decompR} implies that $S_{f,g,c}^2$ is also semi-linear.
	\end{proof}
	
	Under the condition $\# R_{f,g,c}=\infty$, the following proposition studies the structure of $c$, which is a slight modification of Yang--Zhong's result \cite[Proposition~4.6]{YZ26}:
	\begin{Prop}\label{poly3cases}
		Let $f,g,c\in\C[z]$ be polynomials such that $\deg(f)\geq2$ and $\deg(g)\geq2$. Suppose that $\# R_{f,g,c}=\infty$. By Proposition~\ref{poly2musame}, we have
		\[P:=\PrePer(f,\C)=\PrePer(g,\C),\quad\mu:=\mu_f=\mu_g,\quad\text{and}\quad J:=J(f)=J(g).\]
		\begin{enumerate}
			\item If $c\in\C$, then $c\in P$.
			\item If $\deg(c)=1$, then $c(P)=P$ and $c(J)=J$.
			\item If $\deg(c)\geq2$, then $\PrePer(c,\C)=P$, $\mu_c=\mu$, and $J(c)=J$.
		\end{enumerate}
	\end{Prop}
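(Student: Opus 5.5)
We take the infinite set $R_{f,g,c}\subseteq Q_{f,g,c}^2$, so that Proposition~\ref{poly2musame} applies and gives the common $P$, $\mu$ and $J$. Choose pairwise distinct $\la_j\in R_{f,g,c}$ with exponents $(m_j,n_j)$ satisfying $f^{\circ m_j}\neq c$, $g^{\circ n_j}\neq c$ and $d_1^{m_j}\neq d_2^{n_j}$. As in the proof of Proposition~\ref{poly2musame}, $m_j,n_j\to\infty$ and the points $\la_j$ are small:
\[\hat h_f(\la_j)\to0,\qquad \hat h_g(\la_j)\to0,\]
with the same reduction to a finitely generated base field $K$. The points $c(\la_j)=f^{\circ m_j}(\la_j)$ also satisfy $\hat h_f(c(\la_j))=d_1^{m_j}\hat h_f(\la_j)$. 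The key observation is that this quantity tends to zero too, once we use the second equation. From
\[\hat h_f(c(\la_j))=\deg(c)\hat h_f(\la_j)+O(1)\]
we get $\hat h_f(\la_j)(d_1^{m_j}-\deg c)=O(1)$, so $\hat h_f(c(\la_j))=O(1)\cdot d_1^{m_j}/(d_1^{m_j}-\deg c)$. This is only bounded, not small. So I would instead follow the rank-one argument of \cite[Proposition~4.6]{YZ26} case by case.

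\textbf{Case (1), $c\in\C$.} Here $f^{\circ m_j}(\la_j)=c$ for infinitely many distinct $\la_j$. Then
\[\hat h_f(c)=d_1^{m_j}\hat h_f(\la_j)\quad\text{and}\quad \hat h_f(\la_j)=\hat h_f(c)/d_1^{m_j}.\]
To show $c\in P$, I would use that $\hat h_g(c)=\hat h_f(c)$, which holds by \eqref{htsame}, equivalently by $\mu_f=\mu_g$ in the form of equal canonical heights. Combining the two equations gives
\[d_1^{m_j}\hat h_f(\la_j)=d_2^{n_j}\hat h_f(\la_j).\]
Since $d_1^{m_j}\neq d_2^{n_j}$, this forces $\hat h_f(\la_j)=0$, hence $\hat h_f(c)=0$. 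This is exactly where the extra condition in $R_{f,g,c}$ enters. Then $c\in P$ follows by Case 1 of Proposition~\ref{poly2musame}, or by Baker's theorem \cite{Bak09} in the non-isotrivial case, with the isotrivial case reduced via \cite{BD11} as before.

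\textbf{General $c$.} The same trick gives $\hat h(c(\la_j))=d_1^{m_j}\hat h(\la_j)=d_2^{n_j}\hat h(\la_j)$ with $\hat h=\hat h_f=\hat h_g$. Hence $\hat h(\la_j)=0$ and $\hat h(c(\la_j))=0$, so both $\la_j$ and $c(\la_j)$ lie in $P$ for infinitely many $j$. Thus the curve $\Gamma_c$ contains infinitely many points of $P\times P$, which are preperiodic points of the split map $(f,f)$. At this point I would invoke the argument of \cite[Proposition~4.6]{YZ26}: infinitely many small points on $\Gamma_c$ together with equidistribution \cite{BD11}/Yuan imply $\Gamma_c$ is preperiodic, or at least that $c^*\mu$ is proportional to $\mu$. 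The two remaining cases then follow. If $\deg c=1$, equidistribution of $\la_j$ and $c(\la_j)$ to $\mu$ gives $c_*\mu=\mu$, hence $c(J)=J$. Moreover $\hat h\circ c$ and $\hat h$ are both canonical-type heights whose zero loci agree, so $c(P)=P$. If $\deg c\geq2$, then $\hat h\circ c=\deg(c)\hat h$ up to the equality of measures, giving $\mu_c=\mu$, $J(c)=J$, and $\PrePer(c,\C)=P$ via \cite[Theorem~1.4]{YZ21}.

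The main obstacle is this last step: passing from infinitely many common small points on the graph of $c$ to the global identities for $c$. I expect to import it essentially verbatim from Yang--Zhong, so the real new input is the cancellation using $d_1^{m}\neq d_2^{n}$. The isotrivial function-field case must again be handled as in Case~3 above.
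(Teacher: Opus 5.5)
Your cancellation step is correct, and it is a nice uniform shortcut. With $\hat h:=\hat h_f=\hat h_g$ from \eqref{htsame}, the equation $f^{\circ m_j}(\la_j)=g^{\circ n_j}(\la_j)$ gives $(d_1^{m_j}-d_2^{n_j})\hat h(\la_j)=0$. Hence $\hat h(\la_j)=0$ and $\la_j,c(\la_j)\in P$, with the isotrivial case reduced to $\overline{\Q}$ as in Case~3 of Proposition~\ref{poly2musame}. The paper instead obtains $\la_j\in P$ separately in each of Zdunik's three cases. Your argument settles (1) completely.

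The gap is exactly the step you flag, and what you write there does not close it. Asserting that $\hat h\circ c$ and $\hat h$ have the same zero locus \emph{is} the claim $c^{-1}(P)=P$. For $\deg(c)\geq2$, \cite[Theorem~1.4]{YZ21} needs infinitely many common preperiodic points of $c$ and $f$. You do not know that any $\la_j$ is $c$-preperiodic: knowing that $\la_j$ and $c(\la_j)$ are $f$-preperiodic is information about the curve $\Gamma_c$ under $(f,f)$, not about $c$ as a dynamical system.

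Equidistribution at the complex place gives at most $c^*\mu=\deg(c)\mu$, i.e.\ $\mu_c=\mu$ and $J(c)=J$, and this does not imply $\PrePer(c,\C)=P$. For instance, $c(z)=\zeta z^2$ with $|\zeta|=1$ and $\zeta\notin U(\C)$ has $\mu_c=\mu_{z^2}$ but $\PrePer(c,\C)=\{0\}\cup\zeta^{-1}U(\C)$. Nor does equidistribution by itself make $\Gamma_c$ preperiodic; that is the content of the dynamical Manin--Mumford theorem.

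The paper supplies precisely this missing step:
\begin{enumerate}
\item In the non-exceptional case, \cite[Theorem~1.5]{GNY19} gives $p^{\circ l}=\sigma\circ p^{\circ k}\circ c$ with $p,\sigma$ commuting with iterates of $h$. Theorem~\ref{polysamemu} turns this into $h^{\circ kM}\circ c=\xi\,h^{\circ lM}$ with $\xi\in U_s(\C)$. An induction then gives $O_c(z_0)\subseteq h^{-kM}(\Omega(z_0))$ for every $z_0\in P$, i.e.\ $P\subseteq\PrePer(c,\C)$ and $c(P)\subseteq P$. Only after that is \cite{YZ21} applied.
\item In the exceptional cases, the torsion points theorem on $\G_m^2$ (after lifting by $z+z^{-1}$ in the Chebyshev case) gives $c=\eta z^{d_3}$ or $c=\pm T_{|b|}$ explicitly.
\end{enumerate}

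Your data do suffice for (2) with one correct citation. The $\la_j$ are infinitely many common preperiodic points of $f$ and $c^{-1}\circ f\circ c$, whose preperiodic set is $c^{-1}(P)$. So \cite[Theorem~1.4]{YZ21} gives $c^{-1}(P)=P$ and $\mu=(c^{-1})_*\mu$, hence $c(J)=J$. This is shorter than the paper's route.

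For (3) you need either the paper's inputs or a genuinely adelic argument. The $\la_j$ are small for both $\hat h$ and $\hat h\circ c$, so equidistribution at \emph{every} place gives $c^*\mu_{f,v}=\deg(c)\mu_{f,v}$ for all $v$. Hence $\hat h_c=\hat h$ and $\PrePer(c,\C)=P$, with the non-isotrivial function-field case treated separately. Saying you will ``import it from Yang--Zhong'' without specifying such an argument is not a proof.
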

	\begin{proof}
		Set
		\[A_f=\left\lbrace m\in\Z_{\geq0}\colon f^{\circ m}=c\right\rbrace\quad\text{and}\quad A_g=\left\lbrace n\in\Z_{\geq0}\colon g^{\circ n}=c\right\rbrace.\]
		Then $\# A_f\leq1$ and $\# A_g\leq1$. Since $\# R_{f,g,c}=\infty$, we may take a sequence $(\la_j)_{j=1}^\infty\subseteq R_{f,g,c}$ of pairwise distinct elements, a sequence $(m_j)_{j=1}^\infty\subseteq\Z_{\geq0}\setminus A_f$, and a sequence $(n_j)_{j=1}^\infty\subseteq\Z_{\geq0}\setminus A_g$ such that for every $j\geq1$, we have
		\[d_1^{m_j}>d_2^{n_j}\quad\text{and}\quad f^{\circ m_j}(\la_j)=g^{\circ n_j}(\la_j)=c(\la_j).\]
		Let
		\[C=\left\lbrace (z,c(z))\colon z\in\P^1_\C\right\rbrace\subseteq(\P^1_\C)^2\]
		be the graph of $c:\P^1_\C\to\P^1_\C$, which is an irreducible curve.
		
		Since $\mu_f=\mu_g$, by \cite{Zdunik90}, there are three possibilities:
		\begin{itemize}
			\item both $f$ and $g$ are non-exceptional;
			\item $f$ is conjugate to $z^{d_1}$ and $g$ is conjugate to $z^{d_2}$;
			\item $f$ is conjugate to $\pm T_{d_1}$ and $g$ is conjugate to $\pm T_{d_2}$.
		\end{itemize}
		
		\medskip
		
		\textbf{Case 1.} Suppose that both $f$ and $g$ are non-exceptional.
		
		Since $f$ and $g$ have the same measure of maximal entropy $\mu$, by Theorem~\ref{polysamemu} and Remark~\ref{AutJ}, after affine conjugacy, we may assume that
		\begin{equation}\label{fgform1}
			f(z)=\zeta_1 h^{\circ k_1}(z)\quad\text{and}\quad g(z)=\zeta_2 h^{\circ k_2}(z),
		\end{equation}
		where $k_1,k_2\geq1$ are integers, $h\in\C[z]$ is a monic centered non-exceptional polynomial of degree $\geq2$ with $J(h)=J$ of the form
		\begin{equation}\label{hform1}
			h(z)=z^r R(z^s)\quad(r\geq1,\,s\geq1,\,\text{and }R\in\C[z]\setminus\C)
		\end{equation}
		as in Remark~\ref{AutJ}, and $\zeta_1,\zeta_2\in U_s(\C)$ are $s$-th roots of unity. Set $d:=\deg(h)\geq2$. Then $d_1=d^{k_1}$ and $d_2=d^{k_2}$.
		
		By \eqref{fgform1} and \eqref{hform1}, it is easy to see that $\PrePer(h,\C)=P$, i.e., $f$, $g$, and $h$ have the same set of preperiodic points. For $l\in\Z_{>0}$, since $d\equiv r\Mod{s}$, we compute that
		\begin{equation}\label{fgiter1}
			f^{\circ l}(z)=\zeta_1^{(d^{lk_1}-1)/(d^{k_1}-1)} h^{\circ l k_1}(z)\quad\text{and}\quad g^{\circ l}(z)=\zeta_2^{(d^{lk_2}-1)/(d^{k_2}-1)}h^{\circ l k_2}(z).
		\end{equation}
		Let $j\geq1$. Note that $m_j k_1>n_j k_2$ because $d_1^{m_j}>d_2^{n_j}$. Set $\mu_j=h^{\circ n_j k_2}(\la_j)$. From \eqref{fgiter1} and $f^{\circ m_j}(\la_j)=g^{\circ n_j}(\la_j)$, we obtain
		\begin{equation}\label{etapreper}
			\mu_j=h^{\circ n_j k_2}(\la_j)=\eta_j h^{\circ m_j k_1}(\la_j)=\eta_j h^{\circ \left(m_j k_1-n_j k_2\right)}(\mu_j)
		\end{equation}
		for some $s$-th root of unity $\eta_j\in U_s(\C)$. Since $m_j k_1-n_j k_2>0$, by \eqref{etapreper} and \eqref{hform1} it is easy to see that $\mu_j\in\PrePer(h,\C)$, hence $\la_j\in\PrePer(h,\C)=P$. Therefore,
		\[c(\la_j)=f^{\circ m_j}(\la_j)\in\PrePer(f,\C)=P.\]
		
		Then the graph $C$ of $c$ contains infinitely many $(h,h)$-preperiodic points
		\[\left\lbrace (\la_j,c(\la_j))\colon j\geq1 \right\rbrace,\]
		where $(h,h)$ is the split endomorphism
		\[(h,h):(\P^1_\C)^2\to(\P^1_\C)^2,\,(x,y)\mapsto(h(x),h(y)).\]
		If $c\in\C$, then $c\in P$. Assume now $\deg(c)\geq1$. By \cite[Theorem~1.5]{GNY19}, there exist integers $N_1,N_2\geq1$, $l\geq k\geq0$, a polynomial $p\in\C[z]$ of degree $\geq2$, and a polynomial $\sigma\in\C[z]$ of degree $1$ such that
		\begin{equation}\label{prepercurve}
			p^{\circ l}=\sigma\circ p^{\circ k}\circ c
		\end{equation}
		and
		\begin{equation}\label{psigcomm}
			p\circ h^{\circ N_1}=h^{\circ N_1}\circ p\quad\text{and}\quad\sigma\circ h^{\circ N_2}=h^{\circ N_2}\circ\sigma.
		\end{equation}
		
		From \eqref{psigcomm}, it is easy to see that
		\[\PrePer(p,\C)=P\quad\text{and}\quad\sigma(J)=J.\]
		In fact, for $z_0\in P=\PrePer(h^{\circ N_1},\C)$, assume that $h^{\circ mN_1}(z_0)=h^{\circ nN_1}(z_0)$ for some integers $m>n\geq0$. For every $k\geq0$, we have
		\[h^{\circ mN_1}\circ p^{\circ k}(z_0)=p^{\circ k}\circ h^{\circ mN_1}(z_0)=p^{\circ k}\circ h^{\circ nN_1}(z_0)=h^{\circ nN_1}\circ p^{\circ k}(z_0);\]
		hence, the forward $p$-orbit $O_p(z_0)$ is contained in the finite set
		\[\left\lbrace z\in\C\colon h^{\circ mN_1}(z)=h^{\circ nN_1}(z)\right\rbrace\]
		and $z_0\in\PrePer(p,\C)$. We have shown that $P\subseteq\PrePer(p,\C)$, and the converse inclusion can be proved similarly. The equality $\sigma(J)=J$ holds because $\sigma$ commutes with $h^{\circ N_2}$ and
		\[J(h^{\circ N_2})=J(h)=J.\]
		
		By \cite[Theorem~1.4]{YZ21}, we have $\mu_{p}=\mu$ and $J(p)=J$. Then Theorem~\ref{polysamemu} and Remark~\ref{AutJ} imply that
		\begin{equation}\label{psigform}
			p(z)=\mu_1 h^{\circ M}(z)\quad\text{and}\quad\sigma(z)=\mu_2 z,
		\end{equation}
		where $\mu_1,\mu_2\in U_s(\C)$, and $M=\log_d\deg(p)$ is a positive integer. Substituting \eqref{psigform} into \eqref{prepercurve}, we obtain
		\begin{equation}\label{hc}
			h^{\circ kM}\circ c(z)=\mu h^{\circ lM}(z)
		\end{equation}
		for $\mu\in U_s(\C)$.
		
		Let $z_0\in P$. Set
		\[\Omega(z_0)=\left\lbrace \zeta y\colon y\in O_h(z_0),\,\zeta\in\C,\,\zeta^s=1\right\rbrace\quad\text{and}\quad\La(z_0)=h^{-kM}(\Omega(z_0))\]
		which are finite subsets of $P$ by \eqref{hform1}. Clearly, we have $z_0\in\La(z_0)$. Evaluating \eqref{hc} at $z=z_0$, we get $c(z_0)\in\La(z_0)\subseteq P$. Suppose that we have shown $c^{\circ n}(z_0)\in\La(z_0)$ where $n\in\Z_{>0}$. Evaluating \eqref{hc} at $z=c^{\circ n}(z_0)$, by \eqref{hform1} and the inductive hypothesis we have
		\[h^{\circ kM}(c^{\circ (n+1)}(z_0))=\mu h^{\circ (l-k)M}(h^{\circ kM}(c^{\circ n}(z_0)))\subseteq \mu h^{\circ (l-k)M}(\Omega(z_0))\subseteq\Omega(z_0),\]
		hence $c^{\circ (n+1)}(z_0)\in\La(z_0)$. By induction on $n$, we obtain $O_c(z_0)\subseteq\La(z_0)$. Hence $z_0$ is $c$-preperiodic. We have proved $P\subseteq\PrePer(c,\C)$ and $c(P)\subseteq P$.
		
		Suppose that $\deg(c)\geq2$. Since $P\subseteq\PrePer(c,\C)$, we obtain $\mu_c=\mu$ by \cite[Theorem~1.4]{YZ21}, hence $J(c)=J$.
		
		Suppose that $\deg(c)=1$. By $c(P)\subseteq P$, the intersection $\PrePer(c\circ h\circ c^{-1},\C)\cap P$ contains the infinite set $c(P)$. Then
		\[\mu_h=\mu_{c\circ h\circ c^{-1}}=c_*\mu_h\]
		by \cite[Theorem~1.4]{YZ21}. Taking supports shows $c(J)=J$. By Remark~\ref{AutJ}, $c(z)=\eta z$ for some $\eta\in U_s(\C)$. From the expression \eqref{hform1}, we deduce that $c(P)=P$.
		
		\medskip
		
		\textbf{Case 2.} Suppose that $f$ is conjugate to $z^{d_1}$ and $g$ is conjugate to $z^{d_2}$.
		
		After affine conjugacy, we may assume that
		\[f(z)=z^{d_1}\quad\text{and}\quad g(z)=\sigma\circ z^{d_2}\circ\sigma^{-1}(z)\]
		for some $\sigma\in\C[z]$ of degree $1$. Since all power polynomials $z^k$ ($k\geq2$) have Julia set $\partial\D=\{z\in\C\colon|z|=1\}$, we have
		\[\partial\D=J(f)=J(g)=\sigma(J(z^{d_2}))=\sigma(\partial\D),\]
		which implies that $\sigma(z)=az$ for some $a\in\partial\D$. Then $g(z)=a^{1-d_2}z^{d_2}$. From $\PrePer(f,\C)=\PrePer(g,\C)$, we see that $\zeta:=a^{1-d_2}\in U(\C)$ is a root of unity.
		
		Note that $J=\partial\D$ and $P=\{0\}\cup U(\C)$. After removing finitely many terms, we may assume that $\la_j\neq0$ for all $j\geq1$. Let $j\geq1$. Since
		\[\la_j^{d_1^{m_j}}=f^{\circ m_j}(\la_j)=g^{\circ n_j}(\la_j)=\zeta\la_j^{d_2^{n_j}}\]
		and $d_1^{m_j}>d_2^{n_j}$, we have $\la_j\in U(\C)$. Hence $c(\la_j)=\la_j^{d_1^{m_j}}$ is also a root of unity. In particular, $c$ is not the constant $0$. Then the graph $C$ of $c$ contains infinitely many points
		\[\left\lbrace (\la_j,c(\la_j))\colon j\geq1\right\rbrace\]
		with coordinates in $U(\C)$. If $c\in\C$, then $c\in U(\C)\subseteq P$. Assume now $\deg(c)\geq1$. Applying the torsion point theorem (i.e., Manin--Mumford conjecture for $\G_{m,\C}^N$) \cite[Theorem~1.1]{UnlikelyBook} to the irreducible curve $C$, we conclude that $c$ is of the form
		\[c(z)=\eta z^{d_3},\]
		for some $\eta\in U(\C)$ and $d_3=\deg(c)\geq1$. If $d_3=1$, then $c(P)=P$ and $c(J)=J$. If $d_3\geq2$, then $\PrePer(c,\C)=P$, $\mu_c=\mu$, and $J(c)=J$.
		
		\medskip
		
		\textbf{Case 3.} Suppose that $f$ is conjugate to $\pm T_{d_1}$ and $g$ is conjugate to $\pm T_{d_2}$.
		
		After affine conjugacy, we may assume that
		\[f(z)=\varepsilon_1 T_{d_1}(z)\quad\text{and}\quad g(z)=\sigma\circ (\varepsilon_2^\prime T_{d_2})\circ\sigma^{-1}(z)\]
		for some $\varepsilon_1,\varepsilon_2^\prime\in\{\pm1\}$ and some $\sigma\in\C[z]$ of degree $1$. Since
		\[J(\pm T_d)=\left[-2,2\right] \subseteq\R\]
		for every $d\geq2$, we obtain
		\[\left[-2,2\right]=J(f)=J(g)=\sigma(J(\varepsilon_2^\prime T_{d_2}))=\sigma(\left[-2,2\right]),\]
		which implies that $\sigma(z)=az$ for some $a\in\{\pm 1\}$. Then $g(z)=\varepsilon_2 T_{d_2}(z)$, where $\varepsilon_2=a^{1-d_2}\varepsilon_2^\prime\in\{\pm1\}$.
		
		Let $\pi:\P^1_\C\to\P^1_\C$ be the endomorphism given by $\pi(z)=z+z^{-1}$. Then for all integers $d\geq2$,
		\begin{equation}\label{piTd}
			\pi\circ z^d=T_d\circ\pi(z)\quad\text{and}\quad\pi\circ(-z^d)=-T_d\circ\pi(z). 
		\end{equation}
		It is straightforward to verify that
		\[\PrePer(\pm T_d,\C)=\{\zeta+\zeta^{-1}\colon\zeta\in U(\C)\}=P.\]
		For each $j\geq1$, fix a preimage $\mu_j\in\pi^{-1}(\la_j)\subseteq\C^\times$. After passing to subsequences, we may assume that $(\mu_j)_j$ is a sequence of pairwise distinct non-zero complex numbers. Let $j\geq1$. From $f^{\circ m_j}(\la_j)=g^{\circ n_j}(\la_j)$ and \eqref{piTd}, we get
		\begin{equation}\label{Tdj1}
			\pi\left(\mu_j^{d_1^{m_j}}\right)=\gamma_j\pi\left(\mu_j^{d_2^{n_j}}\right)=\pi\left(\gamma_j\mu_j^{d_2^{n_j}}\right),
		\end{equation}
		where $\gamma_j=\varepsilon_2^{(d_2^{n_j}-1)/(d_2-1)}\varepsilon_1^{-(d_1^{m_j}-1)/(d_1-1)}\in\{\pm1\}$. By \eqref{Tdj1}, we conclude that
		\[\mu_j^{d_1^{m_j}}=\gamma_j\mu_j^{d_2^{n_j}}\quad\text{or}\quad\mu_j^{d_1^{m_j}}=\gamma_j^{-1}\mu_j^{-d_2^{n_j}};\]
		hence, $0\neq\mu_j$ is a root of unity because $d_1^{m_j}\pm d_2^{n_j}>0$.
		
		Let $C_0:=(\pi,\pi)^{-1}(C)\subseteq(\P^1_\C)^2$ be the preimage of the graph $C$ of $c$ under the split endomorphism $(\pi,\pi):(\P^1_\C)^2\to(\P^1_\C)^2$. We have $\dim C_0=1$. Set
		\[p_j=\left(\mu_j,\varepsilon_{1j}\mu_j^{d_1^{m_j}}\right) \in U(\C)^2,\]
		where $\varepsilon_{1j}=\varepsilon_1^{(d_1^{m_j}-1)/(d_1-1)}\in\{\pm1\}$. Then
		\[(\pi,\pi)(p_j)=\left(\la_j,\varepsilon_{1j} T_{d_1^{m_j}}(\pi(\mu_j))\right)=\left(\la_j,f^{\circ m_j}(\la_j)\right)=\left(\la_j,c(\la_j)\right)\in C,\]
		hence $p_j\in C_0$. We have shown that $C_0$ contains infinitely many complex points $\{p_j\colon j\geq1\}$ with coordinates in $U(\C)$. Applying the torsion point theorem \cite[Theorem~1.1]{UnlikelyBook} to $C_0$, we conclude that there exist coprime integers $a,b\in\Z\setminus\{0\}$ and $\eta\in U(\C)$ such that the (set-theoretic) image of the morphism
		\[\varphi:\P^1_\C\to(\P^1_\C)^2,\,t\mapsto(t^a,\eta t^b)\]
		is contained in $C_0$. Then $\mathrm{Im}((\pi,\pi)\circ\varphi)\subseteq C$ as sets, which implies that
		\begin{equation}\label{cab}
			c(t^a+t^{-a})=\eta t^b+\eta^{-1} t^{-b}\quad\text{in}\quad\C(t).
		\end{equation}
		Taking degrees for \eqref{cab}, we get $\deg(c)\cdot|a|=|b|$. Then $|a|=1$ because $\gcd(a,b)=1$. So \eqref{cab} becomes
		\[c(t+t^{-1})=\eta t^b+\eta^{-1} t^{-b}=\eta T_{|b|}(t+t^{-1})+(\eta^{-1}-\eta)t^{-b},\]
		where we define $T_1(z)=z$. Hence $(\eta^{-1}-\eta)t^{-b}$ is a polynomial in $t+t^{-1}$, which is possible only if $\eta^{-1}-\eta=0$, i.e., $\eta=\pm1$. Thus,
		\[c(z)=\pm T_{|b|}(z).\]
		If $\deg(c)=|b|=1$, then $c(z)=\pm z$, and it is easy to see that $c(P)=P$ and $c(J)=J$. If $\deg(c)=|b|\geq2$, then $\PrePer(c,\C)=P$, $\mu_c=\mu$, and $J(c)=J$.
	\end{proof}
	
	\subsection{The case of rational maps}\label{secsamemurat}
	\begin{Def}
		Let $f,g,c\in\C(z)$ be rational maps such that $\deg(f)\geq2$ and $\deg(g)\geq2$. Define
		\[\hat{Q}_{f,g,c}:=\left\lbrace \la\in\P^1(\C)\colon \exists n\in\Z_{\geq0}, f^{\circ n}(\la)=g^{\circ n}(\la)=c(\la)\right\rbrace.\]
	\end{Def}
	
	\begin{Prop}\label{Qratfinite}
		Let $f,g,c\in\C(z)$ be rational maps such that $\deg(f)\geq2$ and $\deg(g)\geq2$. If $\# \hat{Q}_{f,g,c}<\infty$, then $\hat{S}_{f,g,c}$ is semi-linear.
	\end{Prop}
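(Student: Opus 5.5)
Since $\hat{Q}_{f,g,c}$ is finite, the set $\hat{S}_{f,g,c}$ comes from finitely many witnesses $\la$, plus possibly a few exceptional indices where one of the defining equations holds identically. We therefore decompose
\[\hat{S}_{f,g,c}=\bigcup_{\la\in\hat{Q}_{f,g,c}}\hat{S}(\la),\qquad \hat{S}(\la):=\left\lbrace n\in\Z_{\geq0}\colon f^{\circ n}(\la)=g^{\circ n}(\la)=c(\la)\right\rbrace .\]
This is the rank-one analogue of the decomposition \eqref{S2decomp} in Proposition~\ref{Q2polyfinite}. The decomposition is simply a tautology: every $n\in\hat{S}_{f,g,c}$ has some witness $\la$, and that witness lies in $\hat{Q}_{f,g,c}$ by definition.

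Next we check that each $\hat{S}(\la)$ is semi-linear. Note that $\hat{S}(\la)=\hat{S}_{f,c}(\la)\cap\hat{S}_{g,c}(\la)$, where $\hat{S}_{h,c}(\la):=\{n\colon h^{\circ n}(\la)=c(\la)\}$ for $h\in\{f,g\}$ and $\la\in\P^1(\C)$. The argument for \eqref{Shla} carries over verbatim to rational maps on $\P^1(\C)$. If $c(\la)$ is not in the forward orbit $O_h(\la)$, the set is empty. If $\la$ is not $h$-preperiodic, the orbit points are pairwise distinct, so the set is a singleton. If $\la$ is preperiodic with preperiod $a$ and period $b$, the set is either a singleton with index $<a$ or an arithmetic progression $\{n_0+bk\}$. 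In all cases the set is semi-linear. An intersection of two semi-linear sets is semi-linear by Corollary~\ref{semiclosed}. A finite union of semi-linear sets is again semi-linear, so finiteness of $\hat{Q}_{f,g,c}$ finishes the proof.

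No exceptional terms like $S_s$ are needed here. Unlike $Q_{f,g,c}^2$, the definition of $\hat{Q}_{f,g,c}$ does not exclude indices with $f^{\circ n}=c$; such indices simply force $\hat{Q}_{f,g,c}$ to contain all solutions of $g^{\circ n}=c$, which are finitely many or all of $\P^1$. The latter case cannot occur under the finiteness hypothesis, or it is harmless. The only point needing care is that $\la=\infty$, or orbits passing through $\infty$, are handled uniformly by working on $\P^1(\C)$. This is automatic, since the orbit argument uses only that $h\colon\P^1\to\P^1$ is a map. I expect no real obstacle: the statement is a bookkeeping reduction, and the substantive work lies in showing finiteness of $\hat{Q}_{f,g,c}$ or handling its infinite case.
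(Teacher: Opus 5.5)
Your proof is correct and is essentially the paper's argument. The paper only says the proof is similar to that of Proposition~\ref{Q2polyfinite}: decompose $\hat{S}_{f,g,c}$ over the finitely many witnesses $\la\in\hat{Q}_{f,g,c}$, and use that each set $\{n\colon h^{\circ n}(\la)=c(\la)\}$ is semi-linear as in \eqref{Shla}, with the product $S_{f,c}(\la)\times S_{g,c}(\la)$ replaced by an intersection in rank one. You are also right that no $S_s$ term is needed: unlike $Q^2_{f,g,c}$, the set $\hat{Q}_{f,g,c}$ does not exclude indices with $f^{\circ n}=c$ or $g^{\circ n}=c$, so your decomposition is genuinely tautological.
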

	\begin{proof}
		The proof is similar to the proof of Proposition~\ref{Q2polyfinite}; see also \cite[Proof of Theorem~4.8]{YZ26}.
	\end{proof}
	
	For a rational map $h\in\C(z)$, the set of $h$-preperiodic points in $\P^1(\C)$ is denoted by $\PrePer(h,\P^1(\C))$. The following proposition is an analogue of Proposition~\ref{poly2musame} in the rank-one case of rational maps:
	\begin{Prop}\label{ratmusame}
		Let $f,g,c\in\C(z)$ be rational maps such that $\deg(f)\geq2$ and $\deg(g)\geq2$. Suppose that $\# \hat{Q}_{f,g,c}=\infty$. Then $\PrePer(f,\P^1(\C))=\PrePer(g,\P^1(\C))$; hence, we have $\mu_f=\mu_g$ and $J(f)=J(g)$.
	\end{Prop}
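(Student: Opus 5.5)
The plan is to follow the proof of Proposition~\ref{poly2musame} closely, replacing affine heights by heights on $\P^1$ and treating the point $\infty$ like any other point. First I would reduce to the case that $f^{\circ n}\neq g^{\circ n}$ for all $n\geq1$. If $f^{\circ n}=g^{\circ n}$ for some $n\geq1$, then a point is $f$-preperiodic if and only if it is $f^{\circ n}$-preperiodic, if and only if it is $g$-preperiodic, and we are done. Next, at most one $n$ satisfies $f^{\circ n}=c$ as rational maps, since $\deg(f)\geq2$ and the degrees $\deg(f^{\circ n})$ are strictly increasing; the same holds for $g$. Since $\#\hat{Q}_{f,g,c}=\infty$, I would choose pairwise distinct $\la_j\in\hat{Q}_{f,g,c}$ and integers $n_j$, avoiding these exceptional indices, with $f^{\circ n_j}(\la_j)=g^{\circ n_j}(\la_j)=c(\la_j)$. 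Each equation $f^{\circ n}(\la)=c(\la)$ with $f^{\circ n}\neq c$ has finitely many solutions in $\P^1(\C)$, so $n_j\to\infty$.

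For the height estimate, let $K$ be the field generated over $\overline{\Q}$ by the coefficients of $f,g,c$, and let $h$ be the Weil height on $\P^1(\overline{K})$, taken over number fields if $K=\overline{\Q}$ and relative to the function field $K/\overline{\Q}$ otherwise. The $\la_j$ lie in $\P^1(\overline{K})$, because each is a root of the nonzero equation $f^{\circ n_j}=g^{\circ n_j}$ defined over $K$. The canonical heights satisfy $\hat{h}_f\circ f=\deg(f)\hat{h}_f$ and $\hat{h}_f\circ c=\deg(c)\hat{h}_f+O(1)$ on $\P^1$, which also covers constant $c$ via $\deg(c)=0$. Using these, I would derive
\[\hat{h}_f(\la_j)=\frac{O(1)}{\deg(f)^{n_j}-\deg(c)}\to0,\]
and similarly $\hat{h}_g(\la_j)\to0$. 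The case $\deg(f)\neq\deg(g)$ needs no change, since each estimate uses only its own map. Having an infinite sequence of points small for both heights, I would apply the equidistribution theorem for small points on $\P^1$ at every place (Baker--Rumely, Chambert-Loir, Favre--Rivera-Letelier; the same input as \cite[Proposition~2.5]{NZ25}) to get $\hat{h}_f=\hat{h}_g$, with equality of canonical measures at every place.

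To pass from equal heights to equal preperiodic sets, I would split into the same three cases as in Proposition~\ref{poly2musame}. If $K=\overline{\Q}$, Northcott gives $\PrePer(f)=\hat{h}_f^{-1}(0)=\hat{h}_g^{-1}(0)=\PrePer(g)$. If $f$ is non-isotrivial, Baker's theorem for rational maps over function fields shows that $\hat{h}_f^{-1}(0)=\PrePer(f)$, and that $g$ is non-isotrivial as well. In the isotrivial case, I would use equality of the Berkovich measures at all places together with the good-reduction criterion of \cite{BD11}: after conjugating $f$ to be defined over $\overline{\Q}$, the measure $\mu_{f,v}$ is the Dirac mass at the Gauss point, hence so is $\mu_{g,v}$, so $g$ has potential good reduction everywhere and is defined over $\overline{\Q}$ after the same conjugation. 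The points $\la_j$ and $c(\la_j)$ are then algebraic, so the graph of $c$ is defined over $\overline{\Q}$, which reduces to the first case. Finally, $\PrePer(f)=\PrePer(g)$ gives $\mu_f=\mu_g$ by \cite[Theorem~1.4]{YZ21}, and taking supports gives $J(f)=J(g)$.

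I expect the isotrivial case to be the main obstacle. For rational maps, potential good reduction at every place does not by itself force isotriviality, and making simultaneous conjugation work requires care. I would try first to rely on Baker's isotriviality criterion for rational maps (McMullen-type rigidity in the non-Lattès case), handling Lattès maps separately.
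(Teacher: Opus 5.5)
The gap is in the step where you pick the pairs $(\la_j,n_j)$, and for $c(z)=z$ the statement itself is false. Otherwise your route is the paper's: its proof just says to rerun Proposition~\ref{poly2musame}, and everything you do after that choice goes through for rational maps.

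\textbf{Why the choice of $(\la_j,n_j)$ is not justified.} You take the $\la_j$ ``avoiding these exceptional indices'' without saying why this leaves infinitely many points. In Proposition~\ref{poly2musame} this is automatic, because the conditions $f^{\circ m}\neq c$ and $g^{\circ n}\neq c$ are built into $Q_{f,g,c}^2$ (Definition~\ref{Q2poly}). The set $\hat{Q}_{f,g,c}$ has no such conditions, so you must show the exceptional indices contribute only finitely many $\la$. Put $A_f=\{n\colon f^{\circ n}=c\}$ and $A_g=\{n\colon g^{\circ n}=c\}$.
\begin{enumerate}
\item[(i)] An index $n_0\in A_f\setminus A_g$ only yields solutions of $g^{\circ n_0}(\la)=c(\la)$ with $g^{\circ n_0}\neq c$, which is a finite set. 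The case $n_0\in A_g\setminus A_f$ is symmetric.
\item[(ii)] An index $n_0\in A_f\cap A_g$ with $n_0\geq1$ gives $f^{\circ n_0}=g^{\circ n_0}$, which your first reduction covers.
\item[(iii)] The remaining case is $0\in A_f\cap A_g$, i.e.\ $c(z)=z$. Your reduction only uses $n\geq1$, so it does not cover this.
\end{enumerate}
In case (iii) the step genuinely fails: $n=0$ puts every $\la\in\P^1(\C)$ into $\hat{Q}_{f,g,c}$. The proposition is then false. Take $f=z^2$, $g=z^2+1$, $c=z$. Then $\hat{Q}_{f,g,c}=\P^1(\C)$. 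But $-1\in\PrePer(f,\P^1(\C))$, while its $g$-orbit $-1,2,5,26,\dots$ is infinite.

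\textbf{How to repair it.} Your argument proves the proposition under the extra hypothesis $c\neq\Id$. Equivalently, it proves it for $\hat{Q}_{f,g,c}$ redefined with the conditions $f^{\circ n}\neq c$ and $g^{\circ n}\neq c$, as in Definition~\ref{Q2poly}. You should say so and include the finiteness check (i)--(iii). The paper's one-line proof has the same blind spot, but nothing downstream breaks. With the redefined set, Proposition~\ref{Qratfinite} still holds, because the excluded indices add at most two elements to $\hat{S}_{f,g,c}$.

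\textbf{The isotrivial case.} The obstacle you anticipate there does not arise. After conjugating so that $f\in\overline{\Q}(z)$, the equality $\mu_{g,v}=\mu_{f,v}=\delta_{\mathrm{Gauss}}$ makes the Gauss point totally invariant for $g$. So $g$ has actual good reduction, not merely potential good reduction, at every place \emph{in that same coordinate}. That already forces $g\in\overline{\Q}(z)$: the coefficient map from the complete base lands in the affine complement of the resultant hypersurface, hence is constant. This is Case~3 of Proposition~\ref{poly2musame} via \cite{BD11}, and that step never uses that $f,g$ are polynomials. No rigidity argument or separate treatment of Latt\`es maps is needed.
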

	\begin{proof}
		The proof is similar to the proof of Proposition~\ref{poly2musame}; see also \cite[Proof of Theorem~4.5]{YZ26}.
	\end{proof}
	
	\begin{Rem}
		The argument in Propositions~\ref{Q2polyfinite}~and~\ref{poly2musame} for the rank-two case of polynomials also applies to rational maps. Precisely, let $f,g,c\in\C(z)$ be rational maps with $\deg(f)\geq2$ and $\deg(g)\geq2$, and define
		\[\hat{Q}_{f,g,c}^2:=\left\lbrace \la\in\P^1(\C)\colon \exists (m,n)\in\Z_{\geq0}^2, f^{\circ m}(\la)=g^{\circ n}(\la)=c(\la),\,f^{\circ m}\neq c,\,g^{\circ n}\neq c\right\rbrace.\]
		Then we can prove:
		\begin{itemize}
			\item If $\# \hat{Q}_{f,g,c}^2<\infty$, then $\hat{S}_{f,g,c}^2$ is semi-linear.
			\item If $\# \hat{Q}_{f,g,c}^2=\infty$, then $\PrePer(f,\P^1(\C))=\PrePer(g,\P^1(\C))$, and hence $\mu_f=\mu_g$ and $J(f)=J(g)$.
		\end{itemize}
	\end{Rem}
	
	The following theorem of Levin and Przytycki \cite[Theorem~1.6]{Ye} describes non-exceptional rational maps with the same measure of maximal entropy (see \cite{LP97} and \cite{Paksamemu}):
	\begin{Thm}[Levin--Przytycki]\label{ratsamemu}
		Let $f,g\in\C(z)$ be two non-exceptional rational maps of degree $\geq2$. Then the following statements are equivalent:
		\begin{enumerate}
			\item $\mu_f=\mu_g$;
			\item \label{ratsamemu2} there exist integers $l,k\geq1$ such that $f^{\circ 2l}=f^{\circ l}\circ g^{\circ k}$ and $g^{\circ 2k}=g^{\circ k}\circ f^{\circ l}$.
		\end{enumerate}
	\end{Thm}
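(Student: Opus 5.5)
For (2)$\Rightarrow$(1) I would show directly that $f$ and $g$ have the same preperiodic points. Then I would invoke \cite[Theorem~1.4]{YZ21}, in the form used in the proof of Proposition~\ref{poly2musame}, which gives $\mu_f=\mu_g$ from equality of preperiodic sets. This step relies on that result applying to rational maps, not only to polynomials.

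Set $F=f^{\circ l}$ and $G=g^{\circ k}$, so that $F\circ G=F\circ F$ and $G\circ F=G\circ G$. By induction, $F\circ G^{\circ n}=F^{\circ(n+1)}$: indeed $F\circ G^{\circ(n+1)}=(F\circ G)\circ G^{\circ n}=F\circ F\circ G^{\circ n}=F^{\circ(n+2)}$. Hence for every $z$ the whole $G$-orbit of $z$ lies in $F^{-1}\bigl(O_F(F(z))\bigr)$. If $z$ is $F$-preperiodic, this set is finite, so $z$ is $G$-preperiodic. The symmetric relation gives the reverse inclusion. Since $\PrePer(f)=\PrePer(F)$ and $\PrePer(g)=\PrePer(G)$, we get $\PrePer(f,\P^1(\C))=\PrePer(g,\P^1(\C))$, and hence $\mu_f=\mu_g$. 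This direction does not need non-exceptionality.

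For (1)$\Rightarrow$(2), which is the real content, I would follow the Levin--Przytycki strategy based on local symmetries of $\mu:=\mu_f=\mu_g$. The steps are as follows.

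First, I would prove that $\deg f$ and $\deg g$ are multiplicatively dependent. The measure $\mu$ has constant Jacobian $\deg f$ under $f$ and $\deg g$ under $g$. Comparing the Lyapunov exponent and the entropy of $\mu$ for $f$ and for $g$ should yield this. Choosing $l,k$ with $\deg f^{\circ l}=\deg g^{\circ k}=:D$, the maps $F=f^{\circ l}$ and $G=g^{\circ k}$ both have Jacobian $D$ with respect to $\mu$.

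Second, I would consider the family $\mathcal{G}$ of local holomorphic germs of the form $\psi=G^{-n}\circ F^{\circ n}$, where $G^{-n}$ ranges over inverse branches near points of $J$. Each such germ preserves $\mu$ with Jacobian $1$. For non-exceptional maps I would show that $\mathcal{G}$ is a normal family on a fixed small disc meeting $J$, and that its limits are non-constant. The argument would use that $\mu$ is not absolutely continuous with respect to a smooth measure on a real-analytic arc, so degenerate or constant limits would force $J$ to be smooth and the map to be exceptional. Levin's rigidity theorem would then imply that $\mathcal{G}$, restricted to a small disc, contains only finitely many distinct germs. By the pigeonhole principle, two indices $n_1<n_2$ give the same germ. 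Unwinding the compositions and using analytic continuation along $J$, together with the identity principle, this would lead to an identity of the shape $F^{\circ a}\circ G^{\circ b}=F^{\circ(a+b)}$. After replacing $l,k$ by suitable multiples, this would give $f^{\circ 2l}=f^{\circ l}\circ g^{\circ k}$. The symmetric argument would give $g^{\circ 2k}=g^{\circ k}\circ f^{\circ l}$, and a common multiple makes the two relations hold with the same pair $(l,k)$.

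The main obstacle is the finiteness step: proving that the family of $\mu$-preserving local symmetries is finite, equivalently normal with non-degenerate limits, exactly when the maps are non-exceptional. This is where the exceptional cases $z^d$, $T_d$ and Latt\`es maps, which have infinitely many such symmetries, must be excluded. I would try first to adapt Levin's argument on symmetries of Julia sets.
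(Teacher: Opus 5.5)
The paper gives no proof of this theorem. It quotes the result from Levin--Przytycki \cite{LP97} in the form of \cite[Theorem~1.6]{Ye}.

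Your (2)$\Rightarrow$(1) is correct. In fact $F\circ G=F\circ F$ alone suffices: $F^{\circ(n+1)}(z)=F(G^{\circ n}(z))$ also gives $\PrePer(G)\subseteq\PrePer(F)$. You can also bypass \cite{YZ21}. Since $(F\circ G)^*=G^*F^*$, you get $G^*\mu_F=D\,\mu_F$, hence $\mu_G=\mu_F$ by uniqueness of the balanced measure \cite{Lyubich83,FLM83}.

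The direction (1)$\Rightarrow$(2) has genuine gaps. First, multiplicative dependence of the degrees cannot come from comparing entropy and Lyapunov exponents. That comparison only yields $\log\deg f/\chi_\mu(f)=\log\deg g/\chi_\mu(g)$ ($=\dim\mu$), which carries no arithmetic information. It also never uses non-exceptionality. Yet $z^2$ and $z^3$ share $\mu$ (normalized arc length on $\partial\D$) while $\log 3/\log 2\notin\Q$. The dependence has to be extracted from the same rigidity input as the rest. For instance, the local maps $g^{-m}\circ f^{\circ n}$ rescale $\mu$ by $\deg(f)^n/\deg(g)^m$, and these factors accumulate at $1$ when the ratio of logarithms is irrational.

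Second, your family $\mathcal G$ is composed in the wrong order. Suppose $G^{-n_1}\circ F^{\circ n_1}=G^{-n_2}\circ F^{\circ n_2}$ on an open set with $n_1<n_2$. Applying $G^{\circ n_2}$ gives $G^{\circ(n_2-n_1)}\circ F^{\circ n_1}=F^{\circ n_2}$. Surjectivity of $F^{\circ n_1}$ turns this into $G^{\circ b}=F^{\circ b}$ with $b=n_2-n_1$, not $F^{\circ a}\circ G^{\circ b}=F^{\circ(a+b)}$.

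This stronger conclusion is false in general. Take $f(z)=z^2+c$ with $c\neq0,-2$ and $g=-f$. Then $\mu_f=\mu_g$, $f\circ g=f\circ f$ and $g\circ g=g\circ f$, but $g^{\circ n}=-f^{\circ n}\neq f^{\circ n}$ for all $n\geq1$. So no two members of your $\mathcal G$ ever coincide, and the pigeonhole step cannot succeed. Besides, $F^{\circ n}$ is not univalent on any fixed disc $U$ meeting $J$ once $D^n\mu(U)>1$, so these germs do not live on a common disc.

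The right objects are $\tau_n=F^{\circ n}\circ G^{-n}$. Choose the branches $G^{-n}$ compatibly along one backward $G$-orbit of $w_0\in J$ whose pull-backs of a fixed disc $B\ni w_0$ are all univalent; a $\mu$-typical backward orbit has this property. Then $\tau_{n_1}=\tau_{n_2}$, after precomposing with $G^{\circ n_2}$ on $G^{-n_2}(B)$, gives exactly $F^{\circ n_1}\circ G^{\circ(n_2-n_1)}=F^{\circ n_2}$. In the example above, $\tau_n=-\mathrm{id}$ for every $n$.

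Even with this family, the whole content is the claim that the $\tau_n$ give only finitely many germs at $w_0$. Normality with non-constant limits does not yield finiteness. You need a local rigidity statement: a $\mu$-preserving local symmetry uniformly close to another one must coincide with it. That is precisely where non-exceptionality enters, since $z^{\pm d}$, $\pm T_d$ and Latt\`es maps admit continuous families of $\mu$-preserving local symmetries. Your proposal defers exactly this step, and it is the substance of \cite{LP97}.
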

	
	\begin{Rem}
		Note that the integers $l,k$ in (\ref{ratsamemu2}) satisfy $\deg(f)^l=\deg(g)^k$. Hence if two non-exceptional rational maps $f,g\in\C(z)$ of degree $\geq2$ have the same measure of maximal entropy, then $\log_{\deg(f)}\deg(g)\in\Q_{>0}$.
		
		Unlike the polynomial case (see Theorem~\ref{polysamemu}), Ye showed \cite[Theorem~1.1]{Ye} that there exist non-exceptional rational functions $f$ and $g$ of degree $\geq2$, satisfying:
		\begin{itemize}
			\item $\mu_f=\mu_g$;
			\item $f^{\circ l}\neq\sigma\circ g^{\circ k}$ for all $l,k\in\Z_{>0}$ and all $\sigma\in\C(z)$ of degree $1$.
		\end{itemize}
	\end{Rem}
	
	\begin{Cor}\label{rat1to2}
		Theorem~\ref{thmrat}~(\ref{rat1}) implies Theorem~\ref{thmrat}~(\ref{rat2}).
	\end{Cor}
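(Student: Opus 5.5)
We need to show: assuming case (1) of Theorem~\ref{thmrat} is proved, case (2) follows. So suppose $\deg(f)=\deg(g)\geq2$ and $f$ is non-exceptional; we must show $\hat{S}_{f,g,c}$ is semi-linear.

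First I would split according to the cardinality of $\hat{Q}_{f,g,c}$. If $\#\hat{Q}_{f,g,c}<\infty$, Proposition~\ref{Qratfinite} gives semi-linearity directly, and nothing more is needed.

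Now suppose $\#\hat{Q}_{f,g,c}=\infty$. Proposition~\ref{ratmusame} then gives $\PrePer(f,\P^1(\C))=\PrePer(g,\P^1(\C))$ and $\mu_f=\mu_g$. Before applying Theorem~\ref{ratsamemu}, we need $g$ to be non-exceptional as well. I would argue this from the equality of preperiodic sets. If $g$ were exceptional, its preperiodic points would be special:
\begin{itemize}
\item for monomial type, they are (up to conjugacy) $\{0,\infty\}\cup U(\C)$, or the images $\zeta+\zeta^{-1}$ for Chebyshev maps;
\item for Latt\`es maps, they are the images of torsion points of $E$.
\end{itemize}
The fastest route is via the measure: the measure of maximal entropy of an exceptional map is absolutely continuous with respect to Lebesgue measure on a smooth curve or on $\P^1$ (smooth Julia set data). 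By Zdunik's theorem, a rational map whose maximal entropy measure is absolutely continuous with respect to Hausdorff measure of that dimension must be exceptional. Hence $\mu_f=\mu_g$ with $g$ exceptional would force $f$ to be exceptional, a contradiction. This is the step I would be most careful about, and I would cite \cite{Zdunik90} (Zdunik's rigidity theorem, which covers rational maps) to make it precise.

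With $f$ and $g$ both non-exceptional and $\mu_f=\mu_g$, Theorem~\ref{ratsamemu} gives integers $l,k\geq1$ with $f^{\circ 2l}=f^{\circ l}\circ g^{\circ k}$. Comparing degrees gives $\deg(f)^l=\deg(g)^k$. Since $\deg(f)=\deg(g)\geq2$, this forces $l=k$. Therefore $f^{\circ(l+k)}=f^{\circ 2l}=f^{\circ l}\circ g^{\circ k}$ with $l\geq0$ and $k\geq1$. This is exactly hypothesis (\ref{rat1}) of Theorem~\ref{thmrat}, so case (\ref{rat1}) yields that $\hat{S}_{f,g,c}$ is semi-linear.

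Summarizing, the expected obstacle is only the non-exceptionality of $g$. The rest is a direct chaining of Propositions~\ref{Qratfinite} and \ref{ratmusame} with Theorem~\ref{ratsamemu} and a degree comparison.
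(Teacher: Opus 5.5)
Your proposal is correct and follows the paper's proof essentially step for step: you reduce to $\#\hat{Q}_{f,g,c}=\infty$ via Proposition~\ref{Qratfinite}, get $\mu_f=\mu_g$ from Proposition~\ref{ratmusame}, deduce that $g$ is non-exceptional from \cite{Zdunik90}, apply Theorem~\ref{ratsamemu}, and use $\deg(f)=\deg(g)$ to force $l=k$, which gives hypothesis~(\ref{rat1}). The paper simply cites \cite{Zdunik90} for the step you flagged as delicate, and your dimension-of-measure argument is a correct unpacking of that citation; the detour through preperiodic points is unnecessary.
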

	\begin{proof}
		Let $f,g,c\in\C(z)$ be rational maps such that $\deg(f)=\deg(g)\geq2$ and $f$ is non-exceptional. We want to prove that the rank-one recurrence set $\hat{S}_{f,g,c}$ is semi-linear.
		
		By Proposition~\ref{Qratfinite}, we may assume that $\# \hat{Q}_{f,g,c}=\infty$. Then Proposition~\ref{ratmusame} yields $\mu_f=\mu_g$. Since $f$ is non-exceptional, it follows from \cite{Zdunik90} that $g$ is also non-exceptional. Applying Theorem~\ref{ratsamemu}, we obtain two integers $l,k\geq1$ such that
		\[f^{\circ 2l}=f^{\circ l}\circ g^{\circ k}\quad\text{and}\quad g^{\circ 2k}=g^{\circ k}\circ f^{\circ l}.\]
		Note that $k=l$ because $\deg(f)=\deg(g)$. Hence $\hat{S}_{f,g,c}$ is semi-linear by Theorem~\ref{thmrat}~(\ref{rat1}).
	\end{proof}
	
	\section{Proof of main theorems}\label{secpf}
	\subsection{Rank-two recurrence for exceptional polynomials}\label{secexcep}
	We deal with the exceptional case of Theorem~\ref{thmrk2poly} in this subsection.
	
	The following lemma is a generalized version of \cite[Lemma~4.1]{YZ26}. We state it without proof because the proof is purely elementary and identical to that of \cite[Lemma~4.1]{YZ26}.
	\begin{Lem}\label{lem4.1}
		Let $k\in\Z_{>0}$ and $\xi\in U(\C)$ be a primitive $k$-th root of unity. Let $a,b,d_1,d_2,d_3,d_4$ be integers such that $|d_1|,|d_2|\geq2$. Then for every pair $(m,n)\in\Z_{\geq0}^2$ with $|d_1|^m>|d_3|$ and $|d_2|^n>|d_4|$, the following two statements are equivalent:
		\begin{enumerate}
			\item $\exists\la\in\C^\times$, $\la^{d_1^m-d_3}=\xi^a$ and $\la^{d_2^n-d_4}=\xi^b$;
			\item $k\cdot\gcd\left(|d_1^m-d_3|,|d_2^n-d_4|\right)\mid\left(b(d_1^m-d_3)-a(d_2^n-d_4)\right)$.
		\end{enumerate}
	\end{Lem}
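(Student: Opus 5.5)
The idea is to reduce both conditions to statements about the cyclic group $\langle\xi\rangle$ of order $k$ inside $\C^\times$. Throughout I write $A=d_1^m-d_3$ and $B=d_2^n-d_4$. The hypotheses $|d_1|^m>|d_3|$ and $|d_2|^n>|d_4|$ give $A\neq0$ and $B\neq0$. Set $g=\gcd(|A|,|B|)$ and choose integers $u,v$ with $uA+vB=g$.

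For (1)$\Rightarrow$(2), suppose $\la^{A}=\xi^a$ and $\la^{B}=\xi^b$.
\begin{enumerate}
\item Raising to suitable powers gives
\[\la^{AB}=\xi^{aB}\quad\text{and}\quad\la^{AB}=\xi^{bA},\]
so $\xi^{bA-aB}=1$ and therefore $k\mid bA-aB$.
\item This alone loses a factor of $g$, so I refine it. First, $\la^{g}=\la^{uA+vB}=\xi^{ua+vb}$, so $\la^g\in\langle\xi\rangle$.
\item Write $\la^g=\xi^t$ with $t=ua+vb$. Then
\[\xi^{a}=\la^{A}=\xi^{t(A/g)}\quad\text{and}\quad\xi^{b}=\la^{B}=\xi^{t(B/g)},\]
so $a\equiv tA/g$ and $b\equiv tB/g \pmod k$.
\item The naive consequence $bA-aB\equiv t(BA/g-AB/g)=0\pmod k$ is not yet divisibility by $kg$. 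To get the full statement I use that $\la$ is determined only up to multiplication by a $g$-th root of unity. The cleaner formulation is to view $\la^{A}=\xi^a,\ \la^{B}=\xi^b$ as a pair of congruences for $\ell$ with $\la=e^{2\pi i\ell/(kg)}$ or similar, and check them there.
\end{enumerate}

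The approach I would actually carry out is to parametrize $\la$ directly. Any solution has $\la^{kg}=1$, since $\la^{gk}=(\xi^{t})^{k}=1$. So write $\la=\omega^{x}$ with $\omega$ a fixed primitive $kg$-th root of unity satisfying $\omega^{g}=\xi$. Condition (1) then becomes the linear system
\[xA\equiv ag \quad\text{and}\quad xB\equiv bg \pmod{kg}.\]
Writing $A=gA'$ and $B=gB'$ with $\gcd(A',B')=1$, this reduces to
\[xA'\equiv a \quad\text{and}\quad xB'\equiv b \pmod k.\]

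I then solve this system in two steps.
\begin{enumerate}
\item The compatibility condition is $aB'\equiv bA'\pmod k$, which is exactly $k g\mid bA-aB$, i.e.\ condition (2).
\item For sufficiency, suppose $aB'\equiv bA'\pmod k$. Using $uA'+vB'=1$, take $x=ua+vb$. Then $xA'=uaA'+vbA'\equiv uaA'+vaB'=a\pmod k$, and similarly $xB'\equiv b$. This gives a solution of (1).
\end{enumerate}

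The main thing to justify carefully is necessity in the parametrization step: that every solution $\la$ of (1) lies in $\mu_{kg}$ and can be written as $\omega^{x}$ with $\omega^{g}=\xi$. This follows from $\la^{g}\in\langle\xi\rangle$, established in the second step above. The argument uses nothing beyond Bézout and cyclic groups.
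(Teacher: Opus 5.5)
The paper does not prove this lemma (it defers to \cite[Lemma~4.1]{YZ26}), so there is no route to compare against; your final argument is correct. Writing every solution as $\lambda=\omega^x$, where $\omega$ is a primitive $kg$-th root of unity with $\omega^g=\xi$, turns (1) into the system $xA'\equiv a$, $xB'\equiv b \pmod k$. The compatibility condition $aB'\equiv bA'\pmod k$ is exactly $kg\mid bA-aB$, and the B\'ezout choice $x=ua+vb$ gives sufficiency.

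The one step you leave unjustified is that such an $\omega$ can be chosen \emph{primitive}, which you need so that every solution is a power of $\omega$. Here is one way: take a primitive $kg$-th root $\zeta$ and write $\xi=\zeta^{gj}$ with $\gcd(j,k)=1$. By the Chinese remainder theorem choose $y\equiv j\pmod k$ with $y\equiv 1$ modulo every prime dividing $g$ but not $k$. Then $\omega=\zeta^{y}$ works.

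That said, the parametrization is unnecessary, and your remark that the first attempt ``loses a factor of $g$'' is mistaken. From $\lambda^g=\xi^t$ you already obtained $a\equiv tA'$ and $b\equiv tB'\pmod k$. Hence $aB'\equiv tA'B'\equiv bA'\pmod k$, which is exactly (2); the factor $g$ disappeared only because you multiplied back through by $g$.

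Conversely, suppose (2) holds, and let $\lambda$ be any $g$-th root of $\xi^{ua+vb}$. Using $uA'+vB'=1$ and $bA'\equiv aB'\pmod k$, you get $\lambda^{A}=\xi^{(ua+vb)A'}=\xi^{a}$ and $\lambda^{B}=\xi^{(ua+vb)B'}=\xi^{b}$. This version avoids roots of unity of order $kg$ altogether. It also works verbatim for arbitrary $\alpha,\beta\in\C^\times$ in place of $\xi^a,\xi^b$, with criterion $\alpha^{B'}=\beta^{A'}$. In a write-up, drop the vague item 4 and present one of the two versions cleanly.
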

	
	\smallskip
	
	We first deal with power maps.
	\begin{Prop}\label{powerconst}
		Let $d_1, d_2\in\Z\setminus\{0,\pm1\}$ be integers, and let $\zeta,c\in U(\C)$. Define $f(z)=z^{d_1}$ and $g(z)=\zeta z^{d_2}$, which are rational maps of degree $\geq2$ (which need not be polynomials). Note that
		\[\hat{S}_{f,g,c}^2=\left\lbrace(m,n)\in\Z_{\geq0}^2\colon\exists\la\in\C^\times,\,f^{\circ m}(\la)=g^{\circ n}(\la)=c\right\rbrace\]
		because $c\in\C^\times$. Then $\hat{S}_{f,g,c}^2$ is semi-linear.
	\end{Prop}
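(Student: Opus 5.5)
Compute the iterates explicitly: $f^{\circ m}(z)=z^{d_1^m}$ and $g^{\circ n}(z)=\zeta^{e_n}z^{d_2^n}$ with $e_n=(d_2^n-1)/(d_2-1)$, interpreting negative exponents on $\C^\times$. Fix $k$ with $\zeta,c\in U_k(\C)$ and a primitive $k$-th root of unity $\xi$. Write $c=\xi^{a}$ and $\zeta=\xi^{t}$. Then $(m,n)\in\hat{S}_{f,g,c}^2$ if and only if there is $\la\in\C^\times$ with
\[\la^{d_1^m}=\xi^{a}\quad\text{and}\quad\la^{d_2^n}=\xi^{a-te_n}.\]
Since $|d_1|^m\geq1>0=|d_3|$ and $|d_2|^n>0$ (with $d_3=d_4=0$), Lemma~\ref{lem4.1} applies to every $(m,n)\in\Z_{\geq0}^2$. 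It reduces membership to the divisibility
\[k\cdot\gcd\left(|d_1|^m,|d_2|^n\right)\ \Big|\ \left((a-te_n)d_1^m-a\,d_2^n\right).\]
The aim is to show that this arithmetic condition defines a semi-linear set.

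Let $\mathcal{P}$ be the finite set of primes dividing $k d_1 d_2$. The divisibility condition splits prime by prime over $p\in\mathcal{P}$ as a condition on $p$-adic valuations. Three ingredients control it:
\begin{itemize}
\item $v_p(\gcd(|d_1|^m,|d_2|^n))=\min(m\,v_p(d_1),n\,v_p(d_2))$.
\item $v_p(d_1^m)=m\,v_p(d_1)$ and $v_p(d_2^n)=n\,v_p(d_2)$.
\item The residues $a d_2^n \bmod p^{N}$ and $e_n \bmod p^{N}$ are eventually periodic in $n$ for each fixed $N$.
\end{itemize}
Consider one prime $p$ with target exponent $E(m,n)=v_p(k)+\min(m\,v_p(d_1),n\,v_p(d_2))$. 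If $p$ divides both $d_1$ and $d_2$, the bound $E$ is a linear function of $m,n$ (a min of two linear forms). The requirement is that the valuation of $(a-te_n)d_1^m-a d_2^n$ be at least $E$. That valuation is at least $\min(v_p(d_1^m),v_p(d_2^n))=\min(m v_p(d_1),n v_p(d_2))$. So only the extra $v_p(k)$ units matter, and they are determined by finitely many digits of the leading terms. When $m v_p(d_1)$ and $n v_p(d_2)$ differ by more than $v_p(k)$, the condition reduces to a congruence modulo $p^{v_p(k)}$ on $a-te_n$ or on $a$ times a unit part, which is eventually periodic in $n$ (and in $m$). When they differ by at most $v_p(k)$, we are in a bounded-width strip around the line $m v_p(d_1)=n v_p(d_2)$, and the condition becomes a congruence on unit parts $d_1^m/p^{\ast}$ and $d_2^n/p^{\ast}$ modulo a fixed power of $p$. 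Unit parts of powers are periodic modulo $p^N$, so in each case the set is cut out by linear inequalities, linear relations and congruences in $m,n$. If $p$ divides only one of $d_1,d_2$ (or neither), then $E$ is bounded by $v_p(k)$, and the condition is a congruence modulo a fixed power of $p$ on eventually periodic quantities.

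In every case the resulting condition is expressible in Presburger arithmetic: finitely many linear inequalities, linear equations and congruence predicates $P_N$ in $m,n$, together with finitely many exceptional small values of $m$ or $n$ (handled by finite unions). The finite conjunction over $p\in\mathcal{P}$ then gives a definable set, so $\hat{S}_{f,g,c}^2$ is semi-linear by Theorem~\ref{thmGS} and Corollary~\ref{semiclosed}.

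The main obstacle is the careful $p$-adic bookkeeping in the case where $p$ divides both $d_1$ and $d_2$ near the strip $m v_p(d_1)\approx n v_p(d_2)$. In this region cancellation between the two terms can raise the valuation. I would handle it by fixing the difference $\delta=m v_p(d_1)-n v_p(d_2)\in[-v_p(k),v_p(k)]$, which gives finitely many residue/linear cases. For each fixed $\delta$, dividing out the common power of $p$ leaves a congruence among eventually periodic unit parts modulo $p^{v_p(k)}$. This fixed-$\delta$ reduction is exactly what keeps everything linear plus congruential.
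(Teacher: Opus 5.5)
Your proposal is correct. It rests on the same two ingredients as the paper. First, Lemma~\ref{lem4.1} with $d_3=d_4=0$ turns membership into the divisibility $k\gcd(|d_1|^m,|d_2|^n)\mid X(m,n)$, where $X(m,n)=(a-te_n)d_1^m-a\,d_2^n$. Second, Presburger definability via eventual periodicity of residues is concluded by Theorem~\ref{thmGS}. Where you differ is in how you unpack the divisibility.

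The paper does the following:
\begin{itemize}
\item it treats $n=0$ separately and clears the denominator of $e_n$ by multiplying by $Q_0=d_2-1$;
\item it partitions $\Z_{\geq0}\times\Z_{>0}$ into cones $C_j$ recording, for all primes $p\mid d_1d_2$ at once, whether $m\tau_p\geq n\mu_p$;
\item it splits further by the parities of $m,n$ to fix the signs;
\item it writes the exact quotient $h(m,n)$ as a fixed integer combination of powers $p^{\ell(m,n)}$ with Presburger-definable exponents, and reduces modulo the single modulus $kQ$.
\end{itemize}

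You instead localize at each prime of $kd_1d_2$ separately, and $n=0$ is included automatically since $e_0=0$.
\begin{itemize}
\item Primes of $d_1d_2$ coprime to $k$ impose nothing, by your remark that only the extra $v_p(k)$ units matter.
\item Primes dividing at most one of $d_1,d_2$ give a fixed-modulus congruence on eventually periodic sequences.
\item For primes dividing both, the condition collapses off the strip $|m v_p(d_1)-n v_p(d_2)|\leq v_p(k)$. It becomes $p^{v_p(k)}\mid a$ when $m v_p(d_1)-n v_p(d_2)\geq v_p(k)$, and $p^{v_p(k)}\mid a-te_n$ in the opposite case. The strip itself is a finite union of lines $m v_p(d_1)-n v_p(d_2)=\delta$. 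On each such line the condition is a congruence modulo $p^{v_p(k)}$ among periodic unit parts and the eventually periodic $e_n$.
\end{itemize}

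Your route absorbs signs into the unit parts, so it needs no parity split and no clearing of denominators. It also shows clearly that only the primes of $k$, and only finitely many lines, carry real content. The paper's route needs no near/far distinction, because eventual periodicity of $\ell\mapsto p^{\ell}\bmod kQ$ handles small and large exponent gaps uniformly. It also uses one global decomposition rather than a per-prime analysis followed by an intersection.

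When you write it up, state the two off-strip conditions explicitly as above, rather than through the phrase ``finitely many digits of the leading terms.''
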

	\begin{proof}
		For $m\in\Z_{\geq0}$, we compute $f^{\circ m}(z) = z^{d_1^m}$, where $f^{\circ 0}(z)=z^{d_1^0}=z$ by convention. For the power map $g$, we have $g^{\circ 0}(z)=z$, and
		\[g^{\circ n}(z)=\zeta^{1+d_2+\cdots+d_2^{n-1}}z^{d_2^n}=\zeta^{(d_2^n-1)/(d_2-1)}z^{d_2^n}\]
		for $n\in\Z_{>0}$. It is easy to see that
		\[\hat{S}_{f,g,c}^2\cap\left(\Z_{\geq0}\times\{0\}\right)=\left\lbrace (m,0)\colon m\in\Z_{\geq0}\text{ such that }c^{d_1^m-1}=1\right\rbrace\]
		is semi-linear (see the proof of Lemma~\ref{lemSs}). Thus, it remains to show that
		\[S^\prime:=\hat{S}_{f,g,c}^2\cap\left(\Z_{\geq0}\times\Z_{>0}\right)\]
		is semi-linear.
		
		\medskip
		
		Let $k\in\Z_{>0}$ be the least common multiple of the orders of the roots of unity $\zeta$ and $c$. Set $\xi=\exp(2\pi i/k)$. Let $a,e\in\{0,1,\dots,k-1\}$ be determined by $\xi^a=c$ and $\xi^e=\zeta$. Note that $(k,a,e)$ are determined by $(d_1,d_2,\zeta,c)$.
		
		Set
		\[Q_0=d_2-1\quad\text{and}\quad Q=\left|Q_0\right|.\]
		Then $Q$ is a positive integer. For $(m,n)\in\Z_{\geq0}\times\Z_{>0}$, by applying Lemma~\ref{lem4.1} with $(d_3,d_4,b)=\left(0,0,a-e(d_2^n-1)/(d_2-1)\right)$ and $(d_1,d_2,k,\xi,a)$ as above, we obtain:
		\begin{align*}
			&(m,n)\in S^\prime\text{, i.e., }\exists\la\in\C^\times,\,\la^{d_1^m}=\xi^{e(d_2^n-1)/(d_2-1)}\la^{d_2^n}=\xi^a\\
			\iff& kQ\cdot\gcd\left(\left|d_1^m\right|,\left|d_2^n\right|\right)\mid\left(aQ_0(d_1^m-d_2^n)-ed_1^m(d_2^n-1)\right).
		\end{align*}
		Define a function $h:\Z_{\geq0}\times\Z_{>0}\to\Z$ by
		\[h(m,n)=\frac{aQ_0(d_1^m-d_2^n)-ed_1^m(d_2^n-1)}{\gcd\left(\left|d_1^m\right|,\left|d_2^n\right|\right)}.\]
		Then
		\begin{equation}\label{Sprime}
			S^\prime=\left\lbrace (m,n)\in\Z_{\geq0}\times\Z_{>0}\colon h(m,n)\equiv 0\Mod{kQ}\right\rbrace. 
		\end{equation}
		
		\medskip
		
		Define
		\[P:=\left\lbrace p\in\Z_{\geq2}\colon p\text{ is prime and }p\mid d_1d_2\right\rbrace,\]
		which is a non-empty finite set by the fundamental theorem of arithmetic. Let $\varepsilon_1,\varepsilon_2\in\{\pm1\}$ be the signs of $d_1$ and $d_2$, respectively. Write
		\[d_1=\varepsilon_1\prod_{p\in P}p^{\tau_p}\quad\text{and}\quad d_2=\varepsilon_2\prod_{p\in P}p^{\mu_p},\]
		where $\tau_p,\mu_p\in\Z_{\geq0}$ with $\tau_p+\mu_p>0$ for $p\in P$. Then for $(m,n)\in\Z_{\geq0}\times\Z_{>0}$, we have
		\begin{equation}\label{gcdmn}
			\gcd\left(\left|d_1^m\right|,\left|d_2^n\right|\right)=\prod_{p\in P}p^{\min\{m\tau_p,n\mu_p\}}.
		\end{equation}
		
		Note that we can partition $\Z_{\geq0}\times\Z_{>0}$ by
		\begin{equation}\label{part}
			\Z_{\geq0}\times\Z_{>0}=\bigsqcup_{j\in J}C_j,
		\end{equation}
		where $J$ is a non-empty finite index set, and for each $j\in J$, there is a subset $P_j\subseteq P$ such that
		\begin{equation}\label{cone}
			C_j=\left\lbrace (m,n)\in\Z_{\geq0}\times\Z_{>0}\colon \forall p\in P,\,m\tau_p\geq n\mu_p\text{ if and only if }p\in P_j\right\rbrace
		\end{equation}
		and $C_j\neq\emptyset$. Then the map $J\to 2^P,\,j\mapsto P_j$ is injective because \eqref{part} is a disjoint union. By \eqref{part}, it suffices to prove that
		\[S^\prime_j:=S^\prime\cap C_j\]
		is semi-linear for all $j\in J$.
		
		\smallskip
		
		Fix an arbitrary $j\in J$. The goal is to show the semi-linearity of $S^\prime_j$.
		
		Note that $C_j$ is the set of integral points contained in a rational polyhedral cone $\subseteq\R_{\geq0}\times\R_{\geq1}$, and that $C_j$ is definable in the Presburger arithmetic $\Pr$. For $p\in P$, define a function $e^j_p:C_j\to\Z_{\geq0}$ by
		\[e^j_p(m,n)=\tau_p m-\mu_p n\,\text{ if }\,p\in P_j,\quad\text{and}\quad e^j_p(m,n)=\mu_p n-\tau_p m\,\text{ if }\,p\in P\setminus P_j.\]
		By \eqref{cone}, for all $p\in P$, the function $e^j_p:C_j\to\Z_{\geq0}$ is definable in $\Pr$.
		
		\smallskip
		
		Define
		\[T=\left\lbrace (0,0),(0,1),(1,0),(1,1)\right\rbrace.\]
		For $t=(t_1,t_2)\in T$, define
		\[C_{j,t}=\left\lbrace(m,n)\in C_j\colon 2\mid(m+t_1)\text{ and }2\mid(n+t_2)\right\rbrace,\]
		which is also definable in $\Pr$. We have a decomposition
		\[C_j=\bigsqcup_{t\in T}C_{j,t}.\]
		
		Take an arbitrary $t=(t_1,t_2)\in T$. Let $(m,n)\in C_{j,t}$. By \eqref{gcdmn}, we obtain
		\[\gcd\left(\left|d_1^m\right|,\left|d_2^n\right|\right)=\left(\prod_{p\in P_j}p^{n\mu_p}\right)\left(\prod_{p\in P\setminus P_j}p^{m\tau_p}\right).\]
		Then $h(m,n)$ is equal to
		\[(aQ_0+e)\varepsilon_1^{t_1}\prod_{p\in P_j}p^{e^j_p(m,n)}-aQ_0\varepsilon_2^{t_2}\prod_{p\in P\setminus P_j}p^{e^j_p(m,n)}-e\varepsilon_1^{t_1}\varepsilon_2^{t_2}\left(\prod_{p\in P_j}p^{\tau_p m}\right)\left(\prod_{p\in P\setminus P_j}p^{\mu_p n}\right).\]
		Observe that for all $p\in P$, the functions
		\[(m,n)\mapsto\tau_p m\quad\text{and}\quad (m,n)\mapsto\mu_p n\]
		from $\Z_{\geq0}^2$ to $\Z_{\geq0}$ are also definable in $\Pr$.
		
		For every integer $u,v\in\Z$, since $(u^n \Mod{kQ})_{n=0}^\infty$ is an eventually periodic sequence, the set
		\begin{equation}\label{epcong}
			\left\lbrace n\in\Z_{\geq0}\colon u^n \equiv v \Mod{kQ}\right\rbrace\quad\text{is semi-linear.}
		\end{equation}
		(Here we set $0^0=1$.) Consider the congruence equation
		\begin{equation}\label{cong}
			(aQ_0+e)\varepsilon_1^{t_1}\prod_{p\in P_j}x_p-aQ_0\varepsilon_2^{t_2}\prod_{p\in P\setminus P_j}y_p-e\varepsilon_1^{t_1}\varepsilon_2^{t_2}\prod_{p\in P}z_p\equiv 0\Mod{kQ}
		\end{equation}
		in $2\#P$ variables $\left((x_p)_{p\in P_j},(y_p)_{p\in P\setminus P_j},(z_p)_{p\in P}\right)$. Let
		\[W_t\subseteq\{0,1,\dots,kQ-1\}^{2\# P}\]
		be the set of all solutions of \eqref{cong} over the least residue system $\{0,1,\dots,kQ-1\}$ of $kQ$. Let
		\[w=\left((\alpha_p)_{p\in P_j},(\beta_p)_{p\in P\setminus P_j},(\gamma_p)_{p\in P}\right)\in W_t\]
		be a solution of \eqref{cong}. Define $S_{j,t,w}$ to be the set of all pairs $(m,n)\in C_{j,t}$ satisfying
		\[p^{e^j_p(m,n)}\equiv\alpha_p\Mod{kQ},\quad p^{\tau_p m}\equiv\gamma_p\Mod{kQ},\quad\text{for }p\in P_j,\]
		and
		\[p^{e^j_p(m,n)}\equiv \beta_p\Mod{kQ},\quad p^{\mu_p n}\equiv \gamma_p\Mod{kQ},\quad\text{for }p\in P\setminus P_j.\]
		Since $e^j_p$ ($p\in P$) are definable functions on $C_{j,t}$, by \eqref{epcong} we deduce that $S_{j,t,w}$ is definable in $\Pr$. By \eqref{Sprime}, we see that
		\[S^\prime_j=S^\prime\cap C_j=\bigsqcup_{t\in T}\bigsqcup_{w\in W_t}S_{j,t,w}\]
		is also definable in $\Pr$. Hence $S^\prime_j$ is semi-linear by Theorem~\ref{thmGS}.
	\end{proof}
	
	\begin{Cor}\label{polypower}
		Let $d_1,d_2\geq2$ and $d_3\geq0$ be integers, and let $\zeta,\eta\in U(\C)$. Define $f(z)=z^{d_1}$, $g(z)=\zeta z^{d_2}$, and $c(z)=\eta z^{d_3}$. Here $c=\eta$ if $d_3=0$. Then
		\[S_{f,g,c}^2=\left\lbrace(m,n)\in\Z_{\geq0}^2\colon\exists\la\in\C,\,f^{\circ m}(\la)=g^{\circ n}(\la)=c(\la)\right\rbrace\]
		is semi-linear.
	\end{Cor}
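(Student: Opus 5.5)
We plan to split $S_{f,g,c}^2$ according to whether the witness $\la$ is $0$ or lies in $\C^\times$, and then use Proposition~\ref{powerconst} and Lemma~\ref{lem4.1}.

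\emph{The witness $\la=0$.} If $d_3\geq1$, then $\la=0$ is a common zero of $f^{\circ m}$, $g^{\circ n}$ and $c$ for every $(m,n)$, because $f(0)=g(0)=c(0)=0$. Hence $S_{f,g,c}^2=\Z_{\geq0}^2$, which is semi-linear, and this case is done. If $d_3=0$, then $c=\eta\neq0$, while $f^{\circ m}(0)=g^{\circ n}(0)=0$, so $\la=0$ never works. In that case
\[S_{f,g,c}^2=\hat{S}_{f,g,\eta}^2=\left\lbrace(m,n)\colon\exists\la\in\C^\times,\,f^{\circ m}(\la)=g^{\circ n}(\la)=\eta\right\rbrace,\]
and this set is semi-linear by Proposition~\ref{powerconst}. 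So from now on we may assume $d_3\geq1$.

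\emph{Checking that the reduction is complete.} Since every $(m,n)$ already lies in $S_{f,g,c}^2$ when $d_3\geq1$, there is nothing more to prove there. The only genuinely nontrivial case, $d_3=0$, is exactly Proposition~\ref{powerconst}. Its hypotheses hold: $d_1,d_2\geq2$ lie in $\Z\setminus\{0,\pm1\}$, the element $\zeta$ is a root of unity, and $c=\eta\in U(\C)$.

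\emph{Where difficulty could arise.} The only point needing care is to avoid the tempting longer route for $d_3\geq1$. That route would analyse solutions $\la\in\C^\times$ of $\la^{d_1^m-d_3}=\eta^{-1}$ and $\la^{d_2^n-d_3}=\eta^{-1}\zeta^{-(d_2^n-1)/(d_2-1)}$ through Lemma~\ref{lem4.1}, and it is exactly the delicate modified set $\tilde{S}_{f,g,c}^2$ of Proposition~\ref{powertil}. It is unnecessary here, because the trivial witness $\la=0$ already fills all of $\Z_{\geq0}^2$. So there is no substantial obstacle, and the proof amounts to this case split together with Proposition~\ref{powerconst}.
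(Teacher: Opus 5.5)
Your proof is correct and is the paper's argument: for $d_3\geq1$ the witness $\la=0$ gives $S_{f,g,c}^2=\Z_{\geq0}^2$, and for $d_3=0$ one has $c=\eta\in\C^\times$, so $\la=0$ (and $\la=\infty$) never work, $S_{f,g,c}^2=\hat{S}_{f,g,c}^2$, and Proposition~\ref{powerconst} applies. The sentence ``from now on we may assume $d_3\geq1$'' and the mention of Lemma~\ref{lem4.1} are superfluous but harmless.
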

	\begin{proof}
		Suppose first that $d_3\geq1$. Then for every $(m,n)\in\Z_{\geq0}^2$, we have
		\[f^{\circ m}(0)=g^{\circ n}(0)=c(0)=0.\]
		Thus, $S_{f,g,c}^2=\Z_{\geq0}^2$ is linear.
		
		Suppose now $d_3=0$; then $c=\eta$ is a root of unity. Since $c\in\C^\times$, we deduce that $S_{f,g,c}^2=\hat{S}_{f,g,c}^2$ and $S_{f,g,c}^2$ is semi-linear by Proposition~\ref{powerconst}.
	\end{proof}
	
	\medskip
	
	Next we proceed to deal with Chebyshev polynomials.
	Let $\nu_2:\Q\to\Z\cup\{\infty\}$ be the usual $2$-adic valuation on $\Q$.
	\begin{Prop}\label{powerpm1not0}
		Let $r,s\geq2$ and $t\geq1$ be integers, and let $\varepsilon_1,\varepsilon_2\in\{\pm1\}$.
		Set $\mu_r=\nu_2(r)$, $\mu_s=\nu_2(s)$, and $\mu_t=\nu_2(t)$. Let $m_0\geq1$ be the minimal integer such that
		\[r^{m_0}>t\quad\text{and}\quad\mu_r(\mu_r m_0-\mu_t-2)\geq0,\]
		and let $n_0\geq1$ be the minimal integer such that
		\[s^{n_0}>t\quad\text{and}\quad\mu_s(\mu_s n_0-\mu_t-2)\geq0.\]
		Define $V(r,s,t,\varepsilon_1,\varepsilon_2)$ to be the set
		\[\left\lbrace (m,n)\in\Z_{\geq m_0}\times\Z_{\geq n_0}\colon\exists\la\in\C^\times,\exists\eta,\kappa\in\{\pm1\},\la^{r^m-\eta t}=\varepsilon_1\text{ and }\la^{s^n-\kappa t}=\varepsilon_2\right\rbrace.\]
		Then either $V(r,s,t,\varepsilon_1,\varepsilon_2)=\emptyset$, or $V(r,s,t,\varepsilon_1,\varepsilon_2)=\Z_{\geq m_0}\times\Z_{\geq n_0}$. In particular, $V(r,s,t,\varepsilon_1,\varepsilon_2)$ is semi-linear.
	\end{Prop}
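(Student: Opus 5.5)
The plan is to reduce the existence question to a comparison of $2$-adic valuations via Lemma~\ref{lem4.1}, and then check that these valuations, or at least their relative order, do not depend on $(m,n)$ once $m\geq m_0$ and $n\geq n_0$.

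\textbf{Reduction to valuations.} Write $\varepsilon_1=(-1)^a$ and $\varepsilon_2=(-1)^b$ with $a,b\in\{0,1\}$. Fix $(m,n)\in\Z_{\geq m_0}\times\Z_{\geq n_0}$ and signs $\eta,\kappa$, and put $A=r^m-\eta t$ and $B=s^n-\kappa t$. Since $r^{m}>t$ and $s^n>t$, we have $A,B>0$. Lemma~\ref{lem4.1}, applied with $k=2$, $\xi=-1$, $d_3=\eta t$ and $d_4=\kappa t$, says the pair $(\eta,\kappa)$ works if and only if $2\gcd(A,B)\mid bA-aB$. The odd part of $\gcd(A,B)$ always divides $bA-aB$. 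So, writing $\alpha=\nu_2(A)$ and $\beta=\nu_2(B)$, the condition becomes $\nu_2(bA-aB)\geq\min(\alpha,\beta)+1$. This splits by the values of $a,b$:
\begin{itemize}
\item $a=b=0$: the condition always holds (take $\la=1$).
\item $(a,b)=(1,0)$: the condition is $\beta>\alpha$.
\item $(a,b)=(0,1)$: the condition is $\alpha>\beta$.
\item $a=b=1$: the condition is $\nu_2(A-B)>\min(\alpha,\beta)$, which is equivalent to $\alpha=\beta$.
\end{itemize}
Hence $(m,n)\in V$ exactly when the prescribed order relation holds between some $\alpha\in\mathcal A_m:=\{\nu_2(r^m\pm t)\}$ and some $\beta\in\mathcal B_n:=\{\nu_2(s^n\pm t)\}$.

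\textbf{Describing $\mathcal A_m$ for $m\geq m_0$.} There are three cases.
\begin{itemize}
\item If $r$ is even, then $\mu_r m\geq\mu_t+2>\mu_t$ by the choice of $m_0$, so $\nu_2(r^m\pm t)=\mu_t$ for both signs. Thus $\mathcal A_m=\{\mu_t\}$.
\item If $r$ is odd and $t$ is even, then $r^m\pm t$ is odd, so $\mathcal A_m=\{0\}$.
\item If $r$ and $t$ are both odd, then $r^m-t$ and $r^m+t$ are both even and their sum $2r^m$ has valuation $1$. So one of them has valuation exactly $1$ and the other has valuation $x_m\geq2$. Thus $\mathcal A_m=\{1,x_m\}$ with $x_m\geq 2$, where $x_m$ may vary with $m$.
\end{itemize}
The same description holds for $\mathcal B_n$, with $s$ in place of $r$.

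\textbf{Independence of $(m,n)$.} This is the main point, since $x_m$ is the only non-constant quantity. If both $\mathcal A_m$ and $\mathcal B_n$ are of the third type, each of the relations $\alpha<\beta$, $\alpha>\beta$ and $\alpha=\beta$ is realizable: use $(1,x)$, $(x,1)$ and $(1,1)$ respectively. So every $(m,n)$ lies in $V$. If exactly one of them, say $\mathcal A_m$, is of the third type, then $t$ is odd, so $\mu_t=0$ and $\mathcal B_n=\{0\}$. Every element of $\mathcal A_m$ is $\geq1>0$, so the relation between $\mathcal A_m$ and $\mathcal B_n$ is the same for all $(m,n)$. In the remaining case both sets are constant singletons. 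In every case, whether $(m,n)\in V$ is independent of $(m,n)$. Therefore $V$ is either empty or all of $\Z_{\geq m_0}\times\Z_{\geq n_0}$, and both of these sets are linear.
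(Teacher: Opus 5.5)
Your proof is correct and follows the same route as the paper: Lemma~\ref{lem4.1} with $k=2$, $\xi=-1$ reduces membership to a comparison of $\nu_2(r^m\pm t)$ with $\nu_2(s^n\pm t)$. The choice of $m_0,n_0$ (when $r$ or $s$ is even), together with the mod~$4$ observation when $r,t$ or $s,t$ are both odd, then makes that comparison independent of $(m,n)$. The only difference is organizational: you compute the valuation sets $\mathcal A_m,\mathcal B_n$ once and treat all four sign patterns uniformly, whereas the paper runs a separate parity case analysis for each $(\varepsilon_1,\varepsilon_2)$.
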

	\begin{proof}
		If $\varepsilon_1=\varepsilon_2=1$, then taking $\la=1$ shows that $V(r,s,t,1,1)=\Z_{\geq m_0}\times\Z_{\geq n_0}$. Suppose now that $\varepsilon_1=-1$ or $\varepsilon_2=-1$.
		
		\medskip
		
		\textbf{Case (1).} Exactly one of $\varepsilon_1$ and $\varepsilon_2$ is $-1$.
		
		Without loss of generality, we may assume that $\varepsilon_1=1$ and $\varepsilon_2=-1$. Applying Lemma~\ref{lem4.1} with $(k,\xi,a,b)=(2,-1,0,1)$, for every $(m,n)\in\Z_{\geq m_0}\times\Z_{\geq n_0}$, we have
		\begin{equation}\label{case1}
			(m,n)\in V(r,s,t,1,-1)\iff\exists\eta,\kappa\in\{\pm1\},\,\nu_2(s^n-\kappa t)<\nu_2(r^m-\eta t).
		\end{equation}
		
		If $r\not\equiv t\Mod{2}$, then $\nu_2(r^m-\eta t)=0$ for all $(m,n)\in\Z_{\geq m_0}\times\Z_{\geq n_0}$ and $\eta\in\{\pm1\}$; by \eqref{case1} we deduce that $V(r,s,t,1,-1)=\emptyset$. Suppose now that $r\equiv t\Mod{2}$.
		
		\smallskip
		
		\textbf{Case (1.1).} $(\varepsilon_1,\varepsilon_2)=(1,-1)$, and both $r$ and $t$ are even.
		
		If $s$ is odd, then
		\[\nu_2(s^n-t)=0<1\leq\nu_2(r^m-t)\]
		for all $(m,n)\in\Z_{\geq m_0}\times\Z_{\geq n_0}$. It follows from \eqref{case1} that
		\[V(r,s,t,1,-1)=\Z_{\geq m_0}\times\Z_{\geq n_0},\]
		which is semi-linear. Suppose now that $s$ is even.
		
		Then $r$, $t$, and $s$ are all assumed to be even. Let $(m,n)\in\Z_{\geq m_0}\times\Z_{\geq n_0}$ be arbitrary. Since
		\[\mu_r=\nu_2(r)\geq1\quad\text{and}\quad\mu_s=\nu_2(s)\geq1,\]
		we have
		\begin{equation}\label{mut+2}
			\min\{\nu_2(r^m),\nu_2(s^n)\}\geq\min\left\lbrace\mu_r m_0,\mu_s n_0\right\rbrace\geq\mu_t+2=\nu_2(t)+2
		\end{equation}
		by the definition of $(m_0,n_0)$. Then for all $\eta,\kappa\in\{\pm1\}$, we obtain
		\begin{equation}\label{case11}
			\nu_2(s^n-\kappa t)=\nu_2(r^m-\eta t)=\nu_2(t)=\mu_t.
		\end{equation}
		By \eqref{case1} and \eqref{case11}, we deduce that $V(r,s,t,1,-1)=\emptyset$.
		
		\smallskip
		
		\textbf{Case (1.2).} $(\varepsilon_1,\varepsilon_2)=(1,-1)$, and both $r$ and $t$ are odd.
		
		If $s$ is even, then for all $(m,n)\in\Z_{\geq m_0}\times\Z_{\geq n_0}$ we have
		\[\nu_2(s^n-t)=0<1\leq\nu_2(r^m-\eta t);\]
		by \eqref{case1} we deduce that $V(r,s,t,1,-1)=\Z_{\geq m_0}\times\Z_{\geq n_0}$. Suppose now $s$ is odd.
		
		Then $r$, $t$, and $s$ are all assumed to be odd. Let $(m,n)\in\Z_{\geq m_0}\times\Z_{\geq n_0}$ be arbitrary. Working modulo $4$, we can take $\eta_0,\kappa_0\in\{\pm1\}$ (that depend on $m$ and $n$) such that
		\[s^n-\kappa_0 t\equiv2\Mod{4}\quad\text{and}\quad r^m-\eta_0 t\equiv0\Mod{4},\]
		hence
		\begin{equation}\label{case12}
			\nu_2(s^n-\kappa_0 t)=1<2\leq\nu_2(r^m-\eta_0 t).
		\end{equation}
		By \eqref{case1} and \eqref{case12}, we deduce that $V(r,s,t,1,-1)=\Z_{\geq m_0}\times\Z_{\geq n_0}$.
		
		\medskip
		
		\textbf{Case (2).} $\varepsilon_1=\varepsilon_2=-1$.
		
		For every $(m,n)\in\Z_{\geq m_0}\times\Z_{\geq n_0}$, applying Lemma~\ref{lem4.1} with $(k,\xi,a,b)=(2,-1,1,1)$, the condition that $(m,n)\in V(r,s,t,-1,-1)$ is equivalent to
		\begin{equation}\label{case2}
			\exists\eta,\kappa\in\{\pm1\},\,\min\left\lbrace \nu_2(s^n-\kappa t),\nu_2(r^m-\eta t)\right\rbrace<\nu_2(r^m-s^n-(\eta-\kappa)t).
		\end{equation}
		
		\smallskip
		
		\textbf{Case (2.1).} $(\varepsilon_1,\varepsilon_2)=(-1,-1)$, and $u\not\equiv t\Mod{2}$ for some $u\in\{r,s\}$.
		
		Without loss of generality, we may assume that $s\not\equiv t\Mod{2}$. Then for all $(m,n)\in\Z_{\geq m_0}\times\Z_{\geq n_0}$ and all $\eta,\kappa\in\{\pm1\}$, we have
		\begin{equation}\label{case21}
			\min\left\lbrace \nu_2(s^n-\kappa t),\nu_2(r^m-\eta t)\right\rbrace=\nu_2(s^n-\kappa t)=0.
		\end{equation}
		
		If $r\equiv s\Mod{2}$, then for all $(m,n)\in\Z_{\geq m_0}\times\Z_{\geq n_0}$ we have $\nu_2(r^m-s^n)\geq1$; by \eqref{case2} (with $\eta=\kappa$) and \eqref{case21}, we deduce that $V(r,s,t,-1,-1)=\Z_{\geq m_0}\times\Z_{\geq n_0}$ which is semi-linear. Suppose now that $r\not\equiv s\Mod{2}$.
		
		Then for all $(m,n)\in\Z_{\geq m_0}\times\Z_{\geq n_0}$ and all $\eta,\kappa\in\{\pm1\}$, we have
		\[\nu_2(r^m-s^n-(\eta-\kappa)t)=0;\]
		by \eqref{case2}, we deduce that $V(r,s,t,-1,-1)=\emptyset$.
		
		\smallskip
		
		\textbf{Case (2.2).} $(\varepsilon_1,\varepsilon_2)=(-1,-1)$, and $r\equiv s\equiv t\Mod{2}$.
		
		Suppose first that $r$, $s$, and $t$ are all even. Let $(m,n)\in\Z_{\geq m_0}\times\Z_{\geq n_0}$ be arbitrary. Clearly, we can take $\eta_0\in\{\pm1\}$ (that depends on $m$) such that
		\begin{equation}\label{case221}
			\nu_2(r^m-\eta_0t)\leq\nu_2(2t)=\mu_t+1.
		\end{equation}
		Since $\mu_r,\mu_t\geq1$, as in case (1.1) we have $\min\{\nu_2(r^m),\nu_2(s^n)\}\geq\mu_t+2$ (see \eqref{mut+2}). Then
		\begin{equation}\label{case221p}
			\nu_2(r^m-s^n)\geq\min\{\nu_2(r^m),\nu_2(s^n)\}>\mu_t+1.
		\end{equation}
		By \eqref{case2}, \eqref{case221}, and \eqref{case221p}, we conclude that $V(r,s,t,-1,-1)=\Z_{\geq m_0}\times\Z_{\geq n_0}$.
		
		Suppose now that $r$, $s$, and $t$ are all odd. Let $(m,n)\in\Z_{\geq m_0}\times\Z_{\geq n_0}$ be arbitrary. Working modulo $4$, we can take $\eta_0,\kappa_0\in\{\pm1\}$ (that depend on $m$ and $n$) such that
		\[s^n-\kappa_0 t\equiv r^m-\eta_0 t\equiv2\Mod{4},\]
		hence
		\begin{equation}\label{case222}
			\nu_2(s^n-\kappa_0 t)=\nu_2(r^m-\eta_0 t)=1\quad\text{and}\quad\nu_2(r^m-s^n-(\eta_0-\kappa_0)t)\geq2.
		\end{equation}
		By \eqref{case2} and \eqref{case222}, we deduce that $V(r,s,t,-1,-1)=\Z_{\geq m_0}\times\Z_{\geq n_0}$.
	\end{proof}
	
	\begin{Prop}\label{Cheby}
		Let $r,s\geq2$ and $t\geq1$ be integers, and let $\varepsilon_1,\varepsilon_2,\varepsilon_3\in\{\pm1\}$. Define
		\[f(z)=\varepsilon_1 T_r(z),\quad g(z)=\varepsilon_2 T_s(z),\quad\text{and}\quad c(z)=\varepsilon_3 T_t(z),\]
		where we set $T_1(z)=z$. Then $S_{f,g,c}^2$ is semi-linear.
	\end{Prop}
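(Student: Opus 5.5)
By Lemma~\ref{lemSs}, the rows $m<m_0$ and columns $n<n_0$ form a semi-linear set, so I would only treat pairs $(m,n)$ with $m\ge m_0$ and $n\ge n_0$, where $m_0,n_0$ are as in Proposition~\ref{powerpm1not0}. The plan is to lift the equation $f^{\circ m}(\la)=g^{\circ n}(\la)=c(\la)$ through the semi-conjugacy $\pi(z)=z+z^{-1}$ and \eqref{piTd}. Since $\pi$ is surjective from $\C^\times$ onto $\C$, every $\la\in\C$ can be written as $\la=\pi(\mu)$ with $\mu\in\C^\times$. Then
\[
f^{\circ m}(\la)=\pi\bigl(\alpha_m\mu^{r^m}\bigr),\quad g^{\circ n}(\la)=\pi\bigl(\beta_n\mu^{s^n}\bigr),\quad c(\la)=\pi\bigl(\varepsilon_3\mu^t\bigr),
\]
where $\alpha_m=\varepsilon_1^{(r^m-1)/(r-1)}$ and $\beta_n=\varepsilon_2^{(s^n-1)/(s-1)}$ lie in $\{\pm1\}$. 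Using $\pi(x)=\pi(y)\iff y\in\{x,x^{-1}\}$, the condition $(m,n)\in S_{f,g,c}^2$ becomes: there exist $\mu\in\C^\times$ and $\eta,\kappa\in\{\pm1\}$ with
\[
\mu^{r^m-\eta t}=\varepsilon_3\alpha_m \quad\text{and}\quad \mu^{s^n-\kappa t}=\varepsilon_3\beta_n .
\]
To see this, note that $\alpha_m\mu^{r^m}=(\varepsilon_3\mu^t)^{\pm1}$, and inverting $\mu$ if necessary handles the sign pattern consistently. The point needing care is that the same $\mu$ must serve both equations, with the exponent signs chosen independently. Replacing $\mu$ by $\mu^{-1}$ flips both signs at once, so the choice of $\eta$ and $\kappa$ remains independent. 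I would check this reduction carefully.

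The right-hand sides $\varepsilon_3\alpha_m$ and $\varepsilon_3\beta_n$ depend only on $m$ and $n$ modulo $2$. To see this, note that $(r^m-1)/(r-1)=1+r+\cdots+r^{m-1}$ has parity $m$ when $r$ is odd and is odd when $r$ is even, and similarly for $s$. So I would split $\Z_{\geq m_0}\times\Z_{\geq n_0}$ into the four Presburger-definable parity classes $C_{u,v}=\{m\equiv u,\ n\equiv v \Mod 2\}$. On each class the pair of signs $(\varepsilon_1',\varepsilon_2')=(\varepsilon_3\alpha_m,\varepsilon_3\beta_n)$ is constant, so
\[
S_{f,g,c}^2\cap C_{u,v}=V(r,s,t,\varepsilon_1',\varepsilon_2')\cap C_{u,v}.
\]
By Proposition~\ref{powerpm1not0}, $V(r,s,t,\varepsilon_1',\varepsilon_2')$ is either empty or all of $\Z_{\geq m_0}\times\Z_{\geq n_0}$, so each intersection is empty or $C_{u,v}$ itself. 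Hence each piece is semi-linear. Together with Lemma~\ref{lemSs} and Corollary~\ref{semiclosed}, this gives the semi-linearity of $S_{f,g,c}^2$.

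I expect the main obstacle to be the careful justification of the lifting equivalence, in particular handling the choice of preimage $\mu$ together with the two independent inversion choices. The parity bookkeeping for $\alpha_m$ and $\beta_n$ is routine.
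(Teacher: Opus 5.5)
Your proposal is correct and follows the paper's proof essentially step for step. You restrict to $\Z_{\geq m_0}\times\Z_{\geq n_0}$ via Lemma~\ref{lemSs}, lift through $\pi(z)=z+z^{-1}$ to the system $\mu^{r^m-\eta t}=\varepsilon_3\alpha_m$, $\mu^{s^n-\kappa t}=\varepsilon_3\beta_n$, split by the parities of $m$ and $n$, and apply Proposition~\ref{powerpm1not0} on each parity class.

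One small correction to your justification: no inversion of $\mu$ is needed. Inverting $\mu$ actually preserves $\eta$ and $\kappa$ rather than flipping them. The independent signs come directly from applying $\pi(x)=\pi(y)\iff y\in\{x,x^{-1}\}$ separately to $\pi(\alpha_m\mu^{r^m})=\pi(\varepsilon_3\mu^t)$ and to $\pi(\beta_n\mu^{s^n})=\pi(\varepsilon_3\mu^t)$, with the same $\mu$.
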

	\begin{proof}
		Set $\mu_r=\nu_2(r)$, $\mu_s=\nu_2(s)$, and $\mu_t=\nu_2(t)$. Let $m_0,n_0\geq1$ be as given in Proposition~\ref{powerpm1not0}. By Lemma~\ref{lemSs}, it suffices to prove that
		\[S^\prime:=S_{f,g,c}^2\cap\left(\Z_{\geq m_0}\times\Z_{\geq n_0}\right)\]
		is semi-linear.
		
		Let $\pi:\P^1_\C\to\P^1_\C$ be the endomorphism given by $\pi(z)=z+z^{-1}$. Note that for every $\delta\in\C$, the preimage set $\pi^{-1}(\delta)$ equals $\{\la,\la^{-1}\}$ for some $\la\in\C^\times$, and conversely, $\pi(\la)=\pi(\la^{-1})\in\C$ for every $\la\in\C^\times$. Define
		\[F(z)=\varepsilon_1 z^{r},\quad G(z)=\varepsilon_2 z^{s},\quad\text{and}\quad h(z)=\varepsilon_3 z^{t}.\]
		Let $\omega_1,\omega_2\in\{0,1\}$ such that
		\[r\equiv \omega_1\Mod{2}\quad\text{and}\quad s\equiv\omega_2\Mod{2}.\]
		For $(m,n)\in\Z_{\geq m_0}\times\Z_{\geq n_0}$, by \eqref{piTd} we obtain
		\begin{align*}
			&(m,n)\in S^\prime\text{, i.e., }\exists\delta\in\C,\,f^{\circ m}(\delta)=g^{\circ n}(\delta)=c(\delta)\\
			\iff&\exists\la\in\C^\times,\,\pi\circ F^{\circ m}(\la)=\pi\circ G^{\circ n}(\la)=\pi\circ h(\la)\\
			\iff&\exists\la\in\C^\times,\,\exists\eta,\kappa\in\{\pm1\},\,F^{\circ m}(\la)=h(\la)^\eta\text{ and }G^{\circ n}(\la)=h(\la)^\kappa\\
			\iff&\exists\la\in\C^\times,\,\exists\eta,\kappa\in\{\pm1\},\,\la^{r^m-\eta t}=\varepsilon_1^{1+\omega_1(m-1)}\varepsilon_3\text{ and }\la^{s^n-\kappa t}=\varepsilon_2^{1+\omega_2(n-1)}\varepsilon_3.
		\end{align*}
		
		Set
		\[J=\{(0,0),(0,1),(1,0),(1,1)\}.\]
		Let $j=(j_1,j_2)\in J$ be arbitrary. Define
		\[Z_j=\left(\Z_{\geq m_0}\times\Z_{\geq n_0}\right)\cap\left(\left(2\Z_{\geq0}+j_1\right)\times\left(2\Z_{\geq0}+j_2\right)\right)\quad\text{and}\quad S^\prime_j=S_{f,g,c}^2\cap Z_j.\]
		Define
		\[\varepsilon_{1j}=\varepsilon_1^{1+\omega_1(j_1-1)}\varepsilon_3\in\{\pm1\}\quad\text{and}\quad \varepsilon_{2j}=\varepsilon_2^{1+\omega_2(j_2-1)}\varepsilon_3\in\{\pm1\}.\]
		Then we obtain
		\begin{align*}
			S^\prime_j&=\left\lbrace (m,n)\in Z_j\colon \exists\la\in\C^\times,\,\exists\eta,\kappa\in\{\pm1\},\,\la^{r^m-\eta t}=\varepsilon_{1j}\text{ and }\la^{s^n-\kappa t}=\varepsilon_{2j}\right\rbrace\\
			&=V(r,s,t,\varepsilon_{1j},\varepsilon_{2j})\cap Z_j,
		\end{align*}
		where $V(r,s,t,\varepsilon_{1j},\varepsilon_{2j})$ is as in Proposition~\ref{powerpm1not0}. Clearly, $Z_j$ is definable in $\Pr$. Hence we conclude that $S^\prime_j=V(r,s,t,\varepsilon_{1j},\varepsilon_{2j})\cap Z_j$ is semi-linear by Theorem~\ref{thmGS} and Proposition~\ref{powerpm1not0}. Therefore,
		\[S^\prime=\bigsqcup_{j\in J}S^\prime_j\]
		is also semi-linear. The proof is finished.
	\end{proof}
	
	At the end of this subsection, we prove Proposition~\ref{powertil} on the modified recurrence set for certain power maps. We need the following elementary lemma, which is the $2$-adic case of the folklore lifting-the-exponent (LTE) lemma. See \cite{LTE} for a proof.
	\begin{Lem}[$2$-adic LTE lemma]\label{2LTE}
		Let $x,y$ be odd integers, and $n\geq1$ be a positive integer.
		\begin{enumerate}
			\item If $n$ is even, then $\nu_2(x^n-y^n)=\nu_2(n)+\nu_2(x-y)+\nu_2(x+y)-1$ and $\nu_2(x^n+y^n)=1$.
			\item If $n$ is odd, then $\nu_2(x^n-y^n)=\nu_2(x-y)$ and $\nu_2(x^n+y^n)=\nu_2(x+y)$.
		\end{enumerate} 
	\end{Lem}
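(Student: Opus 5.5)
The statement is the $2$-adic LTE lemma, and I plan to prove it by induction on $\nu_2(n)$, peeling off factors of $2$ using the factorization $x^{2k}-y^{2k}=(x^k-y^k)(x^k+y^k)$. The only arithmetic input is the behaviour of squares of odd integers modulo $8$.

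\textbf{Step 1: the sum for even $n$.} Write $n=2k$. Since $x^k$ and $y^k$ are odd, their squares are $\equiv1\Mod{8}$. Hence $x^n+y^n\equiv2\Mod{8}$, so $\nu_2(x^n+y^n)=1$.

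\textbf{Step 2: odd $n$.} We have $x^n-y^n=(x-y)\sum_{i=0}^{n-1}x^{n-1-i}y^i$. The sum has $n$ odd terms, so it is odd, which gives $\nu_2(x^n-y^n)=\nu_2(x-y)$. For the sum, replace $y$ by $-y$: $x^n+y^n=x^n-(-y)^n$, and the same argument gives $\nu_2(x^n+y^n)=\nu_2(x+y)$.

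\textbf{Step 3: even $n$, the difference.} Write $n=2^a b$ with $a\ge1$ and $b$ odd. By Step 2 applied to the odd integers $x^{2^a}$ and $y^{2^a}$,
\[\nu_2(x^n-y^n)=\nu_2\bigl(x^{2^a}-y^{2^a}\bigr).\]
Now factor
\[x^{2^a}-y^{2^a}=(x-y)(x+y)\prod_{i=1}^{a-1}\bigl(x^{2^i}+y^{2^i}\bigr).\]
Each factor in the product has valuation $1$ by Step 1. Therefore $\nu_2(x^n-y^n)=\nu_2(x-y)+\nu_2(x+y)+a-1$, and $a=\nu_2(n)$, which is the claimed formula.

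All steps are routine. The only point needing care is the modulo-$8$ fact that odd squares are $\equiv1$, which is what makes every middle factor contribute exactly one power of $2$.
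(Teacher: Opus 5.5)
Your proof is correct: the odd cofactor $\sum_{i=0}^{n-1}x^{n-1-i}y^i$ settles odd $n$, odd squares being $\equiv 1$ modulo $8$ (modulo $4$ already suffices) gives $\nu_2(x^{2k}+y^{2k})=1$, and the factorization $x^{2^a}-y^{2^a}=(x-y)\prod_{i=0}^{a-1}\bigl(x^{2^i}+y^{2^i}\bigr)$ then gives the even case; the degenerate cases $x=\pm y$ are consistent with $\nu_2(0)=\infty$. The paper does not prove the lemma itself but cites \cite{LTE}, whose argument is essentially this same factorization (what you announce as an induction on $\nu_2(n)$ is really this direct telescoping product, which is fine).
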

	
	\begin{proof}[Proof of Proposition~\ref{powertil}]
		Let
		\[f(z)=z^r,\quad g(z)=z^s,\quad\text{and}\quad c(z)=-z,\]
		where $r,s\geq3$ are odd integers. Our goal is to prove that
		\[\tilde{S}_{f,g,c}^2:=\left\lbrace (m,n)\in\Z_{\geq0}\colon \exists\la\in\C^\times,\,f^{\circ m}(\la)=g^{\circ n}(\la)=c(\la)\right\rbrace\]
		is not semi-linear.
		
		Suppose, for the sake of contradiction, that $\tilde{S}_{f,g,c}^2$ is semi-linear. By Theorem~\ref{thmGS}, the set
		\[S^\prime:=\left\lbrace (m,n)\in\Z_{>0}^2\colon (2m,2n)\in\tilde{S}_{f,g,c}^2\right\rbrace\]
		is also semi-linear. Let $(m,n)\in\Z_{>0}^2$. Applying Lemma~\ref{lem4.1} with $k=2$, we deduce that
		\begin{align*}
			&(m,n)\in S^\prime\text{, i.e., }\exists\la\in\C^\times,\,f^{\circ 2m}(\la)=g^{\circ 2n}(\la)=c(\la)\\
			\iff&\min\left\lbrace\nu_2(r^{2m}-1),\nu_2(s^{2n}-1)\right\rbrace<\nu_2(r^{2m}-s^{2n}).
		\end{align*}
		If $\nu_2(r^{2m}-1)\neq\nu_2(s^{2n}-1)$, then the strong triangle inequality gives
		\[\min\left\lbrace \nu_2(r^{2m}-1),\nu_2(s^{2n}-1)\right\rbrace=\nu_2(r^{2m}-s^{2n}).\]
		If $\nu_2(r^{2m}-1)=\nu_2(s^{2n}-1)=:\mu$, then by modulo $2^{\mu+1}$ we obtain that
		\[\nu_2(r^{2m}-s^{2n})=\nu_2((r^{2m}-1)-(s^{2n}-1))\geq\mu+1.\]
		By Lemma~\ref{2LTE}, for every $(m,n)\in\Z_{>0}^2$, we have
		\[\nu_2(r^{2m}-1)-\nu_2(s^{2n}-1)=\nu_2(m)-\nu_2(n)+a_{r,s},\]
		where the integer
		\[a_{r,s}:=\nu_2(r-1)+\nu_2(r+1)-\nu_2(s-1)-\nu_2(s+1)\in\Z\]
		depends only on $s$ and $r$. 
		Thus, we conclude that
		\[S^\prime=\left\lbrace(m,n)\in\Z_{>0}^2\colon\nu_2(m)+a_{r,s}=\nu_2(n)\right\rbrace.\]
		
		By Lemma~\ref{uniD}, we can take $D\in\Z_{>0}$ such that $\ep(S^\prime_m)\leq D$ for all $m\geq1$. Take an integer $N>\max\left\lbrace 0,-a_{r,s}+\log_2 D\right\rbrace $. Then
		\[N^\prime:=N+a_{r,s}>\log_2 D\] and we have
		\[S^\prime_{2^N}=\left\lbrace n\in\Z_{>0}\colon \nu_2(n)=N^\prime\right\rbrace =2^{N^\prime}+2^{N^\prime+1}\Z_{\geq0}.\]
		A direct computation shows that $\ep(S^\prime_{2^N})=2^{N^\prime+1}>2D>D$, which is a contradiction. Therefore, the modified rank-two recurrence set $\tilde{S}_{f,g,c}^2$ is not semi-linear. 
	\end{proof}
	
	\begin{Rem}
		Let $(f,g,c)$ be as in Proposition~\ref{powertil}. Modifying the proof of Proposition~\ref{powertil}, we can deduce that $\tilde{S}_{f,g,c}^2$ is definable in the structure
		\[{\mathrm{B\ddot{u}}}_2:=(\Z_{\geq0},+,0,1,V_2),\]
		where $V_2:\Z_{\geq0}\to\Z_{\geq0}$ is the function of one variable given by
		\[V_2(m)=2^{\nu_2(m)}\text{ for }m\in\Z_{>0}\quad\text{and}\quad V_2(0)=1.\]
		This structure $\mathrm{B\ddot{u}}_2$ is often called the \emph{B\"uchi arithmetic of base $2$}. See \cite{Buchi} for more information.
		
		It is natural to ask whether all modified rank-two recurrence sets for power maps are definable in the structure
		\[\left(\Z_{\geq0},+,0,1,(V_p)_{p\text{ prime}}\right),\]
		where $V_p$ is defined similarly as above for a prime $p\in\Z$.
	\end{Rem}
	
	\subsection{Rank-two recurrence for polynomials}\label{secrk2poly}
	We first deal with the case where exactly one of $f$ and $g$ is of degree $1$.
	\begin{Prop}\label{onedeg1}
		Let $f,g,c\in\C[z]$ be polynomials such that $\deg(f)\geq2$ and $\deg(g)=1$. Then $S_{f,g,c}^2$ is semi-linear.
	\end{Prop}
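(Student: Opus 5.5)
The plan is to reduce everything to the structure of the affine iterates $g^{\circ n}$, because $\deg(g)=1$ makes $g^{\circ n}(z)=a^nz+b_n$ completely explicit. First I will dispose of the degenerate $m$. Write $d=\deg(f)\geq2$. The argument of Lemma~\ref{lemSs} (case $m\in M_0$) only uses $\deg(f)\geq2$ and \eqref{Shla} for the other map, and \eqref{Shla} holds for any polynomial $h$. So for any finite $M_0\subseteq\Z_{\geq0}$ the slice $S^2_{f,g,c}\cap(M_0\times\Z_{\geq0})$ is semi-linear. I therefore discard the at most one $m$ with $f^{\circ m}=c$, and all $m$ with $d^m\leq\max\{1,\deg(c)\}$. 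For the remaining $m$ the polynomial $f^{\circ m}-c$ has degree $d^m\geq2$ and finitely many roots $\La_m$.

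\textbf{Case A: $g$ has finite order.} Suppose $g^{\circ k}=\Id$ for some $k\geq1$; this happens exactly when $a$ is a root of unity and $b_k=0$. Then $n\mapsto g^{\circ n}$ takes only the $k$ values $L_\rho:=g^{\circ\rho}$ for $0\leq\rho<k$, according to $\rho\equiv n\Mod{k}$. Hence
\[S_{f,g,c}^2=\bigcup_{\rho=0}^{k-1}\left\lbrace m\colon\exists\la,\ f^{\circ m}(\la)=c(\la)=L_\rho(\la)\right\rbrace\times(\rho+k\Z_{\geq0}),\]
and it suffices to show each first factor is semi-linear in $m$.
\begin{itemize}
\item If $L_\rho\neq c$, the equation $L_\rho(\la)=c(\la)$ has finitely many solutions $\la$, and the factor is the finite union of the sets $S_{f,c}(\la)$, each semi-linear by \eqref{Shla}.
\item If $L_\rho=c$, the factor is the set of $m$ for which $f^{\circ m}-c$ has a root. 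This is all $m$ except possibly one, so it is cofinite.
\end{itemize}
Products of semi-linear subsets of $\Z_{\geq0}$ are semi-linear by Theorem~\ref{thmGS}.

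\textbf{Case B: $g$ has infinite order.} After an affine conjugation, which does not change $S^2_{f,g,c}$ once $f$ and $c$ are conjugated too, I may assume $g(z)=az$ with $a$ not a root of unity, or $g(z)=z+1$. Then for each $\la$ that is not a fixed point of $g$, there is at most one $n$ with $g^{\circ n}(\la)=c(\la)$. The fixed point ($0$, or none) contributes a product set $S_{f,c}(0)\times\Z_{\geq0}$ or $S_{f,c}(0)\times\emptyset$, which is semi-linear by \eqref{Shla}. So the remaining part of $S^2_{f,g,c}$ is
\[\bigcup_{\la}S_{f,c}(\la)\times\{n(\la)\},\]
where $\la$ ranges over non-fixed roots of some $f^{\circ m}-c$ with $m$ large, and $n(\la)$ is the unique exponent (if any) with $g^{\circ n(\la)}(\la)=c(\la)$. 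The key claim is that only finitely many such $\la$ admit an $n(\la)$. Granting this, the set is a finite union of semi-linear sets and we are done.

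To prove the key claim I would argue as in Proposition~\ref{poly2musame}. From $f^{\circ m_j}(\la_j)=c(\la_j)$ with $m_j\to\infty$ we get $\hat h_f(\la_j)\to0$, hence $h(\la_j)$ and $h(c(\la_j))$ stay bounded. The relation $c(\la_j)=g^{\circ n_j}(\la_j)$ then forces the following.
\begin{itemize}
\item For $g(z)=z+1$: $n_j=c(\la_j)-\la_j$ has bounded height.
\item For $g(z)=az$: $a^{n_j}=c(\la_j)/\la_j$ has bounded height.
\end{itemize}
Over $\overline{\Q}$, bounded height of $n_j$ (respectively of $a^{n_j}$, with $a$ not a root of unity) allows only finitely many $n_j$. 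For each fixed $n$ the equation $g^{\circ n}(\la)=c(\la)$ has finitely many solutions unless $g^{\circ n}=c$, which happens for at most one $n$ because $g$ has infinite order; that single $n$ can be absorbed by the symmetric version of Lemma~\ref{lemSs} with $N_0$ finite. Since each of the finitely many remaining $n$ has only finitely many solutions $\la$, this gives finiteness of the relevant $\la$.

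I expect the main obstacle to be the transcendental and function-field situation. There integers have height zero and $|a|=1$ is possible, so height bounds alone do not bound $n_j$. I would first try to specialize the finitely generated field of definition to $\overline{\Q}$ while preserving the relations, in the style of Case 3 of Proposition~\ref{poly2musame}. Failing that, I would use the equidistribution and preperiodicity conclusions: the points $\la_j$ become $f$-preperiodic or lie in $J(f)$ in the limit, and combined with $c(\la_j)=a^{n_j}\la_j$ this should give a rigidity statement of the type of Proposition~\ref{infUCintro}, forcing $a$ to be a root of unity and contradicting the infinite-order assumption.
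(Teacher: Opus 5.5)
Your Case~A and the reduction in Case~B are sound. In Case~B you have isolated the right statement: the set of $\la$ (not fixed by $g$) with $f^{\circ m}(\la)=c(\la)=g^{\circ n}(\la)$, for some $m$ with $f^{\circ m}\neq c$ and some $n$ with $g^{\circ n}\neq c$, is finite. The paper imports exactly this finiteness from Hsia--Tucker \cite[Proposition~9]{HT17}, and then only computes the two exceptional pieces $S_s^f$ and $S_s^g$ by hand.

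\textbf{The gap.} You prove this finiteness only when $f$, $g$, $c$ are defined over $\overline{\Q}$. Over a transcendental base, the function-field height of $n_j$ (for $g(z)=z+1$) is zero, and so is that of $a^{n_j}$ when $a\in\overline{\Q}$. Your bound on $n_j$ therefore disappears, and for this case you give only tentative plans. The model you point to, Case~3 of Proposition~\ref{poly2musame}, does not transfer. There the Baker--DeMarco criterion forced two maps of degree $\geq2$ to be defined over $\overline{\Q}$ up to conjugacy. Here nothing prevents $f$, $c$, or $a$ from being genuinely transcendental, so there is nothing to descend to. The equidistribution/rigidity alternative is too vague to assess.

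\textbf{A repair along your lines.} Specialize rather than descend, using that the $n_j$ are integers and so survive specialization.
Let $A\subseteq\C$ be the $\Z$-algebra generated by the coefficients of the conjugated $f$ and $c$, by $a^{\pm1}$, by $\mathrm{lc}(f)^{-1}$, and by $c(0)^{-1}$ if $c(0)\neq0$.
Choose a ring homomorphism $\sigma\colon A\to\overline{\Q}$ with $\sigma(a)$ not a root of unity. This is automatic if $a\in\overline{\Q}$; if $a$ is transcendental, use that $a\colon\Spec(A\otimes\Q)\to\G_{m,\Q}$ is dominant. Put $f_0=\sigma(f)$.
Since $d^{m_j}>\deg(c)$, the leading coefficient of $f^{\circ m_j}-c$ is a unit in $A$. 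Hence $\la_j$ is integral over $A$, and $\sigma$ extends to $A[\la_j]$, sending $\la_j$ to some $\la_j'\in\overline{\Q}$ that satisfies the same relations with the same $(m_j,n_j)$.
Then $\hat h_{f_0}(\la_j')\leq C/(d^{m_j}-\deg(c))$ with $C$ independent of $j$, and your height argument bounds $n_j$:
if $c(0)=0$, write $c(z)=zc_1(z)$ and specialize $c_1(\la_j)=a^{n_j}$;
if $c(0)\neq0$, then $\la_j'\neq0$.
You do not need the $\la_j'$ to be distinct. Finitely many values of $n_j$, each with $g^{\circ n_j}\neq c$, already contradict the infinitude of the distinct $\la_j\in\C$.

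\textbf{A smaller point.} Lemma~\ref{lemSs} does not literally cover the slice $m=0$ when $c(z)=z$, because its case~(1) uses $\deg(g)\geq2$. Writing $g^{\circ n}(z)=a^nz+b_n$, that slice is $\{0\}\times\{n\colon a^n\neq1\text{ or }g^{\circ n}=\Id\}$. This needs the small computation the paper carries out in \eqref{Ssf}.
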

	\begin{proof}
		Define
		\[A_f=\left\lbrace m\in\Z_{\geq0}\colon f^{\circ m}=c\right\rbrace\quad\text{and}\quad A_g=\left\lbrace n\in\Z_{\geq0}\colon g^{\circ n}=c\right\rbrace.\]
		Then $\# A_f\leq1$ because $\deg(f)\geq2$. Define
		\[S_s^f=S_{f,g,c}^2\cap\left(A_f\times\Z_{\geq0}\right)\quad\text{and}\quad S_s^g=S_{f,g,c}^2\cap\left(\Z_{\geq0}\times A_g\right).\]
		For every $\la\in\C$, define
		\[S^2(\la):=\left\lbrace (m,n)\in\Z_{\geq0}^2\colon f^{\circ m}(\la)=g^{\circ n}(\la)=c(\la)\right\rbrace.\]
		Then $S^2(\la)=S_{f,c}(\la)\times S_{g,c}(\la)$ is semi-linear for all $\la\in\C$; see \eqref{Shla}.
		
		By \cite[Proposition~9]{HT17}, we can take a finite subset $\La\subseteq\C$ such that
		\begin{equation}\label{HTdecomp}
			S_{f,g,c}^2=S_s^f\cup S_s^g\cup\bigcup_{\la\in\La}S^2(\la).
		\end{equation}
		With the help of the decomposition \eqref{HTdecomp}, it suffices to prove that both $S_s^f$ and $S_s^g$ are semi-linear.
		
		\smallskip
		
		As $\deg(f)\geq2$, we have
		\[S_s^g=Z_f\times A_g,\]
		where $Z_f=\Z_{\geq0}$ or $Z_f=\Z_{\geq0}\setminus\{\log_{\deg(f)}\deg(c)\}$; see the proof of Lemma~\ref{lemSs}. In particular, $\#\left(\Z_{\geq0}\setminus Z_f\right)\leq1$ and $Z_f$ is semi-linear. If $\# A_g<\infty$, then $S_s^g=Z_f\times A_g$ is semi-linear. If $\# A_g=\infty$, then it is easy to see that
		\[A_g=\left\lbrace n_0+kn\colon n\in\Z_{\geq0}\right\rbrace\]
		is an arithmetic progression, where $n_0=\min A_g$ and $k=-n_0+\min\left(A_g\setminus\{n_0\}\right)$; hence, $S_s^g=Z_f\times A_g$ is semi-linear. We have shown that $S_s^g$ is always semi-linear. It remains to show that $S_s^f$ is semi-linear. 
		
		\smallskip
		
		If $A_f=\emptyset$, then $S_s^f=\emptyset$ is semi-linear. Suppose now that $A_f\neq\emptyset$. Then $A_f=\{m_0\}$ for some $m_0\geq0$. If $m_0>0$, then
		\[\deg(g^{\circ n})=1<\deg(f^{\circ m_0})=\deg(c)\]
		for all $n\geq0$; hence, we conclude that $S_s^f=\{m_0\}\times\Z_{\geq0}$ is semi-linear. Suppose now that $m_0=0$. Then $c(z)=f^{\circ 0}(z)=z$. Write
		\[g(z)=az+b\]
		with $a\in\C^\times$ and $b\in\C$. Then $g^{\circ 0}(z)=z=c(z)$, and for every $n\in\Z_{>0}$, we have
		\[g^{\circ n}(z)=a^n z+b\sum_{j=0}^{n-1}a^j.\]
		\begin{equation}\label{Ssf}
			S_s^f=\{(0,0)\}\cup\left\lbrace (0,n)\colon n\in\Z_{>0}\text{ such that }a^n\neq1\text{ or }b\sum_{j=0}^{n-1}a^j=0\right\rbrace.
		\end{equation}
		Using \eqref{Ssf}, we compute that $S_s^f=\{0\}\times\Z_{\geq0}$ if $a\neq1$ or $b=0$, and that $S_s^f=\{(0,0)\}$ if $a=1$ and $b\neq0$. Therefore, $S_s^f$ is always semi-linear. The proof is complete.
	\end{proof}
	
	Then we finish the proof of Theorem~\ref{thmrk2poly}.
	\begin{proof}[Proof of Theorem~\ref{thmrk2poly}]
		Let $f,g\in\C[z]\setminus\C$ be non-constant polynomials such that $d_1:=\deg(f)\geq2$ or $d_2:=\deg(g)\geq2$, and let $c\in\C[z]$. By symmetry and Proposition~\ref{onedeg1}, we may assume that $d_1\geq2$ and $d_2\geq2$. By Proposition~\ref{Rfinite}, we may assume that $\# R_{f,g,c}=\infty$. Then Proposition~\ref{poly2musame} implies that
		\[P:=\PrePer(f,\C)=\PrePer(g,\C),\quad\mu:=\mu_f=\mu_g,\quad\text{and}\quad J:=J(f)=J(g).\]
		Since $\mu_f=\mu_g$, by \cite{Zdunik90}, there are three possibilities:
		\begin{itemize}
			\item both $f$ and $g$ are non-exceptional;
			\item $f$ is conjugate to $z^{d_1}$ and $g$ is conjugate to $z^{d_2}$;
			\item $f$ is conjugate to $\pm T_{d_1}$ and $g$ is conjugate to $\pm T_{d_2}$.
		\end{itemize}
		
		\medskip
		
		\textbf{Case 1.} Suppose that both $f$ and $g$ are non-exceptional.
		
		By Theorem~\ref{polysamemu} and Remark~\ref{AutJ}, after affine conjugacy, we may assume that
		\begin{equation}\label{fgform2}
			f(z)=\zeta_1 h^{\circ k_1}(z)\quad\text{and}\quad g(z)=\zeta_2 h^{\circ k_2}(z),
		\end{equation}
		where $k_1,k_2\geq1$ are integers, and $h\in\C[z]$ is a monic centered non-exceptional polynomial of degree $\geq2$ with $\mu_h=\mu$ and $J(h)=J$ of the form
		\begin{equation}\label{hform2}
			h(z)=z^r R(z^s)\quad(r\geq1,\,s\geq1,\,\text{and }R\in\C[z]\setminus\C)
		\end{equation}
		as in Remark~\ref{AutJ}, and $\zeta_1,\zeta_2\in U_s(\C)$. Set $d:=\deg(h)\geq2$. Then $d_1=d^{k_1}$ and $d_2=d^{k_2}$. From \eqref{hform2} and \eqref{fgform2}, it is easy to see that
		\[\PrePer(h,\C)=P\quad\text{and}\quad P=U_s(\C)\cdot P=\left\lbrace\zeta z_0\colon\zeta\in U_s(\C)\text{ and }z_0\in P\right\rbrace.\]
		We now consider the following two subcases.
		
		\smallskip
		
		\textbf{Case 1.1.} Suppose that $c\in\C$.
		
		Then $c\in P$ by Proposition~\ref{poly3cases}.
		
		By Lemma~\ref{lemSs}, it suffices to prove that
		\[S^\prime:=S_{f,g,c}^2\cap\Z_{>0}^2\]
		is semi-linear. Consider the partition $\Z_{>0}^2=C_1\sqcup C_2$, where
		\[C_1:=\left\lbrace (m,n)\in\Z_{>0}^2\colon k_1m\geq k_2n\right\rbrace\quad\text{and}\quad C_2:=\left\lbrace (m,n)\in\Z_{>0}^2\colon k_1m<k_2n\right\rbrace.\]
		Define $S^\prime_j:=S^\prime\cap C_j$ for $1\leq j\leq 2$. Then $S^\prime=S^\prime_1\sqcup S^\prime_2$, and it remains to prove that both $S^\prime_1$ and $S^\prime_2$ are semi-linear. In the following, we will only prove that $S^\prime_1$ is semi-linear, because the semi-linearity of $S^\prime_2$ can be proved similarly.
		
		Set
		\[\xi_0:=\exp\left(\frac{2\pi i}{s}\right)\in U_s(\C),\]
		which is a primitive $s$-th root of unity. Let $a,b\in\{0,1\dots,s-1\}$ be such that
		\[\zeta_1=\xi_0^a\quad\text{and}\quad\zeta_2=\xi_0^b.\]
		For $\xi\in U_s(\C)$ and $n\in\Z_{\geq0}$, define
		\[\xi(n):=\xi^{(d^n-1)/(d-1)}.\]
		For $m\in\Z_{>0}$, since $r\equiv d\Mod{s}$, we compute that
		\[f^{\circ m}(z)=\xi_0^a(k_1m)h^{\circ k_1m}(z)\quad\text{and}\quad g^{\circ m}(z)=\xi_0^b(k_2m)h^{\circ k_2m}(z).\]
		
		Let $(m,n)\in C_1$ be arbitrary. Observe that the equation
		\[\xi_0^b(k_2n)h^{\circ k_2n}(\la)=c\quad(\la\in\C)\]
		always has a solution in $\C$ because $\deg\left(h^{\circ k_2n}\right)>0$. We deduce that
		\begin{align*}
			&(m,n)\in S^\prime_1\text{, i.e., }\exists\la\in\C,\,\xi_0^a(k_1m)h^{\circ k_1m}(\la)=\xi_0^b(k_2n)h^{\circ k_2n}(\la)=c\\
			\iff&\xi_0^a(k_1m)h^{\circ (k_1m-k_2n)}\left(\xi_0^{-b}(k_2n)c\right)=c\\
			\iff&h^{\circ (k_1m-k_2n)}(c)=\zeta(m,n)c,
		\end{align*}
		where
		\[\zeta(m,n):=\left(\xi_0^{-a}(k_1 m)\right)\cdot\left(\left(\xi_0^b(k_2 n)\right)(k_1m-k_2n)\right)=\xi_0^{\varphi(m,n)}\in U_s(\C),\]
		and $\varphi:C_1\to\Z$ is the function given by
		\[\varphi(m,n)=-a\frac{d^{k_1m}-1}{d-1}+b\frac{d^{k_2n}-1}{d-1}\frac{d^{k_1m-k_2n}-1}{d-1}.\]
		
		Suppose first that $c=0$. Then we have
		\[S^\prime_1=\left\lbrace (m,n)\in C_1\colon h^{\circ k_1m-k_2n}(0)=0\right\rbrace.\]
		Recall that
		\[S_{h,0}(0):=\left\lbrace m\in\Z_{\geq0}\colon h^{\circ m}(0)=0\right\rbrace\]
		is semi-linear; see \eqref{Shla}. Observe that $C_1$ is definable in $\Pr$, and the function $(m,n)\mapsto k_1m-k_2n$ on $C_1$ is also definable in $\Pr$. By Theorem~\ref{thmGS}, we conclude that
		\[S^\prime_1=\left\lbrace (m,n)\in C_1\colon k_1m-k_2n\in S_{h,0}(0)\right\rbrace\]
		is definable in $\Pr$ and hence semi-linear.
		
		\smallskip
		
		Suppose now that $c\neq0$. Define
		\[\Omega:=\left\lbrace\xi c\colon\xi\in U_s(\C)\right\rbrace\cap O_h(c).\]
		Then $\Omega$ is a finite set and it may be empty. We deduce that
		\begin{equation}\label{11}
			S^\prime_1=\left\lbrace (m,n)\in C_1\colon\exists c_0\in\Omega,h^{\circ (k_1m-k_2n)}(c)=c_0\text{ and }c_0=\zeta(m,n)c\right\rbrace.
		\end{equation}
		
		Fix an arbitrary $c_0\in\Omega$. The definition of $\Omega$ implies that
		\begin{equation}\label{Shc0}
			S_{h,c_0}(c)=\left\lbrace m\in\Z_{\geq0}\colon h^{\circ m}(c)=c_0\right\rbrace\text{ is an arithmetic progression;}
		\end{equation}
		see also \eqref{Shla}. Let $l(c_0)\in\{0,1,\dots,s-1\}$ be the unique integer such that
		\[c_0=\xi_0^{l(c_0)}c.\]
		Then for every $(m,n)\in C_1$, we have
		\[c_0=\zeta(m,n)c\iff l(c_0)\equiv \varphi(m,n)\Mod{s}.\]
		
		Note that the eventual periodicity of the sequence
		\[\left(\frac{d^n-1}{d-1}\Mod{s}\right)_{n=0}^\infty\]
		implies that for every integer $q\in\Z$, the set
		\begin{equation}\label{epcong2}
			\left\lbrace n\in\Z_{\geq0}\colon\frac{d^n-1}{d-1}\equiv q\Mod{s}\right\rbrace \text{ is semi-linear.}
		\end{equation}
		Consider the congruence equation
		\begin{equation}\label{cong2}
			-ax+byw\equiv l(c_0)\Mod{s}
		\end{equation}
		in variables $(x,y,w)$. Let
		\[T(c_0)\subseteq\{0,1,\dots,s-1\}^3\]
		be the set of all solutions of \eqref{cong2} over the least residue system $\{0,1,\dots,s-1\}$ of $s$. Let $t_0=(x_0,y_0,w_0)\in T(c_0)$ be arbitrary. Define $S^\prime_{1,c_0,t_0}$ to be the set of all pairs $(m,n)\in C_1$ satisfying
		\[\frac{d^{k_1m}-1}{d-1}\equiv x_0\Mod{s},\quad\frac{d^{k_2n}-1}{d-1}\equiv y_0\Mod{s},\]
		\[\frac{d^{k_1m-k_2n}-1}{d-1}\equiv w_0\Mod{s},\quad\text{and}\quad k_1m-k_2n\in S_{h,c_0}(c).\]
		Since
		\[(m,n)\mapsto k_1m,\quad(m,n)\mapsto k_2n,\quad\text{and}\quad(m,n)\mapsto k_1m-k_2n\]
		are all definable functions on $C_1$, by \eqref{Shc0} and \eqref{epcong2} we deduce that $S^\prime_{1,c_0,t_0}$ is definable in $\Pr$.
		
		Therefore,
		\[S^\prime_1=\bigsqcup_{c_0\in\Omega}\bigsqcup_{t_0\in T(c_0)}S^\prime_{1,c_0,t_0}\]
		is definable in $\Pr$. By Theorem~\ref{thmGS}, we conclude that $S^\prime_1$ is semi-linear. The proof of case 1.1 is complete.
		
		\smallskip
		
		\textbf{Case 1.2.} Suppose that $d_3=\deg(c)\geq1$.
		
		By Proposition~\ref{poly3cases}, Theorem~\ref{polysamemu} and Remark~\ref{AutJ} imply that $c$ is of the form
		\[c(z)=\zeta_3 h^{\circ k_3}(z)\]
		for some $\zeta_3\in U_s(\C)$ and $k_3\in\Z_{\geq0}$.
		
		If $r>0$, then
		\[f(0)=g(0)=c(0)=h(0)=0;\]
		see \eqref{hform2}. Taking $\la=0$ implies that $S_{f,g,c}^2=\Z_{\geq0}^2$ which is semi-linear, in the case $r>0$.
		
		Suppose now that $r=0$. Then $h(z)=R(z^s)$ and
		\[h(z)=h(\xi z)\quad\text{for all }\xi\in U_s(\C).\]
		For all $m,n\in\Z_{>0}$, we compute that
		\[f^{\circ m}(z)=\zeta_1 h^{\circ k_1m}(z)\quad\text{and}\quad g^{\circ n}(z)=\zeta_2 h^{\circ k_2n}(z).\]
		
		Let $m_0\geq1$ (resp. $n_0\geq1$) be the minimal integer such that $k_1 m_0\geq k_3+1$ (resp. $k_2 n_0\geq k_3+1$). By Lemma~\ref{lemSs}, it suffices to prove that
		\[S^\prime:=S_{f,g,c}^2\cap\left(\Z_{\geq m_0}\times\Z_{\geq n_0}\right)\]
		is semi-linear. Note that for every $(m,n)\in\Z_{\geq m_0}\times\Z_{\geq n_0}$, we have
		\[\left\lbrace k_1m,k_2n \right\rbrace\geq k_3+1.\]
		We consider two sub-subcases as follows.
		
		\smallskip
		
		\textbf{Case 1.2.1.} Suppose that $\zeta_1=\zeta_2$.
		
		Denote $\zeta:=\zeta_1=\zeta_2$. As $\deg(h)\geq2$, we can take $y_0\in\C$ such that $h(y_0)=y_0$. Take a preimage
		\[\la_0\in \left(h^{\circ k_3}\right)^{-1}\left(\zeta\zeta_3^{-1}y_0\right).\]
		For every $m\in\Z_{\geq m_0}$, since $r=0$ and $k_1m-k_3\geq1$, we obtain
		\begin{align*}
			f^{\circ m}(\la_0)&=\zeta h^{\circ k_1m}(\la_0)=\zeta h^{\circ (k_1m-k_3)}\circ h^{\circ k_3}(\la_0)\\
			&=\zeta h^{\circ (k_1m-k_3)}\left(\zeta\zeta_3^{-1}y_0\right)=\zeta h^{\circ (k_1m-k_3)}(y_0)=\zeta y_0\\
			&=\zeta\left(\zeta^{-1}\zeta_3 h^{\circ k_3}(\la_0)\right)=\zeta_3 h^{\circ k_3}(\la_0)\\
			&=c(\la_0).
		\end{align*}
		Similarly, we have $g^{\circ n}(\la_0)=c(\la_0)$ for every $n\in\Z_{\geq n_0}$. Taking $\la=\la_0$ shows that
		\[S^\prime=\Z_{\geq m_0}\times\Z_{\geq n_0},\]
		which is semi-linear.
		
		\smallskip
		
		\textbf{Case 1.2.2.} Suppose that $\zeta_1\neq\zeta_2$.
		
		Define
		\[\La:=\left\lbrace \la\in\C\colon\exists(m,n)\in\Z_{\geq m_0}\times\Z_{\geq n_0},\,f^{\circ m}(\la)=g^{\circ n}(\la)=c(\la)\right\rbrace.\]
		
		\smallskip
		
		\textbf{Claim:} $\La\subseteq \left(h^{\circ k_3}\right)^{-1}(0)$.
		
		\textit{Proof of the claim.} Let $\la\in\La$ and $(m,n)\in\Z_{\geq m_0}\times\Z_{\geq n_0}$ be such that $f^{\circ m}(\la)=g^{\circ n}(\la)=c(\la)$, i.e.,
		\[\zeta_1 h^{\circ k_1m}(\la)=\zeta_2 h^{\circ k_2n}(\la)=\zeta_3 h^{\circ k_3}(\la).\]
		Set $y=h^{\circ k_3}(\la)$. By symmetry, we may assume that $k_1m\geq k_2 n$. Recall that $k_2n\geq k_3+1$ because $n\geq n_0$. Note that
		\begin{equation}\label{122n}
			h^{\circ (k_2n-k_3)}(y)=h^{\circ (k_2n-k_3)}\circ h^{\circ k_3}(\la)=h^{\circ k_2 n}(\la)=\zeta_2^{-1}\zeta_3 h^{\circ k_3}(\la)=\zeta_2^{-1}\zeta_3y.
		\end{equation}
		Similarly, we have
		\begin{equation}\label{122m}
			h^{\circ (k_1m-k_3)}(y)=\zeta_1^{-1}\zeta_3y.
		\end{equation}
		Since $r=0$, by \eqref{122n} and \eqref{122m} we obtain
		\begin{align*}
			h^{\circ (k_1m-k_2n)}(y)&=h^{\circ (k_1m-k_2n)}(\zeta_2^{-1}\zeta_3y)=h^{\circ (k_1m-k_2n)}\circ h^{\circ (k_2n-k_3)}(y)\\
			&=h^{\circ (k_1m-k_3)}(y)=\zeta_1^{-1}\zeta_3y.
		\end{align*}
		With the help of the above equation, by \eqref{122n}, \eqref{122m}, and the assumption that $r=0$, we obtain
		\begin{align*}
			\zeta_2^{-1}\zeta_3y&=h^{\circ (k_2n-k_3)}(y)=h^{\circ (k_2n-k_3)}(\zeta_1^{-1}\zeta_3 y)=h^{\circ (k_2n-k_3)}\circ h^{\circ (k_1m-k_2n)}(y)\\
			&=h^{\circ (k_1m-k_3)}(y)=\zeta_1^{-1}\zeta_3y.
		\end{align*}
		We conclude that $y=0$ because $\zeta_2^{-1}\zeta_3\neq\zeta_1^{-1}\zeta_3$. The claim is proved.
		
		\smallskip
		
		By the claim, for every $(m,n)\in\Z_{\geq m_0}\times\Z_{\geq n_0}$, since $\min\left\lbrace k_1m,k_2n\right\rbrace>k_3 $, we obtain
		\begin{align*}
			&(m,n)\in S^\prime\text{, i.e., }\exists\la\in\C,\,f^{\circ m}(\la)=g^{\circ n}(\la)=c(\la)\\
			\iff&\exists\la\in\left(h^{\circ k_3}\right)^{-1}(0),\,\zeta_1 h^{\circ k_1m}(\la)=\zeta_2 h^{\circ k_2n}(\la)=\zeta_3 h^{\circ k_3}(\la)\\
			\iff&h^{\circ (k_1m-k_3)}(0)=h^{\circ (k_2n-k_3)}(0)=0.
		\end{align*}
		Recall that
		\[S_{h,0}(0):=\left\lbrace m\in\Z_{\geq0}\colon h^{\circ m}(0)=0\right\rbrace\]
		is semi-linear; see \eqref{Shla}. By Theorem~\ref{thmGS}, we conclude that
		\[S^\prime=\{(m,n)\in\Z_{\geq m_0}\times\Z_{\geq n_0}\colon k_1m\in S_{h,0}(0)+k_3\text{ and }k_2n\in S_{h,0}(0)+k_3\}\]
		is definable in $\Pr$ and hence semi-linear.
		
		\medskip
		
		\textbf{Case 2.} Suppose that $f$ is conjugate to $z^{d_1}$ and $g$ is conjugate to $z^{d_2}$.
		
		As in the proof of Proposition~\ref{poly3cases}, after affine conjugacy, we may assume that
		\[f(z)=z^{d_1},\quad g(z)=\zeta z^{d_2},\quad\text{and}\quad c(z)=\eta z^{d_3},\]
		where $\zeta,\eta\in U(\C)$, and $d_3=\deg(c)\geq0$. (Note that $c$ cannot be $0$ because $\# R_{f,g,c}=\infty$.) Then the conclusion follows from Corollary~\ref{polypower}.
		
		\medskip
		
		\textbf{Case 3.} Suppose that $f$ is conjugate to $\pm T_{d_1}$ and $g$ is conjugate to $\pm T_{d_2}$.
		
		As in the proof of Proposition~\ref{poly3cases}, after affine conjugacy, we may assume that
		\[f(z)=\varepsilon_1 T_{d_1}(z)\quad\text{and}\quad g(z)=\varepsilon_2 T_{d_2}(z)\]
		for some $\varepsilon_1,\varepsilon_2\in\{\pm1\}$. Note that $P=\{\xi+\xi^{-1}\colon\xi\in U(\C)\}$.
		
		If $\deg(c)\geq1$, then as in the proof of Proposition~\ref{poly3cases}, we have
		\[c(z)=\varepsilon_3 T_t(z)\]
		for some $\varepsilon_3\in\{\pm1\}$, where $t:=\deg(c)\geq1$ and we set $T_1(z)=z$. Thus, $S_{f,g,c}^2$ is semi-linear by Proposition~\ref{Cheby}.
		
		Suppose now that $c\in\C$. Let $\pi:\P^1_\C\to\P^1_\C$ be the endomorphism given by $\pi(z)=z+z^{-1}$. Then $c\in P$ by Proposition~\ref{poly3cases}. Hence
		\[c=\zeta+\zeta^{-1}=\pi(\zeta)\]
		for some $\zeta\in U(\C)$. By Lemma~\ref{lemSs}, it suffices to prove that
		\[S^\prime=S_{f,g,c}^2\cap\Z_{>0}^2\]
		is semi-linear. Define
		\[F(z)=\varepsilon_1 z^{d_1},\quad G_1(z)=\varepsilon_2 z^{d_2},\quad\text{and}\quad G_{-1}(z)=\varepsilon_2 z^{-d_2}.\]
		For $(m,n)\in\Z_{>0}^2$, we obtain
		\begin{align*}
			&(m,n)\in S_{f,g,c}^2\text{, i.e., }\exists\delta\in\C,\,f^{\circ m}(\delta)=g^{\circ n}(\delta)=c\\
			\iff&\exists\la\in\C^\times,\,\pi\circ F^{\circ m}(\la)=\pi\circ G^{\circ n}(\la)=\pi(\zeta)\\
			\iff&\exists\la\in\C^\times,\,\exists\eta,\kappa\in\{\pm1\},\,F^{\circ m}(\la)=\left(G^{\circ n}(\la)\right)^\eta=\zeta^\kappa\\
			\iff&\exists\eta,\kappa\in\{\pm1\},\,\exists\la\in\P^1(\C),\,F^{\circ m}(\la)=G_{\eta}^{\circ n}(\la)=\zeta^\kappa\\
			\iff&\exists\eta,\kappa\in\{\pm1\},\,(m,n)\in\hat{S}_{F,G_{\eta},\zeta^\kappa}^2.
		\end{align*}
		Therefore, we conclude that
		\[S^\prime=\left(\bigcup_{\eta,\kappa\in\{\pm1\}}\hat{S}_{F,G_{\eta},\zeta^\kappa}^2\right)\cap\Z_{>0}^2\]
		is semi-linear by Proposition~\ref{powerconst} and Theorem~\ref{thmGS}. The proof is complete.
	\end{proof}
	
	\subsection{The case $(\deg(f),\deg(g),\deg(c))=(1,1,1)$}\label{sec111}
	In this subsection, we finish the proof of Theorem~\ref{deg111}. We need (a special case of) the following proposition, which may be of independent interest:
	\begin{Prop}\label{infUCg}
		Let $R\in\C(z)$ be a rational map of degree $d\geq1$, and let
		\[\eta\in\partial\D=\left\lbrace z\in\C\colon\left|z\right|=1\right\rbrace.\]
		Suppose that there exists a complex number $\gamma\in\C$ such that
		\[\#\left\lbrace n\in\Z\colon R(\eta^n)\in\gamma\cdot\Z\right\rbrace=\infty.\]
		Then $\eta\in U(\C)$.
	\end{Prop}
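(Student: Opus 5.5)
The plan is to treat $\gamma=0$ directly and, for $\gamma\neq0$, to reduce to a Diophantine approximation problem on $\partial\D$. Suppose $\eta\notin U(\C)$, so the points $x_n:=\eta^n$ ($n\in\Z$) are pairwise distinct. Let
\[
N:=\{n\in\Z\colon R(\eta^n)\in\gamma\cdot\Z\},
\]
which is infinite by hypothesis.

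If $\gamma=0$, every $x_n$ with $n\in N$ is a zero of the non-constant map $R$. These are infinitely many distinct zeros, which is a contradiction. So assume $\gamma\neq0$ and replace $R$ by $\gamma^{-1}R$. Then $R(x_n)=k_n\in\Z$ for all $n\in N$. If the $k_n$ take only finitely many values, then the $x_n$ lie in finitely many fibres $R^{-1}(k)$, each finite, which is again a contradiction. Hence, after passing to an infinite subset, $|k_n|\to\infty$. By compactness, $x_n$ then accumulates at a pole $\alpha\in\partial\D$ of $R$. Near $\alpha$ we have $R(z)\sim A(z-\alpha)^{-e}$, so
\[
|\eta^n-\alpha|\asymp |k_n|^{-1/e}.
\]

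Next I would use complex conjugation. Since $|x_n|=1$ we have $\overline{x_n}=x_n^{-1}$, and $k_n$ is real. Writing $\bar R$ for $R$ with conjugated coefficients, this gives $R(x_n)=\bar R(x_n^{-1})$. Thus each $x_n$ is a root of $R(z)-\bar R(1/z)$. Unless this rational function vanishes identically, it has finitely many roots, which contradicts the distinctness of the $x_n$. So we may assume $R(z)=\bar R(1/z)$, i.e. $R$ is real-valued on $\partial\D$. This case genuinely requires arithmetic input.

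For that case I would argue with heights. For $n\in N$, $x_n$ is a root of the non-constant rational function $R-k_n$, so it is algebraic over the field $F$ generated by the coefficients of $R$. Taking two distinct $n_1,n_2\in N$ shows that $\eta^{n_2-n_1}$, and hence $\eta$, is algebraic over $F$. Let $L=F(\eta)$, a finitely generated field.

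In the model case $L\subseteq\overline{\Q}$:
\begin{enumerate}
\item By Kronecker's theorem, $h(\eta)>0$, so $h(x_n)=|n|h(\eta)$.
\item Since $h(R(x))=d\,h(x)+O(1)$, we get $\log|k_n|=h(k_n)\ge d|n|h(\eta)-O(1)$, so $|k_n|$ grows exponentially in $|n|$.
\item Combined with the local estimate, $|\eta^n-\alpha|\le C e^{-c|n|}$ for infinitely many $n$.
\item The pole $\alpha$ is algebraic. A Baker-type lower bound for linear forms in logarithms (e.g. Gel'fond--Baker for $|n\log\eta-\log\alpha-2\pi i j|$) gives $|\eta^n-\alpha|\ge |n|^{-C'}$, which is a contradiction.
\end{enumerate}

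The main obstacle is the reduction to $\overline{\Q}$ when $F$ is transcendental. I would first try a specialization argument: choose a specialization of the finitely generated $\overline{\Q}$-algebra carrying $R,\eta,\gamma$ that keeps $R$ real on the circle, keeps $|\eta|=1$ with $\eta$ not a root of unity, and preserves infinitely many integral values. This is delicate because specialization need not respect the archimedean absolute value. As a fallback, in the case where $\eta$ is transcendental over the field generated by the $k_n$, or where $\eta$ is "constant" in the function-field height sense, I would use the relation $R(\eta^n)=k_n$ itself. Two distinct values of $n$ give polynomial relations forcing $\eta$ to be algebraic over $\Q(\text{coefficients})$ with a fixed minimal polynomial independent of $n$, and applying a suitable embedding of $L$ into $\C$ under which the pole $\alpha$ and $\eta$ become algebraic then reduces to the model case.
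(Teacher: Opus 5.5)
\textbf{Gap: the case $L\not\subseteq\overline{\Q}$ is not handled.} This is the same as $\eta\notin\overline{\Q}$: once $\eta$ is algebraic, $R\in\overline{\Q}(z)$, because its graph contains infinitely many $\overline{\Q}$-points $(\eta^n,k_n)$. You flag this case as the main obstacle, but neither suggested route closes it.

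Your specialization preserves the identities $R(\eta^n)=k_n$, but it loses the archimedean hypothesis $|\eta|=1$. That hypothesis is indispensable: take $R(z)=z$ and $\eta=2$.

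Your fallback is not an argument. You already know that $\eta$ is algebraic over $F$. Moreover, no field embedding of $L$ into $\C$ can make a transcendental $\eta$ algebraic, since an embedding fixes $\Q$ and is injective.

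What is missing is a direct proof that $\eta\in\overline{\Q}$. The paper gets it from the fact that the integers $k_n$ are fixed by every field automorphism $\tau$ of $\C$:
\begin{enumerate}
\item Choose $\tau$ with $\eta,\tau(\eta)$ algebraically independent.
\item The points $(\eta^n,\tau(\eta)^n)$, $n\in N$, lie on the curve $R(x)=\tau(R)(y)$ in $\G_m^2$, and they generate a rank-one group.
\item Laurent's theorem makes some component a coset $x^ay^b=c_1$. Comparing degrees forces $y=cx^{\pm1}$.
\item Hence $\tau(\eta)^{n_2-n_1}=\eta^{\pm(n_2-n_1)}$, a contradiction.
\end{enumerate}

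Within your own framework, function-field heights would also work. Take $K=\overline{\Q}(\text{coefficients of }R,\eta)$ with constant field $\overline{\Q}$. The $k_n$ are constants, so $h(k_n)=0$. On the other hand, $h(R(\eta^n))=d|n|h(\eta)+O(1)$, and $h(\eta)>0$ whenever $\eta\notin\overline{\Q}$. This is a contradiction for large $|n|$.

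\textbf{Comparison once algebraicity is known.} Your model case is correct and noticeably shorter than the paper's. The single estimate $\log|k_n|=h(R(\eta^n))=d|n|h(\eta)+O(1)$, with $h(\eta)>0$ by Kronecker, gives exponential growth of $|k_n|$ directly. The paper instead:
\begin{enumerate}
\item shows, again via Laurent, that every Galois conjugate of $\eta$ lies on $\partial\D$;
\item deduces from Kronecker that $\eta$ is not an algebraic integer;
\item picks a place $v$ with $0<|\eta|_v<1$ and expands $R$ $v$-adically at $0$;
\item passes to norms $N_{K/\Q}(m_k-\beta)$ and applies the product formula.
\end{enumerate}
The concluding Baker step is identical in both. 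Your reduction to $R(z)=\bar R(1/z)$ is valid but plays no role in the argument.
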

	\begin{proof}
		Suppose, for the sake of contradiction, that $\eta\in\partial\D\setminus U(\C)$ but
		\[\#\left\lbrace n\in\Z\colon R(\eta^n)\in\gamma\cdot\Z\right\rbrace=\infty,\]
		for some $\gamma\in\C$. Then the elements in the sequence $(\eta^n)_{n\in\Z}$ are pairwise distinct. 
		
		If $\gamma=0$, then $R(\eta^n)=0$ for infinitely many $n$. Since a non-zero rational map has only finitely many roots, the sequence $(\eta^n)_{n\in\Z}$ can take at most finitely many values, which forces $\eta\in U(\C)$, a contradiction. Thus, we may assume that $\gamma\neq0$. After replacing $R$ with the rational map $R/\gamma$, we may assume that $\gamma=1$. Then we have $R(\eta^n)\in\Z$ for infinitely many $n\in\Z$.
		
		After replacing $\eta$ with $\eta^{-1}$ if necessary, we may take a sequence $(n_k)_{k=1}^\infty\subseteq\Z_{>0}$ of strictly increasing positive integers such that for every $k\geq1$, we have
		\begin{equation}\label{mkg}
			m_k:=R(\eta^{n_k})\in\Z.
		\end{equation}
		If the set $\{m_k\colon k\geq1\}$ is finite, then the quasi-finiteness of $R$ implies $\eta^{n_i}=\eta^{n_j}$ for some $i\neq j$, which forces $\eta\in U(\C)$, a contradiction. After passing to a subsequence, we may assume that the integers $m_k$ ($k\geq1$) are pairwise distinct. In particular,
		\begin{equation}\label{mkinfg}
			\lim_{k\to\infty}\left|m_k\right|=\infty.
		\end{equation}
		
		Since $\partial\D$ is compact, after passing to a subsequence, we may assume that 
		\[\lim_{k\to\infty}\eta^{n_k}=\alpha\in\partial\D.\]
		Then $\alpha\in\partial\D$ is a pole of the rational map $R$.
		
		Fix an algebraic closure $\overline{\Q}\subseteq\C$ of $\Q$ in $\C$. The standard absolute value on $\C$ is denoted by $|\cdot|$.
		
		\smallskip
		
		\textbf{Claim:} (1) $\eta\in\overline{\Q}$ and $R\in\overline{\Q}(z)$;\quad(2) $\sigma(\eta)\in\partial\D$ for every $\sigma\in\Gal(\overline{\Q}/\Q)$.
		
		\textit{Proof of the claim.} (1) Suppose that $\eta$ is transcendental over $\Q$. We can take an automorphism $\tau:\C\to\C$ of the field $\C$ such that $\eta$ and $\tau(\eta)$ are algebraically independent over $\Q$. For any $h\in\C(z)$, let $\tau(h)$ be the rational map obtained from $h$ by applying $\tau$ to all coefficients of $h$. For $k\geq1$, applying $\tau$ to \eqref{mkg} gives
		\[m_k=\tau(R)(\tau(\eta)^{n_k}).\]
		Denote the coordinates on the linear torus $\G_{m,\C}^2$ by $(x,y)$. Define $Z$ to be the curve in $\G_{m,\C}^2$ given by the equation
		\begin{equation}\label{eqZ1g}
			R(x)=\tau(R)(y).
		\end{equation}
		Let $\Gamma$ be the subgroup of $\G_m^2(\C)$ generated by the element $(\eta,\tau(\eta))$, which is of finite rank. Observe that $(\eta^{n_k},\tau(\eta)^{n_k})\in Z(\C)\cap\Gamma$ for all $k\geq1$. After passing to a subsequence, we may assume that there exists an irreducible component $Z_0$ of $Z$ such that \[(\eta^{n_k},\tau(\eta)^{n_k})\in Z_0(\C)\cap\Gamma\]
		for all $k\geq1$. Then
		\[\#\left(Z_0(\C)\cap\Gamma\right)=\infty.\]
		By the Mordell--Lang conjecture on $\G_{m,\C}^N$ proved by Laurent \cite{Laurent}, we conclude that $Z_0$ is a translate of an one-dimensional irreducible algebraic subgroup of $\G_{m,\C}^2$. Thus, $Z_0$ can be defined by an equation of the form
		\begin{equation}\label{eqZ2g}
			x^ay^b=c_1
		\end{equation}
		for some $c_1\in\C^\times$ and some $(a,b)\in\Z^2\setminus\left\lbrace(0,0)\right\rbrace$ with $\gcd(a,b)=1$. Substituting \eqref{eqZ2g} into \eqref{eqZ1g} (over a suitable finite cover of $\G_{m,\C}^2$), the fact that
		\[\deg(R)=\deg(\tau(R))=d\]
		implies that $|a/b|=1$; so $|a|=|b|=1$. Then \eqref{eqZ2g} can be rewritten as
		\begin{equation}\label{eqZ3g}
			y=cx^{\varepsilon}
		\end{equation}
		for some $c\in\C^\times$ and some $\varepsilon\in\{\pm1\}$. For every $k\geq1$, by \eqref{eqZ3g} we obtain
		\[\tau(\eta)^{n_k}=c\eta^{\varepsilon n_k}.\]
		Thus, we deduce that
		\begin{equation}\label{n2-n1g}
			\tau(\eta)^{n_2-n_1}=\eta^{\varepsilon(n_2-n_1)}.
		\end{equation}
		Recall that $n_2-n_1>0$, so \eqref{n2-n1g} contradicts the assumption that $\eta$ and $\tau(\eta)$ are algebraically independent over $\Q$. Therefore, we conclude that $\eta\in\overline{\Q}$.
		
		Since the graph of $R$ contains infinitely many $\overline{\Q}$-points $(\eta^{n_k},m_k)$ ($k\geq1$), we see that $R\in\overline{\Q}(z)$.
		
		(2) Let $\sigma\in\Gal(\overline{\Q}/\Q)$ be arbitrary. The argument in (1) applied to the curve $R(x)=\sigma(R)(y)$ shows that
		\[\sigma(\eta)^{n_2-n_1}=\eta^{\varepsilon_\sigma(n_2-n_1)}\]
		for some $\varepsilon_\sigma\in\{\pm1\}$. Taking absolute values gives $|\sigma(\eta)|=1$.
		
		The proof of the claim is complete.
		
		\smallskip
		
		We have $\alpha\in\overline{\Q}$ because $\alpha$ is a pole of $R\in\overline{\Q}(z)$. Fix a number field $K$ such that $\eta,\alpha\in K$ and $R\in K(z)$.
		
		By the claim and Kronecker's theorem, the algebraic number $\eta$ is not an algebraic integer. Then the claim and the product formula imply that we can take a non-archimedean place $v\in\sM_K$ of $K$ such that
		\begin{equation}\label{v<1g}
			0<\left|\eta\right|_v<1.
		\end{equation}
		Let $p\in\Z$ be the rational prime over which $v$ lies. Here the $v$-adic absolute value $|\cdot|_v$ is normalized by $|p|_v=p^{-1}$. Since $0<|\eta|_v<1$, we have $\eta^{n_k}\to0$ in the $v$-adic topology.
		
		If $0$ is a pole of $R$, then we obtain \[\left|m_k\right|_v=\left|R(\eta^{n_k})\right|_v\to\infty\quad\text{as}\quad k\to\infty,\]
		which contradicts the fact that $m_k\in\Z$ and hence $|m_k|_v\leq1$. Therefore, $0$ is not a pole of $R$. Set $\beta=R(0)$. Then $\beta\in K$ because $0$ is not a pole of $R$. Write
		\[R(z)-\beta=z^s\frac{P(z)}{Q(z)},\]
		where $s\geq1$ is an integer, and $P,Q\in K[z]$ are polynomials such that $P(0)\neq0$ and $Q(0)\neq0$. We deduce that there exist an integer $k_1>0$ and a number $C_1\in(0,1)$ such that for every integer $k\geq k_1$, we have
		\begin{equation}\label{vexpgrowg}
			0<\left|m_k-\beta\right|_v=A\cdot\left|\eta\right|_v^{sn_k}\leq C_1^{n_k},
		\end{equation}
		where $A=|P(0)|_v\cdot|Q(0)|_v^{-1}\in\R_{>0}$.
		
		Define
		\[g(z)=N_{K/\Q}(z-\beta)\in\Q[z],\]
		where $z$ is a variable and $N_{K/\Q}$ is the norm function for the field extension $K/\Q$. Then $g$ is a polynomial of degree $l:=[K:\Q]\geq2$, with coefficients in $\Q$. Let $D\in\Z_{>0}$ be the minimal positive integer such that
		\[D\cdot g(z)\in\Z[z].\]
		For $k\geq1$, define
		\[M_k:=D\cdot g(m_k)=D\cdot N_{K/\Q}(m_k-\beta)\in\Z.\]
		By \eqref{vexpgrowg} and the fact that $|m_k-\beta|_w$ is uniformly bounded for all places $w\vert p$ (since $m_k\in\Z$), it is easy to see that there exist an integer $k_2\geq k_1$ and a number $C_2\in(C_1^{[K_v:\Q_p]},1)$ such that for every integer $k\geq k_2$, we have $M_k\neq0$ and
		\begin{equation}\label{pexpgrow1}
			0<\left|M_k\right|_p=\left|D\right|_p\cdot\prod_{w\vert p}\left|m_k-\beta\right|_w^{[K_w:\Q_p]}\leq C_2^{n_k}.
		\end{equation}
		For every integer $k\geq k_2$, the product formula over $\Q$ gives
		\begin{equation}\label{aexpgrow1g}
			\left|M_k\right|\geq \left|M_k\right|_p^{-1}\geq C_2^{-n_k}.
		\end{equation}
		
		Since $D\cdot g$ is a polynomial of $l$, by \eqref{mkinfg} there exist an integer $k_3\geq k_2$ and a number $B_1\in\R_{>0}$ such that for every integer $k\geq k_3$, we have
		\begin{equation}\label{polyupg}
			\left|M_k\right|=\left|D\cdot g(m_k)\right|\leq B_1\left|m_k\right|^l. 
		\end{equation}
		Recall that $\alpha\in\partial\D$ is a pole of $R$. Let $\kappa\geq1$ be the multiplicity of the pole $\alpha$. As $k\to\infty$, since $\eta^{n_k}\to\alpha$, we have the asymptotic behavior
		\[\left|m_k\right|=\left|R(\eta^{n_k})\right|\asymp\left|\eta^{n_k}-\alpha\right|^{-\kappa}.\]
		By this asymptotic relation, \eqref{aexpgrow1g}, and \eqref{polyupg}, there exist an integer $k_4\geq k_3$, a number $B_2\in\R_{>0}$, and a number $C_3\in(C_2^{1/(\kappa l)},1)$ such that for every integer $k\geq k_4$, we have
		\begin{equation}\label{aexpgrowg}
			0<\left|\eta^{n_k}-\alpha\right|\leq B_2\left|m_k\right|^{-\frac{1}{\kappa}}\leq B_2B_1^{\frac{1}{\kappa l}}\left|M_k\right|^{-\frac{1}{\kappa l}}\leq B_2B_1^{\frac{1}{\kappa l}}C_2^{\frac{n_k}{\kappa l}}\leq C_3^{n_k}.
		\end{equation}
		
		\smallskip
		
		On the other hand, by the multiplicative form of Baker's theorem on linear forms in logarithms \cite{Baker} (see \cite[Theorem~1.11]{mulBaker}), there exist an integer $k_5\geq k_4$ and a number $\delta\in\R_{>0}$ such that for every integer $k\geq k_5$,
		\begin{equation}\label{Bakg}
			\left|\eta^{n_k}-\alpha\right|\geq n_k^{-\delta}.
		\end{equation}
		When $k\gg 1$, \eqref{Bakg} clearly contradicts \eqref{aexpgrowg}. Therefore, we conclude that $\eta$ must be a root of unity, i.e., $\eta\in U(\C)$.
	\end{proof}
	\begin{Rem}\label{Siegelgen}
		We are informed by Professor Junyi Xie that Proposition~\ref{infUCg} can be easily proved by applying Siegel's theorem on integral points of curves over rings of finite type over $\Z$ (cf.~\cite[Corollary~4.11]{Vojta21}) to the curve $y=R(x)$ in $\G_m\times\A^1$, where we view $\G_{m,\Q}=\Spec(\Q[x_1,x_2]/(x_1x_2-1))\subseteq\A^2_\Q$. We choose to keep the original proof of Proposition~\ref{infUCintro} to present a different approach.
		
		Moreover, one can also obtain generalizations of Proposition~\ref{infUCg}. For example, one can replace $\gamma\Z$ with an arbitrary discrete lattice of rank two
		\[\omega_1\Z+\omega_2\Z,\]
		where $\omega_1,\omega_2\in\C$ are linearly independent over $\R$, by applying Siegel's theorem, or modifying the original proof with standard specialization arguments.
	\end{Rem}
	
	\begin{proof}[Proof of Theorem~\ref{deg111}]
		Let $f,g,c\in\C[z]$ be polynomials of degree $1$. Write
		\[f(z)=A_1z+B_1,\quad g(z)=A_2z+B_2,\quad\text{and}\quad c(z)=Cz+D,\]
		where $A_1,A_2,C\in\C^\times$ and $B_1,B_2,D\in\C$. We want to prove that $S_{f,g,c}^2$ is semi-linear.
		
		For $m\in\Z_{\geq0}$, we have
		\[f^{\circ m}(z)=A_1^mz+\beta_m,\]
		where
		\[\beta_m=B_1\frac{A_1^m-1}{A_1-1}\text{ when }A_1\neq1\quad\text{and}\quad\beta_m=B_1m\text{ when }A_1=1.\]
		For $n\in\Z_{\geq0}$, we have
		\[g^{\circ n}(z)=A_2^nz+\delta_n,\]
		where
		\[\delta_n=B_2\frac{A_2^n-1}{A_2-1}\text{ when }A_2\neq1\quad\text{and}\quad\delta_n=B_2n\text{ when }A_2=1.\]
		
		For every $(m,n)\in\Z_{\geq0}^2$, we deduce that $(m,n)\in S_{f,g,c}^2$ if and only if the following three conditions are all satisfied:
		\begin{enumerate}
			\item $A_1^m\neq C$ or $\beta_m=D$;
			\item $A_2^n\neq C$ or $\delta_n=D$;
			\item $(A_1^m-C)(\delta_n-D)=(A_2^n-C)(\beta_m-D)$.
		\end{enumerate}
		For $j\in\{1,2,3\}$, set
		\[S_j:=\left\lbrace(m,n)\in\Z_{\geq0}^2\colon\text{ the condition (}j\text{) holds}\right\rbrace.\]
		Then
		\[S_{f,g,c}^2=S_1\cap S_2\cap S_3.\]
		By Corollary~\ref{semiclosed}, it suffices to prove that $S_j$ is semi-linear for each $1\leq j\leq3$.
		
		\smallskip
		
		We first deal with the semi-linearity of $S_1$ and $S_2$. By symmetry, we only need to show the semi-linearity of $S_1$. Note that we can write
		\[S_1=\left(Z\cup W\right)\times\Z_{\geq0},\]
		where
		\[Z=\left\lbrace m\in\Z_{\geq0}\colon A_1^m\neq C\right\rbrace\quad\text{and}\quad W=\left\lbrace m\in\Z_{\geq0}\colon\beta_m=D\right\rbrace.\]
		Hence it remains to show that both $Z$ and $W$ are semi-linear by Theorem~\ref{thmGS}. It is easy to see that for all $\alpha,\gamma,\mu\in\C$, the set
		\begin{equation}\label{Psl}
			P_{\alpha,\gamma,\mu}:=\left\lbrace m\in\Z_{\geq0}\colon \alpha\mu^m=\gamma\right\rbrace\quad\text{is semi-linear};
		\end{equation}
		in fact, either $\# P_{\alpha,\gamma,\mu}\leq1$, or $P_{\alpha,\gamma,\mu}$ is an infinite arithmetic progression. (Here we set $0^0=1$.) By \eqref{Psl} and Corollary~\ref{semiclosed}, the set
		\[Z=\Z_{\geq0}\setminus P_{1,C,A_1}\]
		is semi-linear. If $A_1=1$ and $B_1=0$, then we have $W=\emptyset$ when $D\neq0$, and $W=\Z_{\geq0}$ when $D=0$. If $A_1=1$ and $B_1\neq0$, then $W=\left\lbrace D/B_1\right\rbrace\cap\Z_{\geq0}$ which has cardinality $\leq1$. We have shown that $W$ is semi-linear if $A_1=1$. If $A_1\neq1$, then $W$ is semi-linear by \eqref{Psl}. Thus, both $Z$ and $W$ are semi-linear, and so is $S_1$.
		
		\smallskip
		
		Now we consider the semi-linearity of $S_3$. The analysis for $S_3$ is divided into three cases.
		
		\smallskip
		
		\textbf{Case 1.} Suppose that $A_1=A_2=1$.
		
		If $C=1$, then we have $S_3=\Z_{\geq0}^2$ which is semi-linear. Suppose now that $C\neq1$. Then
		\[S_3=\left\lbrace (m,n)\in\Z_{\geq0}^2\colon mB_1=nB_2\right\rbrace.\]
		
		If $B_1=0$, then $S_3=\Z_{\geq0}\times\{0\}$ when $B_2\neq0$, and $S_3=\Z_{\geq0}^2$ when $B_2=0$. We see that $S_3$ is semi-linear if $B_1=0$. Similarly, $S_3$ is semi-linear if $B_2=0$.
		
		Suppose now that $B_1\neq0$ and $B_2\neq0$. If $B_2/B_1\notin\Q_{>0}$, then it is easy to see that $S_3=\left\lbrace(0,0)\right\rbrace$ is a singleton. Suppose now that $B_2/B_1\in\Q_{>0}$, and write
		\[\frac{B_2}{B_1}=\frac{p}{q},\]
		where $p$ and $q$ are coprime positive integers. Then
		\[S_3=\left\lbrace(m,n)\in\Z_{\geq0}^2\colon qm=pn\right\rbrace,\]
		which is definable in the Presburger arithmetic $\Pr$, and hence semi-linear by Theorem~\ref{thmGS}.
		
		\smallskip
		
		\textbf{Case 2.} Suppose that exactly one of $A_1$ and $A_2$ is equal to $1$.
		
		Without loss of generality, we may assume that $A_1=1$ and $A_2\neq1$. Set
		\[E=A_2D-B_2C+B_2-D\in\C.\]
		
		Suppose first that $B_1=0$.
		One computes that
		\[S_3=P_{E,E,A_2}\]
		is semi-linear by \eqref{Psl}.
		
		Suppose now that $B_1\neq0$. Note that the condition (3) for $(m,n)\in\Z_{\geq0}^2$ is equivalent to:
		\[mB_1(A_2^n-C)=D(A_2^n-C)+(1-C)(\delta_n-D).\]
		Define
		\[R(z)=\frac{E(z-1)}{B_1(A_2-1)(z-C)}\in\C(z).\]
		We can decompose $S_3$ as
		\[S_3=S_{31}\sqcup S_{32},\]
		where
		\[S_{31}=\left\lbrace(m,n)\in\Z_{\geq0}^2\colon A_2^n=C\text{ and }(1-C)(\delta_n-D)=0\right\rbrace\]
		and
		\[S_{32}=\left\lbrace(m,n)\in\Z_{\geq0}^2\colon A_2^n\neq C\text{ and }m=R(A_2^n)\right\rbrace.\]
		By \eqref{Psl} and Theorem~\ref{thmGS}, we deduce that
		\[S_{31}=P_{1,C,A_2}\cap P_{B_2(1-C),(1-C)(A_2D+B_2-D),A_2}\]
		is semi-linear. Observe that $S_{32}=S_{33}\setminus P_{1,C,A_2}$, where
		\[S_{33}:=\left\lbrace(m,n)\in\Z_{\geq0}^2\colon m=R(A_2^n)\right\rbrace.\]
		Since both $S_{31}$ and $P_{1,C,A_2}$ are semi-linear, it remains to show that $S_{33}$ is semi-linear. If $R\in\C$ is a constant, then
		\[S_{33}=\left(\left\lbrace R\right\rbrace\cap\Z_{\geq0}\right)\times\Z_{\geq0}\]
		which is semi-linear. We may assume now that $\deg(R)=1$. We consider the following two cases (2.1) and (2.2) to show the semi-linearity of $S_{33}$.
		
		\textbf{(2.1)} Suppose that $|A_2|\neq1$.
		
		Define
		\[R_\infty:=\frac{E}{B_1(A_2-1)}\in\C\text{ if }\left|A_2\right|>1,\quad\text{and}\quad R_\infty:=\frac{E}{CB_1(A_2-1)}\in\C\text{ if }\left|A_2\right|<1.\]
		Hence we have
		\[\lim_{n\to\infty}R(A_2^n)=R_\infty\in\C.\]
		Note that $R(A_2^n)\neq R_{\infty}$ for all $n\in\Z_{\geq0}$ because $\deg(R)=1$.
		We conclude that
		\[\#\left\lbrace n\in\Z_{\geq0}\colon R(A_2^n)\in\Z_{\geq0}\right\rbrace<\infty.\]
		Therefore, $S_{33}$ is also finite, hence semi-linear.
		
		\textbf{(2.2)} Suppose that $|A_2|=1$.
		
		If
		\[\#\left\lbrace n\in\Z_{\geq0}\colon R(A_2^n)\in\Z_{\geq0}\right\rbrace<\infty,\]
		then $S_{33}$ is finite and hence semi-linear. We may assume now that
		\[\#\left\lbrace n\in\Z_{\geq0}\colon R(A_2^n)\in\Z_{\geq0}\right\rbrace=\infty.\]
		A special case of Proposition~\ref{infUCg} gives
		\[A_2\in U(\C)\setminus\{1\}.\]
		Let $s\in\Z_{\geq2}$ be the order of $A_2$. Then
		\[S_{33}=\bigsqcup_{j=0}^{s-1}\left(\left(\left\lbrace R(A_2^j)\right\rbrace\cap\Z_{\geq0}\right)\times\left(j+s\Z_{\geq0}\right)\right),\]
		which is semi-linear.
		
		\smallskip
		
		\textbf{Case 3.} Suppose that $A_1\neq1$ and $A_2\neq1$.
		
		Define
		\[U:=\frac{B_1}{A_1-1}\quad\text{and}\quad V:=\frac{B_2}{A_2-1}.\]
		Define $Z$ to be the closed subscheme of the algebraic torus $\G_{m,\C}^2$ given by the equation
		\[(U-V)xy+(D+V-CU)x-(D+U-CV)y+C(U-V)=0,\]
		where $(x,y)$ are the coordinates on $\G_{m,\C}^2$. For every $(m,n)\in\Z_{\geq0}^2$, we have
		\[(m,n)\in S_3\iff (A_1^m,A_2^n)\in Z(\C).\]
		
		Let $\Gamma$ be the subgroup of $\G_m^2(\C)$ generated by $\left\lbrace(A_1,1),(1,A_2)\right\rbrace$, which is of finite rank. By the Mordell--Lang conjecture on $\G_{m,\C}^N$ proved by Laurent \cite{Laurent}, we conclude that
		\[Z(\C)\cap\Gamma=\bigcup_{j=1}^{k}g_j\cdot H_j(\C),\]
		where $k\in\Z_{\geq0}$, and for each $1\leq j\leq k$, $H_j$ is an irreducible algebraic subgroup of $\G_m^2$ and $g_j\in\G_m^2(\C)$. For $1\leq j\leq k$, define
		\[S_{3j}=\left\lbrace(m,n)\in\Z_{\geq0}^2\colon (A_1^m,A_2^n)\in g_j\cdot H_j(\C)\right\rbrace.\]
		Then
		\[S_3=\bigcup_{j=1}^k S_{3j}.\]
		It suffices to prove that $S_{3j}$ is semi-linear for all $1\leq j\leq k$.
		
		Let $j\in\{1,\dots,k\}$ be arbitrary. We may assume that $S_{3j}\neq\emptyset$. Define
		\[S_j^{\mathrm{full}}=\left\lbrace (m,n)\in\Z^2\colon (A_1^m,A_2^n)\in H_j(\C)\right\rbrace\]
		and
		\[S_j^{\mathrm{full},t}=\left\lbrace (m,n)\in\Z^2\colon (A_1^m,A_2^n)\in g_j\cdot H_j(\C)\right\rbrace.\]
		Take $(m_1,n_1)\in S_{3j}\subseteq S_j^{\mathrm{full},t}$. It is clear that
		\begin{equation}\label{fullt}
			S_j^{\mathrm{full},t}=S_j^{\mathrm{full}}+(m_1,n_1).
		\end{equation}
		Note that $S_j^{\mathrm{full}}$ is a subgroup of $(\Z^2,+)$, hence a free abelian group of rank $\leq2$. Set $r=\mathrm{rank}\left(S_j^{\mathrm{full}}\right)\in\{0,1,2\}$. Then we can write
		\begin{equation}\label{full}
			S_j^{\mathrm{full}}=\left\lbrace \sum_{i=1}^r a_iv_i\colon a_i\in\Z\text{ for }1\leq i\leq r\right\rbrace
		\end{equation}
		for some $\Q$-linearly independent vectors $(v_i)_{1\leq i\leq r}\subseteq \Z^2$. Here $S_j^{\mathrm{full}}=\left\lbrace(0,0)\right\rbrace$ if $r=0$. The descriptions \eqref{full} and \eqref{fullt} show that both $S_j^{\mathrm{full}}$ and $S_j^{\mathrm{full},t}$ are definable in the structure $(\Z,+,0,1)$. It follows routinely that
		\[S_{3j}=S_j^{\mathrm{full},t}\cap\Z_{\geq0}^2\]
		is definable in $\Pr=\left(\Z_{\geq0},+,<,0,1,(P_n)_{n\in\Z_{\geq2}}\right)$. Thus, $S_{3j}$ is semi-linear by Theorem~\ref{thmGS}.
		
		\medskip
		
		The proof is complete.
	\end{proof}
	
	\begin{Rem}
		In fact, (a small modification of) the proof of Theorem~\ref{deg111} shows that for all polynomials $f,g,c\in\C[z]$ of degree $1$, the \emph{full rank-two recurrence set}
		\[S_{f,g,c}^{2,\mathrm{full}}:=\left\lbrace(m,n)\in\Z^2\colon\exists\la\in\C,f^{\circ m}(\la)=g^{\circ n}(\la)=c(\la)\right\rbrace\]
		is definable in the structure $(\Z,+,0,1)$.
	\end{Rem}
	
	\subsection{Rank-one recurrence for rational maps}\label{secratpf}
	\begin{proof}[Proof of Theorem~\ref{thmrat}]
		Let $f,g,c\in\C(z)$ be rational maps such that $\deg(f)\geq2$ and $\deg(g)\geq2$.
		
		By Corollary~\ref{rat1to2}, it suffices to prove (\ref{rat1}) and (\ref{rat3}) of Theorem~\ref{thmrat}.
		
		\medskip
		
		\textbf{(1)} Suppose that $\deg(f)=\deg(g)$ and
		\begin{equation}\label{lmfg}
			f^{\circ (l+k)}=f^{\circ l}\circ g^{\circ k}
		\end{equation}
		for some integers $l\geq0$ and $k\geq1$.
		
		We may assume that $(l,k)\in\Z_{\geq0}\times\Z_{>0}$ is minimal in the sense that for all integers $l^\prime\geq0$ and $k^\prime\geq1$, if $l^\prime<l$, or $l^\prime=l$ and $k^\prime<k$, then
		\[f^{\circ (l^\prime+k^\prime)}\neq f^{\circ l^\prime}\circ g^{\circ k^\prime}.\]
		
		For $j\in\{l,l+1,\dots,l+k-1\}$, define
		\[S_j=(j+k\Z_{\geq0})\cap\hat{S}_{f,g,c},\quad\text{where}\quad j+k\Z_{\geq0}=\left\lbrace j+km\colon m\in\Z_{\geq0}\right\rbrace.\]
		Set $S_s:=\left[0,l\right)\cap\hat{S}_{f,g,c}$, which is finite, hence semi-linear. Then we obtain the decomposition
		\[\hat{S}_{f,g,c}=S_s\sqcup\bigsqcup_{l\leq j\leq l+k-1}S_j.\]
		Thus, it suffices to show that $S_j$ is semi-linear for $l\leq j\leq l+k-1$.
		
		Fix an integer $l\leq j\leq l+k-1$. We consider the following two cases.
		
		\smallskip
		
		(i) Suppose that $j=0$. Then $l=0$ and $f^{\circ k}=g^{\circ k}$. Let $m\in\Z_{\geq0}$. If
		\[\deg(g)^{km}\neq\deg(c),\]
		then the equation
		\[f^{\circ km}(\la)=g^{\circ km}(\la)=c(\la)\quad(\la\in\P^1(\C))\]
		always has a solution in $\P^1(\C)$ because $f^{\circ km}=g^{\circ km}$. We conclude that
		\[\#\left(k\Z_{\geq0}\setminus S_0\right)\leq1;\]
		hence, $S_0\subseteq k\Z_{\geq0}$ must be semi-linear.
		
		\smallskip
		
		(ii) Suppose that $j>0$. The minimality of $(l,k)$ implies that $f^{\circ j}\neq g^{\circ j}$ in $\C(z)$. Define 
		\[J_j=\left\lbrace e\in\P^1(\C)\colon f^{\circ j}(e)=g^{\circ j}(e)\right\rbrace,\]
		which is a non-empty finite set. For $m\in\Z_{\geq0}$, by \eqref{lmfg} and the assumption that $j\geq l$, we obtain
		\begin{align*}
			&j+km\in S_j\text{, i.e., }\exists\la\in\P^1(\C),\,f^{\circ (j+km)}(\la)=g^{\circ (j+km)}(\la)=c(\la)\\
			\iff&\exists\la\in\P^1(\C),\,f^{\circ j}(g^{\circ km}(\la))=g^{\circ j}(g^{\circ km}(\la))=c(\la)\\
			\iff&\exists e\in J_j,\,\exists\la\in\P^1(\C),\,g^{\circ km}(\la)=e\text{ and }g^{\circ j}(e)=c(\la).
		\end{align*}
		We deal with two subcases as follows.
		
		\smallskip
		
		(ii.i) Suppose that $j>0$ and $c\in\C$. Set
		\[J_j^\prime=\left\lbrace e\in J_j\colon g^{\circ j}(e)=c\right\rbrace,\]
		which is a finite set (it may be empty). Note that for every $m\in\Z_{\geq0}$ and $e\in J_j^\prime$, the equation \[g^{\circ km}(\la)=e\quad(\la\in\P^1(\C))\]
		always has a solution in $\P^1(\C)$. Therefore, $S_j=j+k\Z_{\geq0}$ is an arithmetic progression if $J_j^\prime\neq\emptyset$, and $S_j=\emptyset$ if $J_j^\prime=\emptyset$. In both cases, $S_j$ is semi-linear.
		
		\smallskip
		
		(ii.ii) Suppose that $j>0$ and $\deg(c)\geq1$. For every $e\in J_j$, define
		\[T_j^e=\left\lbrace \la\in\P^1(\C)\colon c(\la)=g^{\circ j}(e)\right\rbrace,\]
		which is a non-empty finite set. Let $e\in J_j$ and $\la\in T_j^e$. Set
		\[S_{j,e,\la}=\left\lbrace j+km\colon m\in\Z_{\geq0}\text{ such that }g^{\circ km}(\la)=e\right\rbrace.\]
		If $e\notin O_{g^{\circ k}}(\la)$, then $S_{j,e,\la}=\emptyset$. If $e\in O_{g^{\circ k}}(\la)$, then it is easy to see that $S_{j,e,\la}$ is an arithmetic progression by analyzing the $g^{\circ k}$-preperiodicity of $\la$. In both cases, $S_{j,e,\la}$ is semi-linear. Therefore, we conclude that
		\[S_j=\bigcup_{e\in J_j}\bigcup_{\la\in T_j^e} S_{j,e,\la}\]
		is also semi-linear.
		
		\smallskip
		
		According to the above case-by-case analysis, $S_j$ is semi-linear for $l\leq j\leq l+k-1$, and the proof of Theorem~\ref{thmrat}~(\ref{rat1}) is finished.
		
		\medskip 
		
		\textbf{(3)} Suppose that $c\in\C$ and $f$ is non-exceptional.
		
		Set $d_1=\deg(f)\geq2$ and $d_2=\deg(g)\geq2$. By Theorem~\ref{thmrat}~(\ref{rat2}), we may assume that $d_1\neq d_2$. Without loss of generality, we may assume that $d_1>d_2$.
		
		By Proposition~\ref{Qratfinite}, we may assume that $\#\hat{Q}_{f,g,c}=\infty$. By Proposition~\ref{ratmusame}, we have
		\[P:=\PrePer(f,\C)=\PrePer(g,\C),\quad\mu:=\mu_f=\mu_g,\quad\text{and}\quad J:=J(f)=J(g).\]
		Then $g$ is also non-exceptional by \cite{Zdunik90} because $\mu_g=\mu_f$. Applying Theorem~\ref{ratsamemu} to $(f,g)$, we obtain
		\begin{equation}\label{fglk}
			f^{\circ 2l}=f^{\circ l}\circ g^{\circ k}
		\end{equation}
		for some integers $l,k\geq1$. Comparing the degrees gives $d_1^l=d_2^k$, we obtain $l<k$ because $d_1>d_2$. Set
		\[N_0:=\left\lceil \frac{2lk}{k-l}\right\rceil,\]
		which is an integer $\geq2$. 
		
		For $j\in\{0,1,\dots,k-1\}$, define
		\[S_j=(j+k\Z_{\geq0})\cap\left[N_0,\infty\right)\cap\hat{S}_{f,g,c}.\]
		Set $S_s:=\left[0,N_0\right)\cap\hat{S}_{f,g,c}$, which is finite, hence semi-linear. Then we obtain a decomposition
		\[\hat{S}_{f,g,c}=S_s\sqcup\bigsqcup_{0\leq j\leq k-1}S_j.\]
		Thus, it suffices to show that $S_j$ is semi-linear for $0\leq j\leq k-1$.
		
		Let $j\in\{0,1,\dots,k-1\}$. Let $i=i(j)\in\{0,1,\dots,k-1\}$ be such that $k\mid i+j$. Define $c_j=g^{\circ i}(c)\in\P^1(\C)$. Let $n\in(j+k\Z_{\geq0})\cap\left[N_0,\infty\right)$. Set $q=\lceil n/k\rceil$. Then $i=kq-n$ and $q=(n+i)/k$. Note that we have
		\[n-lq=-i+\frac{(k-l)(n+i)}{k}\geq l\]
		because $n\geq N_0\geq 2lk/(k-l)$. Applying \eqref{fglk} $q$ times, we obtain
		\begin{equation}\label{useq}
			f^{\circ n}=f^{\circ (n-lq)}\circ g^{\circ kq}=f^{\circ (n-lq)}\circ g^{\circ (kq-n)}\circ g^{\circ n}=f^{\circ (-i+(k-l)(n+i)/k)}\circ g^{\circ i}\circ g^{\circ n}.
		\end{equation}
		By \eqref{useq}, we obtain
		\begin{align*}
			&n\in S_j\text{, i.e. ,}\exists\la\in\P^1(\C),\,f^{\circ n}(\la)=g^{\circ n}(\la)=c\\
			\iff&\exists\la\in\P^1(\C),\,f^{\circ (-i+(k-l)(n+i)/k)}\circ g^{\circ i}\circ g^{\circ n}(\la)=g^{\circ n}(\la)=c\\
			\iff&\exists\la\in\P^1(\C),\,g^{\circ n}(\la)=c\text{ and }f^{\circ (-i+(k-l)(n+i)/k)}(g^{\circ i}(c))=c\\
			\iff&\exists\la\in\P^1(\C),\,g^{\circ n}(\la)=c\text{ and }f^{\circ (-i+(k-l)(n+i)/k)}(c_j)=c\\
			\iff&f^{\circ (-i+(k-l)(n+i)/k)}(c_j)=c,
		\end{align*}
		where the last equivalence follows from the fact that the equation $g^{\circ n}(\la)=c$ ($\la\in\P^1(\C)$) always has a solution in $\P^1(\C)$. By analyzing whether $c\in O_f(c_j)$ and the preperiodicity of $c_j$ under $f$, we see that
		\[B_j:=\left\lbrace m\in\Z_{\geq0}\colon f^{\circ m}(c_j)=c\right\rbrace\]
		is semi-linear (in fact, $B_j=\emptyset$ or it is an arithmetic progression). Hence $B_j$ is definable in the Presburger arithmetic $\Pr$ by Theorem~\ref{thmGS}. Note that
		\[-i+\frac{(k-l)(n+i)}{k}\in B_j\iff(k-l)(n+i)\in k(B_j+i)=\left\lbrace k(m+i)\colon m\in B_j\right\rbrace\]
		Thus,
		\[S_j=\{n\in\Z_{\geq0}\colon k\mid(n+i),\,n\geq N_0,\,\text{and}\,(k-l)(n+i)\in k(B_j+i)\}\]
		is also definable in the Presburger arithmetic $\Pr$ because $k,i,N_0,k-l$ are all fixed non-negative integers.
		By Theorem~\ref{thmGS} again, we conclude that $S_j$ is semi-linear.
		
		This completes the proof of Theorem~\ref{thmrat}~(\ref{rat3}).
	\end{proof}
	
	\subsection*{Acknowledgement}
	The author would like to thank She Yang and Professor Junyi Xie for useful discussions.


\begin{thebibliography}{BHMV94}
		\bibitem[Bak75]{Baker}
		Alan~Baker.
		\newblock A sharpening of the bounds for linear forms in logarithms \Rom{3}.
		\newblock {\em Acta Arith.}, 27:247--252, 1975.
		
		\bibitem[Bak09]{Bak09}
		Matthew~Baker.
		\newblock A finiteness theorem for canonical heights attached to rational maps over function fields.
		\newblock {\em J. Reine Angew. Math.}, 626:205--233, 2009.
		
		\bibitem[BD11]{BD11}
		Matthew~Baker and Laura~{D}e{M}arco.
		\newblock Preperiodic points and unlikely intersections.
		\newblock {\em Duke Math. J.}, 159(1):1--29, 2011.
		
		\bibitem[BGT16]{DMLbook}
		Jason~P.~Bell, Dragos~Ghioca, and Thomas~J.~Tucker.
		\newblock {\em The dynamical {M}ordell--{L}ang conjecture}, {\em Math. Surveys Monogr.}, vol.~210.
		\newblock Amer. Math. Soc., Providence, RI, 2016.
		
		\bibitem[BHPT24]{Tits}
		Jason~P.~Bell, Keping~Huang, Wayne~Peng, and Thomas~J.~Tucker.
		\newblock A Tits alternative for endomorphisms of the projective line.
		\newblock {\em J. Eur. Math. Soc. (JEMS)}, 26(12):4903--4922, 2024.
		
		\bibitem[BG06]{HtBook}
		Enrico Bombieri and Walter Gubler.
		\newblock {\em Heights in {D}iophantine geometry}, {\em New Math. Monogr.}, vol.~4.
		\newblock Cambridge Univ. Press, Cambridge, 2006.
		
		\bibitem[BHMV94]{Buchi}
		V{\'e}ronique~Bruy{\`e}re, Georges~Hansel, Christian~Michaux, and Roger Villemaire.
		\newblock Logic and {$p$}-recognizable sets of integers.
		\newblock {\em Bull. Belg. Math. Soc. Simon Stevin}, 1(2):191--238, 1994.
		
		\bibitem[Bug18]{mulBaker}
		Yann~Bugeaud.
		\newblock {\em Linear forms in logarithms and applications}, {\em IRMA Lect. Math. Theor. Phys.}, vol.~28.
		\newblock Eur. Math. Soc., Z{\"u}rich, 2018.
		
		\bibitem[BCG03]{BCGgcd}
		Yann~Bugeaud, Pietro~Corvaja, and Umberto~Zannier.
		\newblock An upper bound for the {G.C.D.} of {$a^n-1$} and {$b^n-1$}.
		\newblock {\em Math. Z.}, 243(1):79--84, 2003.
		
		\bibitem[CS93]{CallSilverman}
		Gregory~S.~Call and Joseph~H.~Silverman.
		\newblock Canonical heights on varieties with morphisms.
		\newblock {\em Compos. Math.}, 89:163--205, 1993.
		
		\bibitem[FG22]{FG22}
		Charles~Favre and Thomas~Gauthier.
		\newblock {\em The arithmetic of polynomial dynamical pairs}, {\em Ann. of Math. Stud.}, vol.~214.
		\newblock Princeton Univ. Press, Princeton, NJ, 2022.
		
		\bibitem[FLM83]{FLM83}
		Alexandre~Freire, Artur~Lopes, and Ricardo~Ma{\~{n}}{\'e}.
		\newblock An invariant measure for rational maps.
		\newblock {\em Bol. Soc. Bras. Mat.}, 14:45--62, 1983.
		
		\bibitem[GNY19]{GNY19}
		Dragos~Ghioca, Khoa~D.~Nguyen, and Hexi~Ye.
		\newblock The dynamical {M}anin--{M}umford conjecture and the dynamical {B}ogomolov conjecture for split rational maps.
		\newblock {\em J. Eur. Math. Soc. (JEMS)}, 21(5):1571--1594, 2019.
		
		\bibitem[GT09]{GT09}
		Dragos~Ghioca and Thomas~J.~Tucker.
		\newblock Periodic points, linearizing maps, and the dynamical {M}ordell--{L}ang problem.
		\newblock {\em J. Number Theory}, 129(6):1392--1403, 2009.
		
		\bibitem[GS66]{GS66}
		Seymour~Ginsburg and Edwin~H.~Spanier.
		\newblock Semigroups, Presburger formulas, and languages.
		\newblock {\em Pacific J. Math.}, 16(2):285--296, 1966.
		
		\bibitem[HT17]{HT17}
		Liang-Chung~Hsia and Thomas~J.~Tucker.
		\newblock Greatest common divisors of iterates of polynomials.
		\newblock {\em Algebra Number Theory}, 11(6):1437--1459, 2017.
		
		\bibitem[Lau84]{Laurent}
		Michel Laurent.
		\newblock Equations diophantiennes exponentielles.
		\newblock {\em Invent. Math.}, 78(2):299--327, 1984.
		
		\bibitem[LP97]{LP97}
		Genadi~Levin and Feliks~Przytycki.
		\newblock When do two rational functions have the same Julia set?
		\newblock {\em Proc. Amer. Math. Soc.}, 125(7):2179--2190, 1997.
		
		\bibitem[Lyu83]{Lyubich83}
		Mikhail~Ju.~Lyubich.
		\newblock Entropy properties of rational endomorphisms of the {R}iemann sphere.
		\newblock {\em Ergodic Theory Dynam. Systems}, 3:351--385, 1983.
		
		\bibitem[Ma{\~{n}}83]{Mane83}
		Ricardo~Ma{\~{n}}{\'e}.
		\newblock On the uniqueness of the maximizing measure for rational maps.
		\newblock {\em Bol. Soc. Bras. Mat.}, 14:27--43, 1983. 
		
		\bibitem[Mar02]{217}
		David~Marker.
		\newblock {\em Model Theory: An Introduction}, {\em Grad. Texts in Math.}, vol.~217.
		\newblock Springer, New York, NY, 2002.
		
		\bibitem[Mil06]{milnor2006lattes}
		John Milnor.
		\newblock On {L}att{\`e}s maps.
		\newblock In {\em Dynamics on the {Riemann} sphere: {A} {Bodil} {Branner} {Festschrift}}, pages 9--43. Eur. Math. Soc., Z{\"u}rich, 2006.
		
		\bibitem[NZ25]{NZ25}
		Chatchai~Noytaptim and Xiao~Zhong.
		\newblock A finiteness result for common zeros of iterates of rational functions.
		\newblock {\em Int. Math. Res. Not. IMRN}, 2025(11):1--29, 2025.
		
		\bibitem[Pak20]{Paksamemu}
		Fedor~Pakovich.
		\newblock On rational functions sharing the measure of maximal entropy.
		\newblock {\em Arnold Math. J.}, 6:387--396, 2020.
		
		\bibitem[Par11]{LTE}
		Amir~Hossein~Parvardi.
		\newblock Lifting the exponent lemma ({LTE}).
		\newblock 2011. Available at \url{https://pregatirematematicaolimpiadejuniori.wordpress.com/wp-content/uploads/2016/07/lte.pdf}.
		
		\bibitem[SS95]{SS95}
		Walter~Schmidt and Norbert~Steinmetz.
		\newblock The polynomials associated with a {J}ulia set.
		\newblock {\em Bull. Lond. Math. Soc.}, 27(3):239--341, 1995.
		
		\bibitem[Sil07]{Silverman2007}
		Joseph~H.~Silverman.
		\newblock {\em The arithmetic of dynamical systems}, {\em Grad. Texts in Math.}, vol.~241.
		\newblock Springer, New York, NY, 2007.
		
		\bibitem[Voj21]{Vojta21}
		Paul~Vojta.
		\newblock Roth's theorem over arithmetic function fields.
		\newblock {\em Algebra Number Theory}, 15(8):1943--2017, 2021.
		
		\bibitem[Xie23]{xDML}
		Junyi~Xie.
		\newblock Around the dynamical {M}ordell--{L}ang conjecture.
		\newblock arXiv:2307.05885v2, 2023. To appear in {\em Algebraic, Complex, and Arithmetic Dynamics}, {\em Simons Symp.}, Springer, Cham, 2026.
		
		\bibitem[YZ26]{YZ26}
		She~Yang and Xiao~Zhong.
		\newblock Dynamical {GCD} problems and a variant of the dynamical {M}ordell--{L}ang conjecture.
		\newblock arXiv:2602.18302, 2026.
		
		\bibitem[Ye15]{Ye}
		Hexi~Ye.
		\newblock Rational functions with identical measure of maximal entropy.
		\newblock {\em Adv. Math.}, 268:373--395, 2015.
		
		\bibitem[YZ21]{YZ21}
		Xinyi~Yuan and Shou-Wu~Zhang.
		\newblock The arithmetic {H}odge index theorem for adelic line bundles \Rom{2}: finitely generated fields.
		\newblock arXiv:1304.3539v2, 2021.
		
		\bibitem[Zan12]{UnlikelyBook}
		Umberto~Zannier.
		\newblock {\em Some problems of unlikely intersections in arithmetic and geometry}, {\em Ann. of Math. Stud.}, vol. 181.
		\newblock Princeton Univ. Press, Princeton, NJ, 2012.
		\newblock With appendices by David~Masser.
		
		\bibitem[Zdu90]{Zdunik90}
		Anna~Zdunik.
		\newblock Parabolic orbifolds and the dimension of the maximal measure for rational maps.
		\newblock {\em Invent. Math.}, 99(3):627--649, 1990.
	\end{thebibliography}
\end{document}